\theoremstyle{plain}
\newtheorem{theorem}{Theorem}[section]
\newtheorem{thm}[theorem]{Theorem}
\newtheorem{lemma}[theorem]{Lemma}
\newtheorem{corollary}[theorem]{Corollary}
\newtheorem{proposition}[theorem]{Proposition}
\newtheorem{prop-defn}[theorem]{Proposition-Definition}
\newtheorem{claim}{Claim}
\newtheorem*{claim*}{Claim}
\newtheorem{subclaim}{Subclaim}
\newtheorem*{theorem:main}{Main Theorem}
\newtheorem{thmint}{Theorem}
\theoremstyle{definition} 
\newtheorem{defn}[theorem]{Definition}
\newtheorem{definition}[theorem]{Definition}
\newtheorem{remark}[theorem]{Remark}
\newtheorem{question}[theorem]{Question}
\newtheorem{problem}[theorem]{Problem}
\newtheorem*{remark*}{Remark}
\newtheorem*{remarks*}{Remarks}
\newtheorem{example}[theorem]{Example}
\newtheorem{assumptions}[theorem]{Assumption}
\newtheorem{notation}[theorem]{Notation}
\newtheorem*{constants}{Constants}
\newtheorem*{constantsgauges}{Constants and Morse gauges}
\newcommand{\nest}{\sqsubseteq}
\newcommand{\propnest}{\sqsubsetneq}
\newcommand*{\pnest}{\propnest}
\newcommand{\orth}{\bot}
\newcommand{\transverse}{\pitchfork}
\newcommand*{\trans}{\transverse}
\newcommand{\gate}{\mathfrak g}
\newcommand*{\g}{\gate}
\newcommand*{\cal}{\mathcal}
\newcommand*{\C}{\mathcal{C}}
\newcommand*{\s}{\mathfrak{S}}
\newcommand*{\U}{\mathcal{U}}
\newcommand*{\V}{\mathcal{V}}
\newcommand*{\dist}{d}
\newcommand*{\eps}{\varepsilon}
\newcommand{\needaname}{\text{geometrically faithful\ }}
\DeclareMathOperator{\diam}{diam}
\DeclareMathOperator{\Isom}{Isom}
\DeclareMathOperator{\Rel}{Rel}
\DeclareMathOperator{\Stab}{Stab}
\definecolor{harrycomment}{rgb}{0.6,0,0.4}
\definecolor{markcomment}{rgb}{0.9,0,0.1}
\definecolor{antoinecomment}{rgb}{0.0,0.5,0.0}
\newenvironment{subproof}[1]{\smallskip\noindent\emph{#1.}}
            {\leavevmode\unskip\penalty9999\hbox{}\nobreak\hfill\quad\hbox{$\diamondsuit$}\par\medskip}
\title[Induced quasi-isometries]{Induced quasi-isometries of hyperbolic spaces, Markov chains, and acylindrical hyperbolicity}
	\author[Antoine Goldsborough]{Antoine Goldsborough}
	\address{Maxwell Institute and Department of Mathematics, Heriot-Watt University, Edinburgh, UK}
	\email{ag2017@hw.ac.uk}
 \author{Mark Hagen}
 \address{School of Mathematics, University of Bristol, Bristol, UK}
 \email{markfhagen@posteo.net}
 \author{Harry Petyt}
 \address{Mathematical Institute, University of Oxford, UK}
 \email{petyt@maths.ox.ac.uk}
\author[Alessandro Sisto]{Alessandro Sisto}
	\address{Maxwell Institute and Department of Mathematics, Heriot-Watt University, Edinburgh, UK}
	\email{a.sisto@hw.ac.uk}
\address{Department of Mathematics \& Statistics, Swarthmore College, Swarthmore, PA, USA}
\email{jrussel2@swarthmore.edu}
\begin{document}

\maketitle
    \centerline{\textit{With an appendix by Jacob Russell}}

\begin{abstract}
    We show that quasi-isometries of (well-behaved) hierarchically hyperbolic groups descend to quasi-isometries of their maximal hyperbolic space. This has two applications, one relating to quasi-isometry invariance of acylindrical hyperbolicity, and the other a linear progress result for Markov chains.
    
    The appendix, by Jacob Russell, contains a partial converse under the (necessary) condition that the maximal hyperbolic space is one-ended.
\end{abstract}

\tableofcontents

\section{Introduction}

In this paper we consider groups $G$ acting on hyperbolic spaces $X$ such that every quasi-isometry of $G$ induces a quasi-isometry of $X$. We have two main motivations for considering this property, one relating to quasi-isometry invariance of acylindrical hyperbolicity, and the other to Markov chains. We discuss these separately below.

Examples of group actions with this induced quasi-isometry property include relatively hyperbolic groups whose peripheral subgroups are not relatively hyperbolic: the space $X$ is the coned-off Cayley graph. This can be deduced from \cite[Thm~4.1]{behrstockdrutumosher:thick}, which implies that quasi-isometries map peripheral subgroups into uniform neighbourhoods of peripheral subgroups. Results of this type originated in \cite{drutusapir:tree-graded}.

Our first result, essentially a special case of Theorem \ref{thm:qi-descend}, shows that many \emph{hierarchically hyperbolic groups} (HHGs) also fit into this framework. That is, their quasi-isometries descend to their \emph{maximal hyperbolic space}. We refer to the HHGs in question as \emph{well behaved} in this introduction---all naturally occurring HHGs are well behaved with the right choice of structure.  See Definition \ref{defn:well-behaved-HHS}.

\begin{thmint}\label{cor:HHG-QI-descend-intro}
Let $(G,\s)$ be a well-behaved HHG, with maximal hyperbolic space $\C S$, and let $\pi_S:G\to \C S$ be the associated projection. Every quasi-isometry $f:G\to G$ induces a quasi-isometry $\bar f:\C S\to \C S$ such that $\pi_S f$ and $\bar f\pi_S$ coarsely agree.
%Then $G$ acts acylindrically and coboundedly on an unbounded hyperbolic space $X$ such that, letting $\rho:G\to X$ be any orbit map, any quasi-isometry $f:G\to G$ induces a quasi-isometry $\bar f:X\to X$ such that $\bar f\rho$ and $\rho\ f$ coarsely agree.
%Moreover, if $(G,\s)$ has \emph{unbounded products}, then the above holds with $X=\C S$ and $\rho=\pi_S$, where $S\in\s$ is the unique $\nest$--maximal element and $\pi_S$ is the projection built into the HHS structure.
\end{thmint}
 
In Appendix \ref{sec:appendix}, Jacob Russell proves a partial converse to Theorem~\ref{cor:HHG-QI-descend-intro}. This is Theorem~\ref{thm:one-ende_converse}, which says that if $\C S$ is one-ended then all quasi-isometries of $\C S$ come from quasi-isometries of $G$. This generalises a result of Rafi--Schleimer \cite{rafischleimer_rigid}; see the appendix for more discussion.

Theorem~\ref{cor:HHG-QI-descend-intro} is a direct consequence of Lemma \ref{lem:making-good-behaviour} and Corollary \ref{cor:HHG-QI-descend-propaganda}, and
% \begin{proof}
% This follows immediately from Corollary \ref{cor:HHG-QI-descend-propaganda}, taking $X$ to be the maximal hyperbolic space in the $G$--HHS structure from that corollary.  This $X$ is the output of Lemma \ref{lem:making-good-behaviour}, which involves no change to $\s$ under the unbounded products assumption.
% \end{proof}
provides many examples of pairs $(G,X)$ where quasi-isometries of $G$ induce quasi-isometries of $X$. Indeed, the class of HHGs includes many: extensions and quotients of mapping class groups \cite{behrstockhagensisto:asymptotic,behrstockhagenmartinsisto:combinatorial,dowdalldurhamleiningersisto:extensions:2,russell:extensions}; cubical groups, such as special groups \cite{hagensusse:onhierarchical,chesser:stable}; 3--manifold groups \cite{hagenrussellsistospriano:equivariant}; Artin groups \cite{hagenmartinsisto:extra}; and combinations thereof \cite{berlairobbio:refined,berlynerussell:hierarchical} (in each case, either the HHG structure in the literature is already well-behaved, or it is easily modified to be so).
%as the class of HHGs includes mapping class groups, special cubical groups, and many 3-manifold groups among others, see e.g. \cite{behrstockhagensisto:hierarchically:1, behrstockhagensisto:hierarchically:2,hagenrussellsistospriano:equivariant,behrstockhagenmartinsisto:combinatorial,hagenmartinsisto:extra,russell:extensions,dowdalldurhamleiningersisto:extensions:2,chesser:stable} and is closed under various operations \cite{behrstockhagensisto:hierarchically:2,behrstockhagensisto:asymptotic,behrstockhagenmartinsisto:combinatorial,berlairobbio:refined,berlynerussell:hierarchical}.  
In the case of mapping class groups, the fact that quasi-isometries descend to the curve graph is a consequence of QI rigidity results from \cite{behrstockkleinerminskymosher:geometry,bowditch:large:mapping}, but Theorem \ref{cor:HHG-QI-descend-intro} is a significantly more direct route to the conclusion in this case.  For right-angled Artin groups, the fact that quasi-isometries descend to the contact graph of the universal cover of the Salvetti complex is in many cases a consequence of results of Huang \cite{huang:quasi:1,huang:quasi:2} showing that quasi-isometries descend to the \emph{extension graph}, along with an observation of Kim-Koberda relating this to the contact graph \cite{kimkoberda:geometry}, but it appears to be new for right-angled Artin groups where the outer automorphism group is sufficiently complicated.

% We have two motivations for considering the induced quasi-isometry property, and two applications of this property. One relates to quasi-isometry invariance of acylindrical hyperbolicity and one to Markov chains. We discuss these separately below.

\subsection*{Groups quasi-isometric to acylindrically hyperbolic groups}

It is arguably the main open question on acylindrically hyperbolic groups whether being acylindrically hyperbolic is a quasi-isometry--invariant property of groups, as asked, for example, in \cite[Problem 9.1]{dahmaniguirardelosin:hyperbolically} and \cite[Question 2.20]{osin:groups}. In fact, it is not even known whether acylindrical hyperbolicity is a commensurability invariant, and to highlight how little is known about this question, even the following are unanswered:
\begin{itemize}
    \item Let $G$ be acylindrically hyperbolic and let $H$ be of the form $G\rtimes \mathbb Z/2$. Is $H$ necessarily acylindrically hyperbolic?
\item Can a torsion group be quasi-isometric to an acylindrically hyperbolic group?
\item Let $G$ be a group quasi-isometric to $Out(F_n)$, with $n\geq 3$. Is $G$ necessarily acylindrically hyperbolic?
\end{itemize}

It is quite possible that acylindrical hyperbolicity is not a quasi-isometry, or even commensurability, invariant, making it quite interesting to obtain partial results in this direction. We isolate properties leading to such partial results in our set-up, where we consider pairs $(G,X)$ where quasi-isometries of the group $G$ induce quasi-isometries of the hyperbolic space $X$ being acted on. The existence of induced quasi-isometries in itself is not sufficient, as the hyperbolic space might be a point. However, it can be combined with some additional restrictions into a short list of assumptions from which proving the next theorem is a fairly routine matter. We state the assumptions on $(G,X)$ informally here, referring the reader to Definition \ref{defn:strong-21} for the precise version. 

\begin{enumerate}
    \item The action of $G$ on $X$ is nonelementary and acylindrical.
    \item Quasi-isometries of $G$ induce quasi-isometries of $X$.
\item (Morse detectability) A geodesic in $G$ is Morse if and only if it maps to a parametrised quasi-geodesic in $X$.
\end{enumerate}

We note that Morse detectability was abstracted in \cite{russellsprianotran:local} (inspired by \cite{abbottbehrstockdurham:largest,kentleininger:shadows}  among others) as a sufficient condition to show the Morse local-to-global property.

The following combines Corollary \ref{cor:acyl-qi} and Theorem \ref{thm:HHG-QI-acyl} (see Remark \ref{rem:HHG-weak-21}).

\begin{thmint}\label{thmint:acyl}
    Let $G$ be a group satisfying Definition \ref{defn:strong-21} (for example, let $G$ be a well-behaved HHG which is non-elementary and has unbounded maximal hyperbolic space). Then any group quasi-isometric to $G$ is acylindrically hyperbolic.
\end{thmint}

\subsection*{Application to Markov chains}
The study of Markov chains on groups as a ``quasi-isometry invariant'' generalisation of random walks was initiated in \cite{goldsboroughsisto:markov}. Given quasi-isometric groups, $G$ and $H$, and a simple random walk on $H$, there is no meaningful notion of a random walk on $G$ induced by the one on $H$. Markov chains resolve this issue for non-amenable groups since the push-forward (see Section \ref{sec:prelim} for definitions) of a tame Markov chain by a bijective quasi-isometry is again a tame Markov chain, and quasi-isometries between non-amenable groups are bounded distance from  bijective ones \cite{Whyte1999:AmenabilityBE}. 

For a group $G$ acting on a hyperbolic space $X$ with basepoint $x_0$, we say that a Markov chain (or random walk) $(w^{o}_n)_n$ in $G$ makes \textit{linear progress with exponential decay in $X$} if there is a constant $C>0$ such that for all $n$ and $o \in G$ we have $\mathbb P [ d_X(ox_0,w_n^{o}x_0) \geq n/C ] \geq Ce^{-n/C}. $ 
In the case of random walks, establishing this property was done in   \cite{mahertiozzo:random} for weakly hyperbolic groups. This property feeds into the proof of several results for random walks; such as a Central Limit Theorem for the random walk \cite{MathieuSisto}, genericity of loxodromic elements \cite{mahertiozzo:random} or that random subgroups of weakly hyperbolic groups are free \cite{taylortiozzo:random}, among many others. 

In a similar spirit, the main result of \cite{goldsboroughsisto:markov} was establishing that tame Markov chains in $G$ make \textit{linear progress} in the hyperbolic space $X$ for many groups $G$ acting on a hyperbolic space $X$. Examples of groups admitting such an action on a hyperbolic space include (non-elementary) relatively hyperbolic groups and acylindrically hyperbolic $3$-manifold groups. This enabled showing a Central Limit Theorem for random walks on groups quasi-isometric to such groups, and this property is used in \cite{goldsboroughSistorandom} to study random divergence (see below), More generally, linear progress is intended as a crucial starting point for further study of Markov chains.

One of the applications of Theorem \ref{cor:HHG-QI-descend-intro} is in establishing linear progress in the case where $G$ is a hierarchically hyperbolic group and $X$ is the maximal hyperbolic space in the HHG structure of $G$. We work with a more restrictive class of Markov chains than in \cite{goldsboroughsisto:markov}, as we require a property that we call \emph{quasi-homogeneity}, see Section \ref{sec:prelim}. Importantly, this property is satisfied by simple random walks and their push-forwards by bijective quasi-isometries. The following is Theorem \ref{thm:linear_progress} (see Remark~\ref{rem:HHG-weak-21} for the verification of Assumption~\ref{assumpt:space} for HHGs, part of which is Theorem~\ref{thm:qi-descend}). 

\begin{thmint}\label{cor:linear-progress-HHG}
Let $G$ be a group acting on a hyperbolic space $X$ and satisfying Assumption~\ref{assumpt:space} (for example, let $G$ be a well-behaved HHG which is non-elementary and has unbounded maximal hyperbolic space $\mathcal CS$). Then any tame, quasi-homogeneous Markov chain on $G$ makes linear progress with exponential decay in $X$.
\end{thmint}

One consequence is that, for groups $G$ as in the theorem, the random divergence defined in \cite{goldsboroughSistorandom} (and chosen according to tame, quasi-homogeneous Markov chains) is the same as the divergence of $G$. This means that generic points, chosen according to these Markov chains, realise the worst-case scenario for divergence, see \cite[Thm~1.1]{goldsboroughSistorandom}.
Note that if $G$ satisfies the stronger Assumption~\ref{defn:strong-21}, then using only Theorem~\ref{cor:linear-progress-HHG} and \cite[Thm~7.7]{goldsboroughsisto:markov} one can deduce a Central Limit Theorem on groups quasi-isometric to $G$. This also follows from Theorem~\ref{thmint:acyl} and \cite{MathieuSisto}.

\subsection*{Questions} Several questions arise. First, it would be interesting to find more classes of group actions on hyperbolic spaces such that quasi-isometries of the group descend to quasi-isometries of the space, in the sense of Theorem \ref{cor:HHG-QI-descend-intro}. For short, we will say in this case that \emph{quasi-isometries descend}.

\begin{problem}
Find more classes of groups admitting non-elementary actions on hyperbolic spaces with the property that quasi-isometries descend.
\end{problem}

To mention two specific instances:

\begin{question}
\begin{enumerate}
    \item[] 
    \item Do CAT(0) groups with rank-one elements admit actions on hyperbolic spaces such that quasi-isometries descend?\label{item:cat0-question}

    \item  \label{item:small-C-question} Do small-cancellation groups (of various flavours)?
\end{enumerate}
\end{question}

A candidate hyperbolic space for \eqref{item:cat0-question} could be the hyperbolic model from \cite{petytsprianozalloum:hyperbolic}, while for \eqref{item:small-C-question} (say for $C'(1/6)$ groups) it could be the space constructed in \cite{grubersisto:infinitely}.  For the CAT(0) question, any use of the model from \cite{petytsprianozalloum:hyperbolic} would have to use the geometric group action on the CAT(0) space in an essential way in view of the example in \cite{vest:curtain}. 

More strongly, it would be useful to identify more classes of groups such that Theorems \ref{thmint:acyl} and \ref{cor:linear-progress-HHG} apply. But we would like to emphasise that variations of said theorems should be possible. For instance, Theorem \ref{thmint:acyl} does not apply to $C'(1/6)$ groups for the candidate space given above as the action is not acylindrical in general, but a more general theorem might. Also, it is not known whether the action of a CAT(0) group on the hyperbolic model is acylindrical. A positive answer to either of the following questions might come from a more general version of Theorem \ref{thmint:acyl}. 

\begin{question}
\begin{enumerate}
    \item[]

    \item Let $G$ be a group quasi-isometric to an acylindrically hyperbolic CAT(0) group. Is $G$ necessarily acylindrically hyperbolic?

    \item Let $G$ be a group quasi-isometric to an infinitely presented $C'(1/6)$ group. Is $G$ necessarily acylindrically hyperbolic?
\end{enumerate}
\end{question}

Of course, one can also ask about analogues of Theorem \ref{cor:linear-progress-HHG}, whose conclusion we refer to as the linear progress for Markov chains property.

\begin{question}
\begin{enumerate}
    \item[]

    \item Do CAT(0) groups with rank-one elements admit non-elementary actions on hyperbolic spaces with the linear progress for Markov chains property? 

    \item Do small-cancellation groups (of various flavours)?
\end{enumerate}
\end{question}

Finally, motivated by the appendix, it is natural to ask:

\begin{question}
    Which HHGs have an HHG structure with unbounded products and one-ended $\nest$--maximal hyperbolic space?
\end{question}

Specifically, we believe that the answer is not known for extra-large type Artin groups and extensions of lattice Veech groups, for instance. In fact, there is even no known classification of right-angled Coxeter groups admitting an HHG structure as in the question.  However, for right-angled Artin groups, one can use \cite{abbottbehrstockdurham:largest} together with the HHG structure with unbounded products and maximal hyperbolic space a quasi-tree \cite{behrstockhagensisto:hierarchically:1}, to see that no HHG structure as in the question has one-ended maximal hyperbolic space.   

\subsection*{Outline of paper and proofs}
Section \ref{sec:prelim} contains some general geometric group theory preliminaries, as well as preliminaries on Markov chains. We also state the relevant assumptions on group actions and Markov chains that we will need later.

The hardest theorem in this paper is Theorem \ref{cor:linear-progress-HHG}, whose proof is contained in Sections \ref{sec:incompatible}--\ref{sec:proof}. In particular, in Sections \ref{sec:incompatible} and \ref{sec:bounded}, which contain the core of the geometric arguments involved, we will show roughly that with positive probability the Markov chain has a bounded projection to the axis of a fixed WPD element. This will be then used in Section \ref{sec:proof} to check a criterion for linear progress from \cite{goldsboroughsisto:markov}. 

The rough idea to show the bounded projection property is the following, and the reader might want to look at Figure \ref{fig:tikz:my} for reference. If the property fails, then with overwhelming probability the Markov chain creates a very large projection onto the axis. However, there is a positive probability that the Markov chains starts off in a Morse direction different from the axis. If it does, it needs to undo this second projection first, before creating the projection on the axis. We can also repeat this argument with another Morse direction which has a very different Morse gauge. Therefore, the Markov chain (from any basepoint due to quasi-homogeneity) has a large probability of creating a large projection in two directions with very different Morse gauges. This is not yet a contradiction, because we need to know that the two directions have different Morse gauges ``close to the basepoint'' rather than, say, starting out in the same way and then diverging later. This is what the notion of incompatible Morse rays from Section \ref{sec:incompatible} is supposed to capture, and in that section we study it and prove the required preliminary results.

In Section \ref{sec:descent} we show that quasi-isometries of well-behaved HHS descend to their maximal hyperbolic space, and related results. There are two main ideas here. The first one is to use a result from \cite{behrstockhagensisto:quasiflats} which says, roughly, that quasi-isometries between HHSs descend to quasi-isometries of ``simpler'' HHSs (certain so-called factored spaces) obtained coning-off certain product regions. One can repeat the procedure until the ``simpler'' HHSs are actually just hyperbolic spaces, but those will not in general be the maximal hyperbolic spaces of the HHSs as there might be further (quasiconvex) subspaces to cone-off to get there; this happens even for mapping class groups. The additional idea allows us to recognise these subspaces, and roughly we show in Proposition \ref{prop:product_characterisation} that two points are in the same subspace to be coned-off if and only if their coarse fibres in the original HHS are ``parallel''.

In Section \ref{sec:acyl-qi} we show our results related to quasi-isometry invariance of acylindrical hyperbolicity. The idea here is the following. If one has a group $G$ acting on a hyperbolic space $X$ with the property that quasi-isometries descend, and if $H$ is quasi-isometric to $G$, then $H$ quasi-acts on $G$ and therefore on $X$. The quasi-action on $X$ can be promoted to an action on a space $Y$ quasi-isometric to $X$, which is also hyperbolic.  This action admits a loxodromic element due to the classification of actions on hyperbolic spaces, and what is left to show is that any loxodromic is WPD. This comes from acylindricity of the original action, which can be translated into a geometric property about preimages of balls being geometrically separated as in \cite{sisto:quasi-convexity}. 

Finally, the appendix by Jacob Russell contains the result about quasi-isometries of the maximal hyperbolic space of an HHG coming from quasi-isometries of the HHG, under suitable conditions.

\subsection*{Acknowledgements}  We thank the organisers of the thematic programme on geometric group theory at the Centre de Recherches Math\'ematiques, where some of the work on this project was done. Goldsborough was supported by the EPSRC DTA studenship EP/V520044/1. Russell was supported by NSF grant  DMS-2103191. 
 The authors of the non-appendix part of the paper thank Jacob Russell for pointing out a subtlety involving the application of \cite[Thm. 3.7]{abbottbehrstockdurham:largest} and Abdul Zalloum for interesting conversations.  We are also grateful to the referee for numerous helpful comments.

\section{Background and assumptions}
\label{sec:prelim}

To set notation, we recall some standard notions from geometric group theory. All hyperbolic spaces considered in this paper will be geodesic.

\subsection{Geometric group theory notions and definitions} 
\label{subsec:ggt_defns}

A map $f:Y \to X $ between metric spaces is called a $(\lambda, \epsilon)$-\textit{quasi-isometric embedding}, with $\lambda \geq 1, \epsilon \geq 0$, if for all $x,y \in Y $ we have: $
\lambda^{-1}d_Y(x,y)-\epsilon \leq d_X(f(x),f(y)) \leq \lambda d_Y(x,y) +\epsilon$. We say that $f$ is a $(\lambda,\epsilon)$--quasi-isometry if, in addition, for all $x \in X$, there exists an element $y \in Y$ such that $d_X(f(y), x) \leq \epsilon$. If $Y$ is a segment of $\mathbb R$, we call the image of $f$ in $X$ a  $(\lambda, \epsilon)$-\textit{quasi-geodesic}.

We will call a  $(\lambda, \lambda)$-quasi-geodesic a $\lambda$-\textit{quasi-geodesic}, and similarly for quasi-isometric embeddings and quasi-isometries. A subset $Y$ of a geodesic metric space $X$ is \textit{quasi-convex} if there is a constant $C \geq 0$ such that all geodesics with endpoints in $Y$ stay within the $C$-neighbourhood of $Y$. Further, we say that a map $f:X \to Y$ between metric spaces is \textit{R-coarsely Lipschitz} if $d_Y(f(x), f(y)) \leq Rd_X(x,y)+R$ for all $x,y \in X$.

Let $M$ be a function $\mathbb [1, \infty) \times [0, \infty) \to [0, \infty)$. We say a (quasi)-geodesic $\gamma$ is $M$-\textit{Morse} if any $(\lambda, \epsilon)$-quasi-geodesic with endpoints on $\gamma$ stays within the $M(\lambda, \epsilon)$-neighbourhood of $\gamma$. We call $M$ the \textit{Morse gauge of} $\gamma$. We can always assume that $M$ takes values in $\mathbb N$. Note that a Morse quasi-geodesic is quasi-convex. 

If a group $G$ acts on a hyperbolic metric space $X$ (with basepoint $x_0$), we say an element $g$ is \textit{loxodromic} if the map $\mathbb Z \to X$ given by $n \to g^nx_0$ is a quasi-isometric embedding. In this case, $\langle g \rangle x_0$ is quasi-convex in $X$. We say that $g$ satisfies the \textit{weak proper discontinuity condition}, or that $g$ is \emph{WPD}, if for all $\kappa >0$ and $x_0 \in X$ there exists $N \in \mathbb{N}$ such that 
$$
\# \{ h \in G \vert \quad d_X(x_0, hx_0)< \kappa, \quad d_X(g^{N}x_0, hg^{N}x_0) < \kappa \} < \infty.
$$ 
Each loxodromic WPD element $g$ is contained in a unique maximal elementary subgroup of $G$, denoted $E(g)$ and called the \textit{elementary closure} of $g$, see \cite[Lemma~6.5]{dahmaniguirardelosin:hyperbolically}.

Finally, all groups we consider are finitely generated, and whenever we consider a group $G$ we automatically fix a word metric $d_G$ on $G$ coming from a finite generating set.

\subsection{Assumptions on the group action}
Now that we recalled the relevant notions, we can state our assumptions on the group action on a hyperbolic space that we will use for our result on linear progress.  The slightly different assumptions needed for the results on acylindrical hyperbolicity are postponed to Section \ref{sec:acyl-qi}.

Let $G$ be a group acting on a hyperbolic space $X$ and fix $x_0\in X$. We write $\rho:G\to X$ for the corresponding orbit map, though sometimes we shall suppress this. We shall use:

\begin{assumptions} 
\begin{enumerate} \label{assumpt:space}
\item[] 
\item \label{item:exist_wpd}    Some element of $G$ acts on $X$ as a loxodromic WPD.
\item \label{item:exist_quasi_isometry} \emph{Quasi-isometries of $G$ descend to $X$.} This means that, for each $\nu$ there exists $\lambda$ such that if $\phi:G\to G$ is a $\nu$--quasi-isometry, then there is some $\lambda$--quasi-isometry $\bar \phi:X\to X$ such that $\dist_X(\phi(g)x,\, \bar\phi(gx))\leq \lambda$ for all $g\in G$ and $x\in X$.
\item \label{item:morse_detectable} \emph{Partial (Morse) detectability.} This means: for every Morse gauge $M$ there exists $\lambda$ such that if $\gamma\subset G$ is an $M$--Morse geodesic, then $\rho\gamma\subset X$ is a $\lambda$--quasi-geodesic.
\end{enumerate}
\end{assumptions}

The notion of \emph{Morse detectability} of $G$ is \cite[Def.~4.17]{russellsprianotran:local}, which is stated as an equivalence of two properties, and the partial detectability hypothesis above is one of the two implications.

\subsection{Projections}\label{subsec:projections}

Given a group $G$ acting on a hyperbolic space $X$, we will make use of $X$ to define 'projection maps' to subsets of $G$. The following definition makes this precise.

\begin{definition}(\cite[Definition 3.2]{goldsboroughsisto:markov})
\label{defn:X_proj}
Let $G$ act on a hyperbolic space $X$, and fix $x_0\in X$. For $A\subseteq G$, an $X$-\textit{projection} is a retraction $\pi_A: G \rightarrow A$ such that for all $g\in G$ the point $\pi_A(g)x_0$ is a closest point in $Ax_0$ to $gx_0$. 
\end{definition}

Given a subset $B$ of a metric space $X$, we call a map $\pi:X\to B$ a \emph{closest-point projection} if $d_X(x,\pi(x))=d_X(x,B)$ for all $x\in X$. The following lemma is a well-known exercise in hyperbolic geometry, see e.g. \cite[Lemma 2.1]{goldsboroughsisto:markov}.

\begin{lemma} 
\label{lem:exo_hyperbolicspace}
Let $X$ be a $\delta$-hyperbolic space. Let $Q$ be a quasi-convex set and $\pi_Q:X \rightarrow Q$ a closest-point projection. There exists a constant $R>0$ depending only on $\delta$ and the quasi-convexity constant such that the following hold.
\begin{enumerate}
\item $\pi_Q$ is $R$-coarsely Lipschitz.
\item For all $x,y \in X$ with $d_{X}(\pi_Q(x), \pi_Q(y)) \geq R$ and for any geodesic $[x,y]$ from $x$ to $y$, there are points $m_1, m_2 \in [x,y]$ such that $d_X(m_1,\pi_Q(x)) \leq R$ and $d_X(m_2,\pi_Q(y)) \leq R$. Furthermore, the subgeodesic of $[x,y]$ from $m_1$ to $m_2$ lies in the $R$-neighbourhood of $Q$.
\item If $\pi'_Q:X\to Q$ is another closest-point projection, then $\dist_X(\pi_Q(x),\pi'_Q(x))\le R$ for all $x\in X$.
\end{enumerate}
\end{lemma}

A particular consequence of the final part of Lemma~\ref{lem:exo_hyperbolicspace} is that if a group $G$ acts on a hyperbolic space $X$, then whenever $A\subset G$ has quasiconvex orbit and $\pi_A$ is an $X$--projection, we have a uniform bound $\dist_X(\pi_A(p)x_0,\pi_{Ax_0}(px_0))\le R$.

The following lemma is well known. Inequalities of this type are often referred to as ``Behrstock inequalities''.

\begin{lemma} 
\label{lem:behrtock_inequality}
Let $X$ be a $\delta$-hyperbolic space and let $Q_1 \neq Q_2$ be two quasi-convex sets and $\pi_{Q_i} :X \to Q_i$ be closest point projections. There is a constant $B$ only depending on $\delta$ and the quasi-convexity constants of $Q_1$ and $Q_2$ such that for all $x \in X$ we have 
$$ d_X\left(\pi_{Q_1}(x), \pi_{Q_1}(Q_2) \right) > B \implies d_X\left ( \pi_{Q_2}(x), \pi_{Q_2}(Q_1) \right) \leq B.$$
\end{lemma}

The following lemma is an exercise in hyperbolic geometry and it states that in a hyperbolic space quasi-isometries and closest-point projections are coarsely compatible. % It is an exercise in hyperbolic geometry, we include the proof for completeness.

\begin{lemma} 
\label{lem:quasi_isometries_and_projections}
Let $X$ be a $\delta$-hyperbolic space and $f:X \to X$ a $\lambda$-quasi-isometry. Let $Q \subseteq X$ be a quasi-convex subspace and let $\pi_Q:X \to Q$ be a closest point projection. Then there exists a constant $N$, depending only on $\lambda$, $\delta$, and the quasiconvexity constant, such that for all $x \in X$,
$$ d_X\left(\pi_{f(Q)}(f(x)),f(\pi_Q(x)\right) \leq N.  $$
\end{lemma}

\begin{notation}
\label{not:projections}
For a group $G$ acting on a hyperbolic space with fixed basepoint $x_0$ and corresponding orbit map $\rho$, given a subset $\alpha\subseteq G$ (usually a quasi-geodesic), we will always implicitly fix an $X$-projection $\pi_\alpha: G\to\alpha$. We also abbreviate the diameter of the union of two projections as measured in $X$ by
$$d_{\alpha}(x,y) \,=\, \diam_X\Big(\rho\big(\pi_\alpha(x)\cup \pi_\alpha(y)\big)\Big),$$
where $x$ and $y$ can be either points or subsets of $G$. 
\end{notation}

\subsection{Markov chains}

We refer the reader to \cite{goldsboroughsisto:markov} for more background information on Markov chains, while here we describe the notion informally. A Markov chain on a group arises when transition probabilities $p(g,h)$ are assigned for all $g,h\in G$. These encode the probability that the Markov chain ``jumps'' from $g$ to $h$ in one step. The probability of going from $g$ to $h$ in $n$ steps is a sum over all possible trajectories, that is, sequences of jumps, of length $n$ of executing that exact sequence of jumps, and this is the product of the relevant transition probabilities. 

We usually denote a Markov chain on a group by $(w_n^*)$, where $w_n^o$ denotes the position of the Markov chain starting at $o$ after $n$ steps, and we are usually interested in quantities such as $\mathbb P[w_n^o=g]$, the probability of getting from $o$ to $g$ in $n$ steps.

The following notion of tameness was defined in \cite{goldsboroughsisto:markov}.

\begin{definition}[Tame]\label{defn:tame}
A Markov chain on $G$ is \textit{tame} if it satisfies the following:
\begin{enumerate}
\item\label{item:bounded_jumps} \emph{Bounded jumps:} There exists a finite set $S\subseteq G$ such that $\mathbb P[w^g_1=h]=0$ if $h\notin gS$.
\item\label{item:non-amen} \emph{Non-amenability:} There exist $A>0$ and $\rho<1$ such that for all $g,h\in G$ and $n\geq 0$ we have $\mathbb P[w^g_n=h]\leq A\rho^n$.
\item\label{item:irred} \emph{Irreducibility:} For each $s \in G$ there exist constants $\epsilon_s, K_s>0$ such that for all $g \in G$ we have $\mathbb P[w^g_k=gs] \geq \epsilon_s$ for some $k \leq K_s$.
%   \item\label{item:reverse}  {\bf (Strong reversibility)} There is a constant $A>0$ such that $p_G(x,y) \leq A p_G(y,x)$ for all $x,y \in G$.
\end{enumerate}
%	where we recall that $p^{(n)}(x,y)=\mathbb P \big[ w^x_{n}=y\big]$.
\end{definition}

For a bijection $\phi:G \to H$ and a Markov chain $(w_n^{*})$ on $G$, there is a natural push-forward Markov chain on $H$, which we denote by $\phi_{\#}(w_n^{*})$. This is the Markov chain such that $\mathbb P \big[\phi_{\#}(w_n^{o})=h \big]\,=\,\mathbb P\big[w_n^{o}=\phi^{-1}(h) \big]$ for all $h,o\in H$ and $n\geq 0$. To clarify, $\phi_{\#}(w_n^{o})$ is the instance of the Markov chain starting at $\phi(o)$.

\begin{definition}[Quasi-homogeneous]
\label{assumpt:markov_chain_quasihomogeneous}
A Markov chain $(w_n^{*})$ is \emph{quasi-homogeneous} if it has the following property for some $\nu$. For every $p,q \in G$ there is a bijective $\nu$--quasi-isometry $\phi:G \rightarrow G$ with $\phi(p)=q$ and $\phi_\#(w_n^o)=(w_n^{\phi(o)})$ for all $o\in G$.

\end{definition}

\begin{remark}
A random walk driven by a measure whose support is bounded and generates the group as a semi-group is a tame Markov chain, and moreover any push-forward of such a random walk by a bijective quasi-isometry is a tame Markov chain by \cite[Lemma 2.8]{goldsboroughsisto:markov}. Since any random walk is group-invariant, it is readily seen that such a push-forward is in fact also quasi-homogeneous.
\end{remark}

\section{Incompatible Morse rays}
\label{sec:incompatible}

In this section we consider a group $G$ acting on a hyperbolic space $X$ such that Assumption \ref{assumpt:space}.\eqref{item:morse_detectable} holds, that is to say, Morse geodesics in $G$ map to parametrised quasi-geodesics in $X$.

In particular, we are interested in a criterion to guarantee that two Morse rays in $G$ travel in genuinely distinct directions in $X$. The key definition to achieve this is the following, which roughly describes a ray which, while being Morse, has an initial subgeodesic of controlled length which is ``not very Morse''. 

\begin{definition}
Let $Z$ be a metric space, let $M: [1,\infty)\times[0,\infty) \to \mathbb N $ be a Morse gauge, and let $\kappa, L \geq 0$. We say a Morse ray $\beta: [0, \infty)\to Z$ is \textit{$(M, \kappa, L)$-incompatible} if there exists a $(k,c)$-quasi-geodesic $\mu$ with endpoints on $\beta\vert_{[0, L]}$, such that there is a point $p \in \mu$ with $d(p, \beta) > M(k,c+2\kappa)+2\kappa$.
\end{definition}

The main result in this section %are Lemma \ref{lem:small_gromov}, giving small Gromov products for incompatible rays, and 
is Lemma \ref{lem:existence_incompatible}, guaranteeing the existence of incompatible rays. We need two preliminary results. The first one says that, given a Morse ray $\gamma$ and a quasi-geodesic ray $\alpha$, either $\alpha$ is contained in a controlled neighbourhood of $\gamma$, or $\alpha$ at some point starts diverging from $\gamma$ at a linear rate.

\begin{lemma}[\cite{cashenmackay:metrizable}, Corollary 4.3]
\label{lem:cashen_mackay}
Let $\alpha$ be a $M$-Morse quasigeodesic ray and let $\beta$ be an $(a,b)$-quasi-geodesic ray, both in some proper geodesic metric space $Z$. There exist $\kappa_1 = \kappa_1 (M, a,b)$ and $\kappa_2=\kappa_2(\kappa_1,a,b)$ such that the following holds.  If $d(\alpha(0), \beta) \leq \kappa_1$ then there are two possibilities: 
\begin{itemize}
	\item %The set $\{t \;  \vert  \; d(\beta(t), \alpha) \leq \kappa\}$ is unbounded and 
            $\beta$ is contained in the $\kappa_2$-neighbourhood of $\alpha$.
	%\item There exists a last time $T_0$ such that $d(\alpha(T_0), \gamma)=\kappa$ and for all $t$ we have: $$d(\alpha(t), \gamma) \geq \frac{1}{2a}(t-T_0)-2(b+\kappa). $$
    \item There exists a last time $T_0$ such that %$\dist(\beta(t),\alpha)>\kappa_1$ for all $t>T_0$, but for every $\varepsilon>0$ there exists $t\in(T_0-\varepsilon,T_0]$ such that $d(\beta(t),\alpha)\le\kappa_1$. Moreover, 
            for all $t\in[0,\infty)$ we have $$d(\beta(t), \alpha) \geq \frac{1}{2a}(t-T_0)-2(b+\kappa_1). $$
	\end{itemize}
\end{lemma}

We note that in \cite{cashenmackay:metrizable} $\alpha$ is a geodesic ray, but since our $\alpha$ is Morse it lies within finite Hausdorff distance of a geodesic ray.

%We fix the function $\kappa$ as in the lemma from now on.

The following result says that a geodesic triangle where two sides are Morse is thin.

\begin{lemma}\cite[Lemma 2.2]{cordes:morse}
\label{lem:cordes_morse_triangles_thin}
For all Morse gauges $M_1, M_2$ there is a constant $\Delta=\Delta(M_1, M_2)$ such that every geodesic triangle where two of the sides are respectively $M_1$- and $M_2$-Morse is $\Delta$-thin. 
\end{lemma}

The following lemma states that an $M$-Morse ray and an $M$-incompatible Morse ray have bounded projection onto each other. The constant dependencies are rather involved, but they are crucial for the proof of the key Proposition \ref{prop:small_proj}. Recall the notation that the orbit map $\rho:G\to X$ given by $g\mapsto gx_0$ is $K$--Lipschitz.

\begin{lemma}
\label{lem:small_gromov}
Suppose that $G$ acts on a $\delta$--hyperbolic space $X$ and that Assumption \ref{assumpt:space}.\ref{item:morse_detectable} holds. For every Morse gauge $M$ and constant $\nu$ there exists $\kappa=\kappa(M,\nu)$ such that for every Morse gauge $M'$ and constant $L$ there exists $D=D(M,\nu,M',L)$ such that the following holds.

Let $\alpha\subset G$ be an $M$-Morse $\nu$-quasi-geodesic ray issuing from $1\in G$. If $\beta\subset G$ is an $M'$-Morse ray issuing from $1\in G$ that is $(M,\kappa,L)$-incompatible, then
$$\diam_G\left(\pi_{\alpha}(\beta)\right) \leq D \quad \text{and} \quad 
    \diam_G\left(\pi_{\beta}(\alpha)\right) \leq D.$$
\end{lemma}

\begin{proof}
Let $\kappa_1$ and $\kappa_2$ be the constants given by applying Lemma~\ref{lem:cashen_mackay} to the $M$--Morse quasigeodesic $\alpha$ and the geodesic $\beta$. According to Assumption~\ref{assumpt:space}.\eqref{item:morse_detectable}, there are $\lambda_\alpha=\lambda_\alpha(M)$ and $\lambda_\beta=\lambda_\beta(M')$ such that $\rho\alpha\subset X$ is a $\lambda_\alpha$--quasigeodesic and $\rho\beta\subset X$ is a $\lambda_\beta$--quasigeodesic. Let $R=R(\lambda_\alpha)$ be given by applying Lemma~\ref{lem:exo_hyperbolicspace} to $\rho\alpha\subset X$. and let $\Delta=\Delta(M,M')$ be the constant from Lemma \ref{lem:cordes_morse_triangles_thin}. 

We first bound $\diam_G(\pi_\alpha(\beta))$. For this, we shall consider sufficiently large constants
\[
\eps\,=\,\eps(\lambda_\alpha,\lambda_\beta,\delta) \quad\text{and}\quad U\,=\,U(M,M',R,\eps,\Delta).
\]
From these, we define 
\[
D_1 \,=\, (U+L+R+\eps+2\Delta+2M(1,0)+6\kappa_1)K\lambda_\alpha+\lambda_\alpha^2.
\]

Suppose that, contrary to the desired result, there is some $v\in\beta$ such that $\dist_G(1,\pi_\alpha(v))>D_1$. Since $\rho\alpha$ is a $\lambda_\alpha$--quasigeodesic, we have $\dist_X(x_0,\pi_\alpha(v)x_0)>K(U+L+R+\eps)$. As noted after Lemma~\ref{lem:exo_hyperbolicspace}, $\pi_\alpha(v)x_0$ differs from $\pi_{\rho\alpha}(vx_0)$ by at most $R$. 

Since $\rho\beta$ is a $\lambda_\beta$--quasigeodesic, the Morse lemma implies that every geodesic from $x_0$ to $vx_0$ stays uniformly close to $\rho\beta$, so Lemma~\ref{lem:exo_hyperbolicspace} provides a point $u\in\beta$ such that $\dist(ux_0,\pi_{\rho\alpha}(vx_0))\le\eps$, where $\eps=\eps(\lambda_\alpha,\lambda_\beta,\delta)$ is a uniform constant. Let us write $u=\beta(t)$. By the construction of $u$, we have 
\begin{align*}
t-U \,=\, \dist_G&(1,u)-U \,\ge\, \frac1K\dist_X(x_0,ux_0)-U \\
&\ge\, \frac1K\big(\dist_X(x_0,\pi_\alpha(v)x_0) \,-\, \dist_X(\pi_\alpha(v)x_0,\pi_{\rho\alpha}(vx_0)) 
    \,-\, \dist_X(\pi_{\rho\alpha}(vx_0),ux_0)\big) \,-\, U \\
&>\, \frac1K(K(U+L+R+\eps+2\Delta+2M(1,0)+6\kappa_1)-R-\eps)-U \\
&=\, L+2\Delta+2M(1,0)+6\kappa_1 \,>\, L.
\end{align*}

\begin{claim}
\label{claim:Delta_nbhd}
$\beta[0, t-U]\subset\mathcal N^G_{\Delta+M(1,0)}(\alpha)$.
\end{claim}

\begin{subproof}{Proof}
Consider a geodesic $[1,\pi_\alpha(v)]$ in $G$. Because $\alpha$ is $M$--Morse, $[1,\pi_\alpha(v)]$ lies in the $M(1,0)$--neighbourhood of $\alpha$, and is $M_+$--Morse, where $M_+=M+M(1,0)$. Now consider a geodesic $[u,\pi_\alpha(v)]$ in $G$. The geodesic triangle formed by $[1,\pi_\alpha(v)]$, $[\pi_\alpha(v),u]$, and $\beta[0,t]$ has two sides that are $M'$-- and $M_+$--Morse, so, according to \cite[Lem.~2.3]{cordes:morse}, the geodesic $[u,\pi_\alpha(v)]$ is uniformly Morse. Assumption~\ref{assumpt:space}.\eqref{item:morse_detectable} then tells us that $\rho[u,\pi_\alpha(v)]$ is a $\lambda'$--quasigeodesic, where $\lambda'=\lambda'(M',M_+)$. Since $u$ was constructed so that $\dist_X(ux_0,\pi_\alpha(v)x_0)\le\eps+R$, this implies that $\dist_G(u,\pi_\alpha(v))\le\lambda'(R+\eps)+\lambda'$. Ensure that $U$ is larger than this bound by at least $\Delta$.

From Lemma~\ref{lem:cordes_morse_triangles_thin}, we know that the above geodesic triangle is $\Delta$--thin. Thus, for $s<t-U$, the point $\beta(s)$ must be $\Delta$--close to $[1,\pi_\alpha(v)]$. In turn, this means that it is $(\Delta+M(1,0))$--close to $\alpha$. 
\end{subproof}

We write $t'=t-U-2\Delta-2M(1,0)-6\kappa_1$. Note that by the above computation, $t'>L$.

\begin{claim}
\label{claim:kappa_neigh}
$\beta[0,t'] \subseteq \mathcal N^G_{\kappa_2}(\alpha)$.
\end{claim}

\begin{subproof}{Proof}
If not, then by Lemma~\ref{lem:cashen_mackay}, there exists $T_0\le t'$ such that for all $s\ge0$ we have $\dist_G(\beta(s),\alpha)\ge\frac12(s-T_0)-2\kappa_1$. In particular, for $s=t-U$, Claim~\ref{claim:Delta_nbhd} leads to
\begin{align*}
\Delta+M(1,0) \,\ge\, \dist_G&(\beta(t-U,\alpha)) \,\ge\, \frac12(t-U-T_0)-2\kappa_1 \\
&\ge\, \frac12(t-U-t')-2\kappa_1 \,\ge\, \Delta+M(1,0)+\kappa_1,
\end{align*}
a contradiction.
\end{subproof}

Since $\alpha$ is a $\nu$--quasigeodesic and $\beta$ is a geodesic, it follows from Claim~\ref{claim:kappa_neigh} that there is an initial subsegment $\alpha'$ of $\alpha$ that stays $\kappa$--close to $\beta[0,t']$, where $\kappa$ is a uniform constant depending only on $\kappa_2$ and $\nu$. That is, $\kappa=\kappa(M,\nu)$. In other words, the Hausdorff-distance between $\alpha'$ and $\beta[0,t']$ is at most $\kappa$.

Since $t'>L$, the fact that $\beta$ is $(M,\kappa,L)$--incompatible means that there is some $(k,c)$--quasigeodesic $\mu$ with endpoints $\mu^-$ and $\mu^+$ on $\beta[0,t']$ such that there is some $p\in\mu$ with $\dist_G(p,\beta)>M(k,c+2\kappa)+2\kappa$. Let $y^-$ and $y^+$ be closest points in $\alpha$ to $\mu^-$ and $\mu^+$, respectively. We have $\dist(y^\pm,\mu^\pm)\le\kappa$. It follows that the path 
\[
[y^-,\mu^-]\cup\mu\cup[\mu^+,y^+]
\]
is a $(k,c+2\kappa)$--quasigeodesic with endpoints on the $M$--Morse quasigeodesic $\alpha'$. But this path contains $p\in\mu$, which is at a distance of more than $M(k,c+2\kappa)+2\kappa$ from $\beta[0,t']$. Since $\beta[0,t']$ and $\alpha'$ are at Hausdorff-distance at most $\kappa$, this contradicts the fact that $\alpha$ is $M$--Morse. We conclude from this contradiction that $\dist_G(1,\pi_\alpha(v))\le D_1$ for all $v\in\beta$.

We now turn to the other inequality. Given a point $v\in\alpha$, consider $w=\pi_\beta(v)$ and $y=\pi_\alpha(w)$. We aim to bound $\dist_G(1,w)$. From the above, we know that $\dist_G(1,y)\le D_1$.

Letting $R_\beta$ be given by applying Lemma~\ref{lem:exo_hyperbolicspace} to $\rho\beta$, we have $\dist_X(wx_0,\pi_{\rho\beta}(vx_0))\le R_\beta$ and $\dist_X(yx_0,\pi_{\rho\alpha}(wx_0))\le R$ by the comment after that lemma. Since $\rho\alpha$ and $\rho\beta$ are uniform quasigeodesics in the $\delta$--hyperbolic space $X$ issuing from the common point $x_0$, there is a uniform bound on $\dist_X(\pi_{\rho\beta}(vx_0),\pi_{\rho\alpha}\pi_{\rho\beta}(vx_0))$. Lemma~\ref{lem:exo_hyperbolicspace} also states that $\pi_{\rho\alpha}$ is $R$--coarsely Lipschitz, so by combining these bounds we obtain a uniform bound on $\dist_X(wx_0,yx_0)$.  

Because the restrictions of $\alpha$ and $\beta$ between 1 and $y$ and between 1 and $w$ are, respectively, $M$-- and $M'$--Morse, \cite[Lem.~2.3]{cordes:morse} tells us that a geodesic $[w,y]$ in $G$ is uniformly Morse. By Assumption~\ref{assumpt:space}.\eqref{item:morse_detectable}, it follows that $\rho[w,y]$ is a uniform quasigeodesic, and hence $\dist_G(w,y)$ is bounded by some uniform constant $\eps'$. %=\eps'(R,R_\beta,\lambda_\alpha,\lambda_\beta,\delta,M,M')$. 
We can now compute
\[
\dist_G(1,w) \,\le\, \dist_G(1,y)+\dist_G(y,w) \,\le\, D_1+\eps'. \qedhere
\]

\end{proof}

We now show the existence of an $(M, \kappa, L)$-incompatible ray, which combined with Lemma \ref{lem:small_gromov} will be enough to show that we can find two Morse rays that diverge after a short distance.

\begin{lemma}
\label{lem:existence_incompatible}
Let $G$ be a non-hyperbolic group acting on a hyperbolic space $X$ such that Assumption \ref{assumpt:space}.\eqref{item:morse_detectable} holds. For all Morse gauges $M$ and constants $\kappa$, there exist $M', L$ such that there is an $M'$-Morse ray which is $(M,\kappa, L)$-incompatible.
\end{lemma}

\begin{proof}
Given $M$ and $\kappa$, let $M_1(k,c)=M(k,c+2\kappa)+2\kappa$. Since $G$ is not hyperbolic, its Morse boundary is not compact \cite[Cor.~1.17]{cordesdurham:boundary}, so there must be some Morse ray $\beta$ that is not $M_1$-Morse. Let $M'$ be the Morse gauge of $\beta$. As $\beta$ is not $M_1$-Morse, there are some $k,c>0$ such that there is a $(k,c)$-quasi-geodesic $\mu$ with endpoints on $\beta$ that contains a point $p$ with $\dist_G(p,\beta)>M_1(k,c)$. Let $L$ be such that the endpoints of $\mu$ lie in $\beta[0,L]$. By definition, $\beta$ is $(M,\kappa, L)$--incompatible.
\end{proof}
 
\begin{corollary}
\label{cor:small_gromov_product_of_different_rays}
Let $G$ be a non-hyperbolic group acting on a hyperbolic space $X$ such that Assumption \ref{assumpt:space}.\eqref{item:morse_detectable} holds. For every Morse gauge $M$ there is a Morse gauge $M'$, a constant $D>0$, and an $M'$--Morse ray $\beta\subset G$, issuing from $1\in G$, such that if $\alpha$ is an $M$-Morse ray issuing from $1\in G$ then $\diam \left( \pi_{\alpha}(\beta) \right) \leq D$ and $\diam \left( \pi_{\beta}(\alpha) \right) \leq D$.
\end{corollary}

\begin{proof}
Let $\kappa=\kappa(M)$ be given by Lemma~\ref{lem:small_gromov}. By Lemma~\ref{lem:existence_incompatible}, there are $M'$ and $L$ such that there is an $M'$--Morse ray $\beta$ that is $(M,\kappa,L)$--incompatible. Now apply Lemma~\ref{lem:small_gromov}.
\end{proof}

\section{Bounded projections to axes}
\label{sec:bounded}

The goal of this section is to prove Proposition~\ref{prop:small_proj}, which is an important part of the proof of Theorem~\ref{thm:linear_progress} on linear progress in Section~\ref{sec:proof}. It asserts that there is a definite probability that tame, quasi-homogeneous Markov chains have bounded projection to any given loxodromic WPD.

\begin{proposition}
\label{prop:small_proj} 
Let $G$ be a non-hyperbolic group satisfying Assumption~\ref{assumpt:space} and let $(w_n^{*})$ be a tame, quasi-homogeneous Markov chain. Fix a loxodromic WPD element $g\in G$. There exist $C, \epsilon >0$ such that for all $p,h \in G$ and for all $n \in \mathbb N$, we have
	$$\mathbb P \Big[d_{hE(g)}(p,w_n^{p})\leq C\Big] > \epsilon.$$
\end{proposition}

Recall from Section~\ref{subsec:ggt_defns} that $E(g)$ is the elementary closure of $g$. See Notation~\ref{not:projections} for the definition of $d_{hE(g)}$. For ease of notation, we write $\gamma=E(g)$ for the remainder of this section. Let us write $\nu$ for the quasi-homogeneity constant of $(w_n^*)$. Recall that $\rho:G\to X$ is the orbit map with basepoint $x_0$.

\begin{lemma} 
\label{lem:homogeneous_morse} 
For each Morse gauge $M$ and $\nu \ge 1$, there is a constant $A>0$ such that the following holds for all $C>1$. Let $\xi\subset G$ be an $M$--Morse ray, and let $p'\in\xi$, $p\in G$. If $\phi:G\to G$ is a bijective $\nu$--quasi-isometry with $\phi(p')=p$, then 
$$ d_{\phi\xi}(p,\phi(h)) \,>\, \frac1Ad_{\xi}(p',h)-A$$
for all $h\in G$. Moreover,
\[
\mathbb P \Big[ d_{\phi\xi}(p,\phi_\#(w_n^{p'}))>C \Big] \,\ge\, \mathbb P \Big[ d_\xi(p',w_n^{p'}) >A(C+A)\Big].
\]
\end{lemma}

\begin{proof}
The first statement is essentially a consequence of Assumption~\ref{assumpt:space}.\eqref{item:exist_quasi_isometry}, that $\phi$ descends to a quasi-isometry of $X$, and Lemma~\ref{lem:quasi_isometries_and_projections}, which states that ``the translate (by a quasiisometry) of the projection of a point to a quasigeodesic coarsely agrees with the projection of the translate (of the point) to the translate (of the quasigeodesic)''. The nature of $X$--projections is such that a careful argument requires additional small errors. We now give details.

Let $\lambda\ge1$, given by Assumption~\ref{assumpt:space}, be such that $\xi x_0$ and $\phi(\xi)x_0$ are $\lambda$-quasi-geodesics. As noted after Lemma~\ref{lem:exo_hyperbolicspace}, if $Y\subset G$ has $\lambda$--quasiconvex orbit then there is a uniform constant $R$ such that the maps $\rho\pi_Y$ and $\pi_{Yx_0}\rho$ differ by at most $R$. Thus 
\[
\dist_{\phi\xi}(p,\phi(h)) \,\ge\, \dist_X(\pi_{\phi(\xi)x_0}(px_0),\pi_{\phi(\xi)x_0}(\phi(h)x_0))-2R.
\]
According to Assumption~\ref{assumpt:space}.\eqref{item:exist_quasi_isometry}, the maps $\rho\phi$ and $\bar\phi\rho$ differ by at most $\lambda$, so there is a uniform constant $R'$ such that 
\[
\dist_{\phi\xi}(p,\phi(h)) \,\ge\, \dist_X\big(\pi_{\bar\phi(\xi x_0)}(px_0),\pi_{\bar\phi(\xi x_0}(\bar\phi(hx_0))\big) - 2R'.
\]
Using Lemma~\ref{lem:quasi_isometries_and_projections}, there is now a uniform constant $N$ such that 
\[
\dist_{\phi\xi}(p,\phi(h)) \,\ge\, \dist_X\big(\bar\phi\pi_{\xi x_0}(\bar\phi^{-1}(px_0)),\bar\phi\pi_{\xi x_0}(hx_0)\big) - 2R'-2N.
\]
As $\rho\phi$ and $\bar\phi\rho$ coarsely agree, $\bar\phi^{-1}(px_0)$ is uniformly close to $p'x_0$, because $\phi(p')=p$. Also note that, by Lemma~\ref{lem:exo_hyperbolicspace}, the map $\pi_{\xi x_0}$ is uniformly coarsely Lipschitz. Hence the fact that $\bar\phi$ is a $\lambda$--quasi-isometry means that there is a uniform constant $R''$ such that 
\[
\dist_{\phi\xi}(p,\phi(h)) \,\ge\, \frac1\lambda \dist_X(\pi_{\xi x_0}(p'x_0),\pi_{\xi x_0}(hx_0))-R''.
\]
To complete the proof of the first statement, observe that since $\rho\pi_{\xi}$ and $\pi_{\xi x_0}\rho$ differ by at most $R$, we have 
\[
\dist_X(\pi_{\xi x_0}(p'x_0),\pi_{\xi x_0}(hx_0)) \,\ge\, \dist_\xi(p',h)-2R.
\]

For the second statement, let $C>1$. By the first statement, any $h\in G$ with $\dist_\xi(p',h)>A(C+A)$ also satisfies $\dist_{\phi\xi}(p,\phi(h))>C$. We can therefore use quasi-homogeneity and the fact that $\phi$ is a bijection to compute
\begin{align*}
\mathbb P\big[\dist_\xi(p',w^{p'}_n)>A(C+A)\big] \,&=\,
    \sum_{\dist_\xi(p',h)>A(C+A)}\mathbb P\big[w^{p'}_n=h\big] \\
&\le \sum_{\dist_{\phi\xi}(p,\phi(h))>C}\mathbb P\big[w^{p'}_n=h\big] \\
&=\sum_{\dist_{\phi\xi}(p,h')>C}\mathbb P\big[w^{p'}_n=\phi^{-1}(h')\big] \\
&=\, \mathbb P\big[\dist_{\phi\xi}(p,\phi_\#(w^{p'}_n))>C\big]. \qedhere
\end{align*}
\end{proof}
	 	
The following lemma says that a tame Markov chain has a (small but) positive probability of reaching a given point at distance $d$ within a number of steps linear in $d$.

\begin{lemma}[{\cite[Lemma 2.9]{goldsboroughsisto:markov}}]
\label{lem:get_anywhere}
If $(w^*_n)$ is a tame Markov chain on a finitely generated group $G$, then there are constants $U, \epsilon_0 >0$ such that the following holds. For each $p,q \in G$ with $d_G(p,q)=d$ there exists $t\leq dU$ such that 
$$ \mathbb P [ w^{q}_t=p]\geq \epsilon^{d}_0.$$
\end{lemma}

As a final preliminary lemma, the following will allow us to ``pivot away'' certain quasi-geodesics in $X$ from a given translate $h\rho\gamma$ of $\rho\gamma=E(g)x_0$. The lemma is inspired by \cite[Lemma 9.5]{MathieuSisto} (but the proof is different and simpler).

\begin{lemma}\label{lem:pivot}
For all $\theta$ there exist $s,E$ such that the following holds. Given a $\theta$-quasi-geodesic $\alpha\subset X$ from $px_0$ that passes through $qx_0$, and given $h\in G$, there exists $q'\in G$ with $d_G(p,q')\leq s$ such that 
%$$diam(\pi_{h\gamma}(q'q^{-1}\alpha))\leq E$$
$$d_{h\rho\gamma}(q'q^{-1}\alpha,px_0)\leq E\ \ \ \textrm{and}\ \ \ d_{q'q^{-1}\alpha}(h\rho\gamma,q'x_0)\leq E.$$
\end{lemma}

\begin{proof}
To simplify notation, let us write $\alpha'=pq^{-1}\alpha$. The desired element $q'\in G$ will be of the form $q'=pf$, where $\dist_G(1,f)\le s$. We shall then have $q'q^{-1}\alpha=pfp^{-1}\alpha'$.

Fix any two loxodromic WPD elements $g_1,g_2\in G$ such that $E(g_1)$, $E(g_2)$, and $E(g)=\gamma$ are pairwise distinct, the existence of which is given by the arguments in \cite[Proposition~6]{bestvinafujiwara:bounded} (see also \cite[Corollary~6.12]{dahmaniguirardelosin:hyperbolically}). Observe that all three of these contain $1\in G$. We shall choose $f=g_i^j$ for some $i$ and some bounded $j$. 

At most one direction of $E(g_i)x_0$ can have large coarse intersection with the initial subsegment of $p^{-1}\alpha'$ from $q^{-1}px_0$ to $x_0$, so after inverting the $g_i$, we may assume that the positive direction has small coarse intersection with it. Write $\gamma_i$ for this positive direction of $E(g_i)$. There is some constant $C_0$ such that for any $k,k'\in G$, both $\pi_{k\rho\gamma}(k'\rho\gamma_i)$ and $\pi_{k'\rho\gamma_i}(k\rho\gamma)$ have diameter at most $C_0$; see \cite[Thm~3.9]{AMS:commensurating}, for instance (said reference gives geometric separation, which is equivalent to bounded projections). The conclusion of Lemma~\ref{lem:behrtock_inequality} (the Behrstock inequality) therefore holds, with some constant $B$, for all distinct pairs of translates of $\rho\gamma$, $\rho\gamma_1$, and $\rho\gamma_2$. 

We now show that there exists $C_1$ such that a least one of the $\pi_{p\rho\gamma_i}(\alpha')$ has diameter at most $C_1$. If this were not the case, then there would exist $x_1,x_2\in\alpha'$ such that $\dist_{p\rho\gamma_i}(px_0,x_i)$ were large. By Lemma~\ref{lem:exo_hyperbolicspace}, we then also have that $\dist_X(px_0,x_i)$ is large. Because the $p\rho\gamma_i$ have small coarse intersection with the initial subsegment of $\alpha'$ between $pq^{-1}px_0$ and $px_0$, we can relabel so that $x_1$ lies between $px_0$ and $x_2$. Because the conclusion of Lemma~\ref{lem:behrtock_inequality} holds for $p\rho\gamma_1$ and $p\rho\gamma_2$, the set $\gamma'_p=p\rho\gamma_1\cup p\rho\gamma_2$ is quasiconvex. Lemma~\ref{lem:exo_hyperbolicspace} therefore tells us that any geodesic from $x_1$ to $x_2$ must pass uniformly close to $px_0$. By the Morse lemma, this contradicts the fact that $\alpha'$ is a quasigeodesic.

After relabelling, we therefore have that $\diam_X(\pi_{p\rho\gamma_1}(\alpha'))\le C_1$. Because $pg_1p^{-1}$ acts on $pE(g_1)x_0$ with positive translation length independently of $\alpha$, there is some uniformly bounded $j$ such that the set $\pi_{p\rho\gamma_1}(pg_1^jp^{-1}\alpha')=pg_1^jp^{-1}\pi_{p\rho\gamma_1}(\alpha')$ lies at distance greater than $B$ from $\pi_{p\rho\gamma_1}(h\rho\gamma)$. Setting $q'=pg_1^j$ (i.e. $f=g_1^j$), the desired inequalities follow from the Behrstock inequality, Lemma~\ref{lem:behrtock_inequality}.

\end{proof}

We are now ready to prove the main proposition. Before doing so, we fix various constants and Morse gauges. The dependencies are rather delicate, so we have to be quite careful here. The reader may prefer to skip this and refer back when checking that the various constants and Morse gauges have the claimed properties.

\begin{constantsgauges} \quad

\begin{itemize}
\item   $M_0$ is such that $G$ contains an $M_0$--Morse ray (Assumption~\ref{assumpt:space}.\eqref{item:exist_wpd}).
\item   $\nu$ is the quasi-homogeneity constant (Assumption~\ref{assumpt:markov_chain_quasihomogeneous}).
\item   $M$ is the minimal Morse gauge such that whenever $\phi,\psi$ are bijective $\nu$--quasi-isometries of $G$, any $M_0$--Morse ray gets mapped by $\phi^{-1}\psi$ to an $M$--Morse ray.
\item   $M'$ and $D$ are given by applying Corollary \ref{cor:small_gromov_product_of_different_rays} to the gauge $M$.
\item   $A$ is given by applying Lemma~\ref{lem:homogeneous_morse} to the gauge $\max\{M_0,M,M'\}$.
\item   $U$ and $\epsilon_0$ are as in Lemma \ref{lem:get_anywhere}.
\item   $\theta$ is such that the $X$--orbit of any $\max\{M_0,M,M'\}$--Morse geodesic in $G$ is a $\theta$--quasiconvex $\theta$ quasi-geodesic (Assumption~\ref{assumpt:space}.\eqref{item:morse_detectable}). 
\item   $s$ and $E$ are given by applying Lemma~\ref{lem:pivot} to $\theta$.
\item   $R$ and $B$ are given by applying Lemmas~\ref{lem:exo_hyperbolicspace} and~\ref{lem:behrtock_inequality}, respectively, with quasi-convexity constant $\theta$.
\item   $C_1=A(B+A+D+1)$ and $C_2=A(C_1+A)$.
\item   $d=C_2+\theta(B+E+2R+\theta)$.
\item   $J>0$ is a constant such that for any $h,p\in G$, consecutive points in $(\pi_{h\gamma}w^p_n)$ have distance at most $J$. This exists by tameness of the Markov chain and Lemma \ref{lem:exo_hyperbolicspace}.
\item   $C=d+JU(d+s)$.
\end{itemize}
\end{constantsgauges}

\begin{proof}[Proof of Proposition \ref{prop:small_proj}]
For a contradiction, assume that Proposition~\ref{prop:small_proj} does not hold. Hence there exist $p,h \in G$ and $n \in \mathbb N$ such that: \begin{equation}
\label{eqn:contradiction_assumed}
\mathbb P \Big[d_{h\gamma}(p,w_n^{p})\leq C\Big] \,\le\, \frac13\epsilon_0^{d+s},
\end{equation}
where $C$ is as above. Let $\beta$ be an $M'$-Morse ray issuing from $p$ that satisfies the conclusion of Corollary~\ref{cor:small_gromov_product_of_different_rays}, and let $\alpha$ be an $M_0$-Morse ray issuing from $p$. 

\begin{figure}[ht]
  \includestandalone[width=0.9\textwidth]{TikzMC2directions}%     without .tex extension
  % or use \input{mytikz}
  \caption{For a contradiction, we assume that with overwhelming probability the Markov chain creates a large projection on $hE(g)$. The dotted line is meant to represent the sample path of the Markov chain and $\alpha$, $\beta$ are the two Morse rays with very different Morse gauges.}
  \label{fig:tikz:my}
\end{figure}

The main ingredient in producing a contradiction is the following claim, which states that if \eqref{eqn:contradiction_assumed} holds then in a definite number of steps, the Markov chain $(w^p_n)$ simultaneously travels a long way along two Morse rays that are in very different directions. 

\begin{claim*}
\label{claim:two_diff_directions}
There exists $m \in \mathbb N$ such that there are bijective $\nu$--quasi-isometries $\phi_\alpha,\phi_\beta:G\to G$ with $\phi_\alpha(p)=\phi_\beta(p)=p$ such that 
\[
\mathbb P\big[ d_{\phi_\alpha\alpha}(p, w_m^{p})>C_1\big]\ge\frac23 \quad \text{and} \quad \mathbb P\big[ d_{\phi_\beta\beta}(p, w_m^p) > C_1 \big] \ge\frac23.
\]
\end{claim*}

\begin{subproof}{Proof}
We first fix the value of $m$ to be considered. Let $q\in\beta$ have $\dist_G(p,q)=d$, where $d$ is as above. According to Lemma~\ref{lem:pivot}, there is some $q'\in G$, with $\dist_G(p,q')\le s$, such that $\diam_X(\pi_{h\rho\gamma}(q'q^{-1}\rho\beta))\le E$. Let $p'=q'q^{-1}p$, so that $\dist_G(p',q')=\dist(p,q)=d$.

Because $\dist_G(p,p')\le d+s$, Lemma~\ref{lem:get_anywhere} tells us that there is some $t_\beta\le(d+s)U$ such that $\mathbb P[w^p_{t_\beta}=p']\ge\epsilon_0^{d+s}$. An analogous construction yields a corresponding number $t_\alpha\le(d+s)U$. Observe that, by \eqref{eqn:contradiction_assumed} and the choice of $J$, 
\[
\mathbb P\Big[\dist_{h\gamma}(p,w^p_{n+t_\beta-t_\alpha}) \le C-JU(d+s)=d\Big] \,\le\, \frac13\epsilon_0^{d+s}.
\]
Set $m=n-t_\alpha$. Let us write $\hat n=n+t_\beta-t_\alpha$ and $t=t_\beta$. We have $m=\hat n-t$.

The above probability that $\dist_{h\gamma}(p,w^p_{\hat n})$ is at most $d$ is bounded below by the probability of the Markov chain going to $p'$ in exactly $t$ steps and then making little distance along $h\gamma$ for a further $\hat n-t$ steps. This gives us
\[
\epsilon 
\,\ge\, \mathbb P\Big[w^p_t=p'\Big] \cdot \mathbb P\Big[\dist_{h\gamma}(p,w^{p'}_{\hat n-t}) \le d\Big]
\,\ge\, \epsilon_0^{d+s}\mathbb P\Big[\dist_{h\gamma}(p,w^{p'}_{\hat n-t}) \le d\Big],
\]
from which we deduce that $\mathbb P[\dist_{h\gamma}(p,w^{p'}_{\hat n-t})\le d]\le\frac13$. Since $m=\hat n-t$, we have established that 
\[
\mathbb P\Big[\dist_{h\gamma}(p,w^{p'}_m)>d\Big]\,\ge\,\frac23.
\]

Now let us consider an arbitrary point $k\in G$ satisfying $\dist_{h\gamma}(p,k)>d$. Since $h\rho\gamma$ is a $\theta$--quasigeodesic, Lemma~\ref{lem:exo_hyperbolicspace} tells us that $\dist_{h\rho\gamma}(px_0,kx_0)>\frac{d}\theta-\theta-R$. By the choice of $q'$ via Lemma~\ref{lem:pivot}, we therefore have 
\[
\dist_{h\rho\gamma}(kx_0,q'q^{-1}\rho\beta) \,>\, \frac{d}\theta-\theta-R-E \,>\, B.
\]
By the Behrstock inequality, Lemma~\ref{lem:behrtock_inequality}, we have $\dist_{q'q^{-1}\rho\beta}(h\rho\gamma,kx_0)\le B$. Now, using Lemmas~\ref{lem:pivot} and~\ref{lem:exo_hyperbolicspace}, the fact that $q'q^{-1}\rho\beta$ is a $\theta$--quasigeodesic means that this leads to
\begin{align*}
\dist_{q'q^{-1}\beta}(p',k) \,&\ge\, \dist_{q'q^{-1}\beta}(p',q')-\dist_{q'q^{-1}\beta}(q',k) \\
&\ge\, d - \theta(\dist_{q'q^{-1}\rho\beta}(q'x_0,kx_0)+2R+\theta) \\
&\ge\, d-\theta(B+E+2R+\theta) \,=\, C_2.
\end{align*}
In particular, our previous estimate on the Markov chain implies that
\[
\mathbb P\Big[\dist_{q'q^{-1}\beta}(p',w^{p'}_m)>C_2\Big] \,\ge\,\frac23.
\]

By quasi-homogeneity, Assumption~\ref{assumpt:markov_chain_quasihomogeneous}, there is a bijective $\nu$--quasiisometry $\phi:G\to G$ that respects the Markov chain and has $\phi(p')=p$. Applying Lemma~\ref{lem:homogeneous_morse} with $\xi=q'q^{-1}\beta$, we therefore have
\begin{align*}
\mathbb P\Big[\dist_{\phi q'q^{-1}\beta}(p,w^p_m)>C_1\Big] \,
    &=\, \mathbb P\Big[\dist_{\phi q'q^{-1}\beta}(p,\phi_\#(w^{p'}_m))>C_1\Big] \\
&\ge\, \mathbb P\Big[\dist_{q'q^{-1}\beta}(p',w^{p'}_m)>A(C_1+A)\Big] \\
&\ge\, \mathbb P\Big[\dist_{q'q^{-1}\beta}(p',w^{p'}_m)>C_2\Big] \,\ge\, \frac23.
\end{align*}
The statement follows by letting $\phi_\beta$ be the quasiisometry $\phi q'q^{-1}$ of $X$, which depends on $\beta$ because $q$ and $q'$ do.

\end{subproof}

In view of the claim, there must be some $q\in G$ such that $\dist_{\phi\alpha}(p,q)>C_1$ and $\dist_{\phi\beta}(p,q)>C_1$. Let $\alpha'=\phi_\beta^{-1}\phi_\alpha\alpha$ and $q'=\phi_\beta^{-1}(q)$. Because $\phi_\beta(p)=p$, the first part of Lemma~\ref{lem:homogeneous_morse} tells us that 
\[
\dist_{\alpha'}(p,q') \,>\, \frac1A\dist_{\phi_\alpha\alpha}(p,q)-A \,>\, \frac{C_1}A-A.
\]
The choice of $\beta$ via Corollary~\ref{cor:small_gromov_product_of_different_rays} means that $\diam(\pi_\alpha(\beta))\le D$, so since $p\in\beta$, we have $\dist_{\alpha'}(q',\beta)>B$ by definition of $C_1$. Similarly, $\dist_\beta(q',\alpha)>B$. This contradicts the Behrstock inequality, Lemma~\ref{lem:behrtock_inequality}.

\end{proof}

\section{Linear progress} \label{sec:proof}

Our main theorem on Markov chains is the following.

\begin{theorem}
\label{thm:linear_progress}
Let $G$ be a group acting on a hyperbolic space $X$ with basepoint $x_0$ and satisfying Assumption \ref{assumpt:space}. If $(w_n^{*})_n$ is a tame, quasi-homogeneous Markov chain on $G$, then there exists a constant $C$ such that for all $o\in G$ and $n \in \mathbb N$ we have:
$$ \mathbb P \Big[ d_X(ox_0, w_n^{o}x_0) \geq n/C\big] \,\geq\, 1-Ce^{-n/C}.$$
\end{theorem}

Recall that a Markov chain satisfying the conclusion of the above theorem is said to make \textit{linear progress with exponential decay} in the hyperbolic space $X$.

In this section, we prove Theorem \ref{thm:linear_progress}. The main technical result that we establish in order to do this is Proposition~\ref{prop:axiom} below, the conclusion of which is the same as that of \cite[Proposition~5.1]{goldsboroughsisto:markov}. Theorem~\ref{thm:linear_progress} is a consequence of this by the following.

\begin{theorem}[{\cite[\S6]{goldsboroughsisto:markov}}]
\label{thm:GS_sec6}
Let $G$ be a group acting on a hyperbolic space $X$. Every tame Markov chain on $G$ satisfying the conclusion of \cite[Prop.~5.1]{goldsboroughsisto:markov} makes linear progress in $X$ with exponential decay.
\end{theorem}

Thus, in order to prove Theorem~\ref{thm:linear_progress} it suffices to establish Proposition~\ref{prop:axiom}, because Proposition~\ref{prop:small_proj} shows that, under the assumptions of Theorem~\ref{thm:linear_progress}, the hypotheses of Proposition~\ref{prop:axiom} are met. Before we can state Proposition~\ref{prop:axiom}, we need to introduce some notation.

\subsection{Notation and preliminary lemmas}\label{subsec:linear-progress-prelim-lemmas}
Fix a group $G$ acting on a hyperbolic space $X$, and a basepoint $x_0\in X$. We extend the notation used in Notation \ref{not:projections} with the following lemma, which states that, in the case of an axis of a WPD element, one can perturb closest-point projections to make them satisfy a stronger version of the Behrstock inequality.

\begin{lemma}[{\cite[Thm~4.1]{bestvinabrombergfujiwarasisto:acylindrical}}]
\label{lem:Behrstock_for_gamma}
Let $g\in G$ be a loxodromic WPD. Writing $\gamma=E(g)$, there is a constant $B$ and a $g$-equivariant map $\pi_{\gamma}:G\to \mathcal P(\gamma)$ with the following property, where $\mathcal P(\gamma)$ is the set of all subsets of $\gamma$. For all $x\in G$ and distinct translates $h\gamma \neq h'\gamma$, 
\[ 
\text{if }\, d_X(\pi_{h\gamma}(x),\pi_{h\gamma}(h'\gamma)) >B,
\,\text{ then }\, \pi_{h'\gamma}(x)=\pi_{h'\gamma}(h\gamma),
\]
where we define $\pi_{k\gamma}(z)=k\pi_{\gamma}(k^{-1}z)$. Moreover, for all $x\in G$ the Hausdorff distance between $\pi_\gamma(x)$ and any $X$-projection of $x$ to $\langle g\rangle$ is bounded by $B$.
\end{lemma}

In view of this lemma and with an abuse of notation, for $h,x,y\in G$ we will denote
$$d_{h\gamma}(x,y)=\diam (\pi_{h\gamma}(x)\cup\pi_{h\gamma}(y)),$$ 
where $\pi_{h\gamma}$ is the equivariant map of Lemma~\ref{lem:Behrstock_for_gamma}. 

The following definition captures the set of cosets where two given elements have far away projections.

% \begin{definition}[{\cite[Def.~3.11]{goldsboroughsisto:markov}}]
% Given a loxodromic WPD element $g$, for $x,y\in G$ and $T\geq 0$ we define the set 
% \[ \mathcal{H}_T(x,y) :=\{ h\gamma \hspace{1mm} : \hspace{1mm} d_{h\gamma}(x,y) \geq T\}\]
% \end{definition}

\begin{definition}[{\cite[Def. 3.11]{goldsboroughsisto:markov}}]
\label{defn:H_T}
Let $g$ be a loxodromic WPD element of a group $G$ acting on a hyperbolic space. Given $x,y\in G$ and $T\ge0$, we write
\[
\mathcal H_T(x,y) \,=\, \{hE(g)\,:\,\dist_{hE(g)}(x,y)\ge T\}.
\]
With $g$ fixed, let $o,p,x,y \in G$. We define the following ``distance formula'' expression:
\[
\sum_{\mathcal{H}_T(o,p)}[x,y] \,\coloneq\, \sum_{\substack{hE(g) \,\in\, \mathcal{H}_T(o,p) \\ \pi_{hE(g)}(x) \,\neq\, \pi_{hE(g)}(y)}} d_{hE(g)}(x,y).
\]
\end{definition}

\begin{remark}[{\cite[Rem.~3.12]{goldsboroughsisto:markov}}]
 \label{rem:triangle_inequality}
Since projection distances satisfy the triangle inequality, so do these distance-formula expressions. That is, for all $o,p,x,y,z, \in G$ we have
\[
\sum_{\mathcal{H}_T(o,p)} [x,z] \,\leq \sum_{\mathcal{H}_T(o,p)} [x,y]  \,+ \sum_{\mathcal{H}_T(o,p)} [y,z].
\] 
\end{remark}

We can now state the main result of this section, the conclusion of which is the same as that of \cite[Prop.~5.1]{goldsboroughsisto:markov}. As discussed, Theorem~\ref{thm:linear_progress} follows from it, Theorem~\ref{thm:GS_sec6}, and Proposition~\ref{prop:small_proj}.

\begin{proposition}
\label{prop:axiom}
Let $G$ be a group acting on a hyperbolic space $X$ and fix a basepoint $x_0\in X$. Suppose that the conclusion of Proposition \ref{prop:small_proj} holds for some loxodromic WPD $g\in G$. There exist $T_0,C'$ such that the following holds for each $T\geq T_0$. For all $o,p\in G$, $n \in \mathbb{N}$, and $t>0$, 
$$
\mathbb{P}\Big[\exists r\leq n \,:\, \sum_{\mathcal{H}_T(o,p)} [p,w^p_r] \geq t \Big] \leq 2e^{-t/C'}.
$$
\end{proposition}

Before proceeding to the proof, we collect two more preliminary lemmas that are needed in order to choose the constant $T_0$.

\begin{lemma}[{\cite[Thm~4.1]{bestvinabrombergfujiwarasisto:acylindrical}, \cite[Thm~3.3(G)]{bestvinabrombergfujiwarasisto:acylindrical}}]
\label{lem:linearorder}
Fix a loxodromic WPD element $g\in G$, and let $o,p\in G$. Write $\gamma=E(g)$. For any sufficiently large $T$, the set $\mathcal{H}_T(o,p) \cup \{o\gamma,p\gamma\}$ is totally ordered with least element $o\gamma$ and greatest element $p\gamma$. The order is given by $h\gamma \prec h'\gamma$ if any one of the following equivalent conditions holds, where $B$ is as in Lemma~\ref{lem:Behrstock_for_gamma}.
\begin{itemize}
    \item $d_{h\gamma}(o,h'\gamma) > B$.
    \item $\pi_{h'\gamma}(o)= \pi_{h'\gamma}(h\gamma)$.
    \item $d_{h'\gamma}(p,h\gamma) > B$.
    \item $\pi_{h\gamma}(p)= \pi_{h\gamma}(h'\gamma)$.
\end{itemize}
\end{lemma}

\begin{lemma}[{\cite[Lem.~3.14]{goldsboroughsisto:markov}}]
\label{lem:GS_DF_lower_bound}
Let $g\in G$ be a loxodromic WPD and let $x_0\in X$. For all sufficiently large $T$, we have the following for all $a,b\in G$:
\[
\dist_X(ax_0,bx_0) \,\ge\, \frac12\sum_{\mathcal H_T(a,b)}[a,b].
\]
\end{lemma}

\subsection{Proof of Proposition~\ref{prop:axiom}}

Fix an element $g\in G$ satisfying the conclusion of Proposition~\ref{prop:small_proj}. Throughout the proof, we write $\gamma=E(g)$.
\begin{constants} \quad 
\begin{itemize} 
\item   $B$ is given by Lemma~\ref{lem:Behrstock_for_gamma}.
\item   $T'\ge10B$ is sufficiently large that the conclusions of Lemmas~\ref{lem:linearorder} and~\ref{lem:GS_DF_lower_bound} hold.
\item   $L=L(T')$ is given by \cite[Lem.~3.16]{goldsboroughsisto:markov}. 
\item   $J$ is such that for any $p,h\in G$, consecutive points of both $(w^p_nx_0)$ and $(\pi_{h\gamma}w^p_n)$ have distance at most $J$. This exists by tameness, Definition \ref{defn:tame}.
\item   $C$ and $\epsilon$ are as in Proposition \ref{prop:small_proj}. We can and will assume that $C>B$.
\item   $T_0=\max\{T',2(C+B+J+1)\}$.
\item   $D=T_0+LJ+L+C$.
\end{itemize}
\end{constants}

Fix $o,p, q\in G$ and $n \in \mathbb N$. For ease of notation, given $t>0$ we define 
\[
g(t)= \mathbb{P}\Big[\exists r\leq n : \sum_{\mathcal{H}_T(o,p)} [p,w^p_r] \geq t \Big], \quad\text{ and }\quad 
f(t)= \mathbb P\Big[\sum_{\mathcal{H}_T(o,p)} [p,w^p_n] \geq t \Big].
\]
Thus our goal is to find a constant $C_2$, independent of $t$, such that $g(t)\le C_2e^{-t/C_2}$. Fix $T\ge T_0$.

The main technical step in our proof of Proposition~\ref{prop:axiom} is the following, which relates the probability functions $f(t)$ and $g(t)$.

\begin{lemma}
\label{lem:bound_on_g(t)}
For all $t\geq D$, we have $\epsilon g(t) \leq f(t-D)-f(t+D)$. 
\end{lemma}

\begin{proof}
For ease of notation, set $D'=LJ+L+2C+2B$. For $t\ge D$, let $\mathcal A^t$ denote the set of all $x\in G$ such that $\sum_{\mathcal H_T(o,p)}[p,x]\ge t-D'$ and $d_{h\gamma}(p,x) \geq 2C$ and $d_{h\gamma}(o,x) \geq 2C$, where $h\gamma \in \mathcal H_T(o,p)$ is the minimal coset contributing to the sum $\sum_{\mathcal H_T(o,p)}[p,x]$, with respect to the linear order from Lemma \ref{lem:linearorder} for $\mathcal H_T(o,p)$. For $k \leq n$ and $x\in G$, let $\mathcal A^{t}_{k,x}$ denote the event 
\begin{displayquote}
``\emph{We have $w^{p}_k=x$ and $x\in\mathcal A^t$ and $\ \forall i<k \ :w_i^p \not\in \mathcal A^t$.}''
\end{displayquote}

That is, $\mathcal A^{t}_{k,x}$ is the event that $w_k^p=x$ is the first time that $w_k^p \in \mathcal A^t$.

\begin{claim}
\label{claim:definite_proba}
For any $k\leq n$, $x\in G$, and $t\geq D$, we have
$$
\mathbb P\left[\sum_{\mathcal{H}_T(o,p)} [p,w^p_n] \in  [t-D, t+D]  \,\Big\vert\, \mathcal A_{k,x}^t \right] \,\ge\, \mathbb P \big[d_{h_1\gamma}(x,w_{n-k}^x) \leq C \big]. 
$$
\end{claim}

\begin{subproof}{Proof of Claim}
We fix $k \le n$, $x\in G$ and $t\geq D$ and assume that $\mathcal A^t_{k,x}$ holds. Let $h\gamma$ be the minimal coset, with respect to the linear order from Lemma \ref{lem:linearorder} for the set $\mathcal H_T(o,p)$ that contributes to the sum $\sum_{\mathcal H_T(o,p)}[p,w_k^p]$. 

Therefore  \begin{equation}
\label{eqn:minimal_contributor}
    \pi_{h\gamma}(p) \neq \pi_{h\gamma}(w_k^p)\quad \text{and} \quad \pi_{h'\gamma}(p)=\pi_{h'\gamma}(w_k^p) \quad \forall h'\gamma \prec h\gamma.
\end{equation}

By the Behrstock inequality (Lemma~\ref{lem:Behrstock_for_gamma}), any point $y\in G$ with $\dist_{h\gamma}(p,y)>B$ has $\pi_{h''\gamma}(y)=\pi_{h''\gamma}(h\gamma)=\pi_{h''\gamma}(o)$ for all $h''\gamma\succ h\gamma$. Similarly, any point $y\in G$ with $\dist_{h\gamma}(o,y)>B$ has $\pi_{h'\gamma}(y)=\pi_{h'\gamma}(h\gamma)=\pi_{h'\gamma}(p)$ for all $h'\gamma\prec h\gamma$ (and in $\mathcal H_T(o,p)$). Hence, if $\dist_{h\gamma}(w_k^p,w^p_{n})\le C$, then $\sum_{\mathcal H_T(o,p)}[w_k^p,w^p_{n}]=\dist_{h\gamma}(w_k^p,w^p_{n})\le C$ as $w_k^p \in \mathcal A^t$. Hence if $\mathcal A^t_{k,x}$ holds and if $d_{h\gamma}(w_k^p, w_n^p) \leq C$ then 
\[
\sum_{\mathcal{H}_T(o,p)} [p,w^p_{n}] \,\geq 
    \sum_{\mathcal H_T(o,p)}[p,w_k^p \big] \,-\sum_{\mathcal H_T(o,p)}[w_k^p,w_{n}^p \big] 
    \,\ge\, t-D'-C \,\ge\, t-D,
\]
by the triangle inequality (Remark~\ref{rem:triangle_inequality}).

Now, by the triangular inequality (Remark \ref{rem:triangle_inequality}) we also have that if $d_{h\gamma}(w_k^p, w_n^p) \leq C$, then

$$\sum_{\mathcal H_T(o,p)}[p,w_n^p] \le \sum_{\mathcal H_T(o,p)}[p,w_k^p]+C.$$

Therefore, in order to complete the proof of the claim we need to bound $\sum_{\mathcal H_T(o,p)}[p,w_k^p]$.

\begin{subclaim}
\label{subclaim:not_that_big_sum}
    $\sum_{\mathcal H_T(o,p)}[p,w_k^p] \leq t-D'+4C+2J.$
\end{subclaim}

\textit{Proof of Subclaim \ref{subclaim:not_that_big_sum}} Let $h_1\gamma$ be the second minimal coset (with respect to the linear order from Lemma \ref{lem:linearorder} for $\mathcal H_T(o,p)$) contributing to the sum $\sum_{\mathcal H_T(o,p)}[p,w_k^p]$. Let $r < k$ be the last time such that $d_{h_1\gamma}(o,w_r^p)> 2C$. Then we have that $d_{h_1\gamma}(p,w_r^p) \geq d_{h_1\gamma}(o,p)-d_{h_1\gamma}(o,w_{r+1}^p)-d_{h_1\gamma}(w^p_{r+1},w_r^p) \geq T-2C-J >2C$. Therefore, $\sum_{\mathcal H_T(o,p)}[p,w_r^p]<t-D'$ otherwise $w_r^p \in \mathcal A^t$ contradicting the minimality of $k$. By the strong Behrstock inequality, we  have that $\pi_{h'''\gamma}(o)=\pi_{h'''\gamma}(w_r^p)=\pi_{h'''\gamma}(w_k^p)$ for all $h'''\gamma \succ h_1\gamma$. We also have that $d_{h\gamma}(o,w_r^p) >B$ and hence $\pi_{h'\gamma}(p)=\pi_{h'\gamma}(w_r^p)=\pi_{h'\gamma}(w_k^p)$ for all $h'\gamma \prec h\gamma$. As $d_{h_1\gamma}(o,w_r^p) >B$ and $h\gamma \prec h_1\gamma$ we have that $\pi_{h\gamma}(p)=\pi_{h\gamma}(w_r^p)$.

Therefore, 
\begin{align*}
    \begin{split}
        \sum_{\mathcal H_T(o,p)}[p,w_k^p] &\le \sum_{\mathcal H_T(o,p)}[p,w_r^p]+\sum_{\mathcal H_T(o,p)}[w_r^p,w_r^p] \\
        &\leq t-D'+d_{h\gamma}(w_r^p, w_k^p)+d_{h_1\gamma}(w_r^p, w_k^p) \\
        &\leq t-D'+d_{h\gamma}(w_r^p, w_k^p)+d_{h_1\gamma}(w_{r+1}^p, w_r^p)+d_{h_1\gamma}(w_{r+1}^p, w_k^p) \\
        &\leq t-D'+d_{h\gamma}(p, w_k^p)+J+2C.
    \end{split}
\end{align*}

By definition of $\mathcal A_{k,x}^t$, there is a last time $s<k$ such that $d_{h\gamma}(p,w_s^p) \le 2C$. If $s+1=k$ then $d_{h\gamma}(p, w_k^p) \leq d_{h\gamma}(p,w_s^p)+J \leq 2C+J$ and this proves the subclaim.

If $s<k-1$ then let $u\in (s+1, k)$ be the last time such that $\sum_{\mathcal H_T(o,p)}[p,w^p_u] <t-D'$. 
If $u+1=k$ then $$\sum_{\mathcal H_T(o,p)}[p,w^p_k] \leq \sum_{\mathcal H_T(o,p)}[p,w^p_u]+\sum_{\mathcal H_T(o,p)}[w^p_u,w^p_{u+1}] \leq t-D'+J $$ and we have proved the subclaim. If not, then we consider $w_{i}^p$, for $i=k-1$ or $i=u+1$ (these might be equal but that's fine), we have that $k>i >u$. By definition of $w_k^p$, we must have $w_{i}^p \not\in A^t$, but $i >\max\{s,u\}$ hence $\sum_{\mathcal H_T(o,p)}[p,w^p_{i}]\geq t-D'$ and $d_{h\gamma}(p,w_{i}^p) \geq 2C$. Therefore $d_{h\gamma}(o,w_{i}^p) <2C$ for $i=k-1$ or $i=u+1$. Now, by the fact that the projections of $w_k^p$ and $w_u^p$ coincide on cosets which are not $h\gamma$ and by the triangular inequality (Remark \ref{rem:triangle_inequality}), we get 
\begin{align*}
    \begin{split}
        \sum_{\mathcal H_T(o,p)}[p,w_k] & \le \sum_{\mathcal H_T(o,p)}[p,w^p_u]+\sum_{\mathcal H_T(o,p)}[w^p_u,w^p_k] \\
        &\leq t-D'+d_{h\gamma}(w^p_u,w^p_k) \\
       & \leq t-D'+d_{h\gamma}(w^p_u,w^p_{u+1})+d_{h\gamma}(w^p_{u+1},o)+d_{h\gamma}(o,w^p_{k-1})+d_{h\gamma}(w^p_{k-1},w^p_{k}) \\
        &< t-D'+J+2C+2C+J
    \end{split}
\end{align*}
and this proves the subclaim.
$\blacklozenge$

Therefore, by Subclaim \ref{subclaim:not_that_big_sum}, if $A_{k,x}^t$ holds then the event  "$d_{h\gamma}(w_k^p, w_{n-k}^p) \leq C$" implies that $$\sum_{\mathcal H_T(o,p)}[p,w^p_n] \leq  \sum_{\mathcal H_T(o,p)}[p,w^p_k]+\sum_{\mathcal H_T(o,p)}[w_k^p,w^p_n] \leq t-D'+4C+2J+C \leq t+D. $$

Combining this with the result obtained earlier (above the subclaim), we get that 
\begin{align*}
    \begin{split}
      \mathbb P \Big[ \sum_{\mathcal H_T(o,p)}[p,w_n^p] \in[t-D, t+D] \vert A_{k,x}^t\Big] &= \mathbb P \Big[ d_{h\gamma}(w_k^p, w_n^p) \le C \vert A_{k,x}^t\Big] \\
      &=\mathbb P \Big[ d_{h\gamma}(x, w_{n-k}^x) \le C\Big]
    \end{split}
\end{align*}
by the strong Markov property \cite[Lemma 2.2]{goldsboroughsisto:markov}.
\end{subproof}

Since the conclusion of Proposition~\ref{prop:small_proj} holds for $g$, with $\gamma=E(g)$, we have $\mathbb P[\dist_{h_1\gamma}(x,w^x_{n-k})\le C]>\epsilon$. Claim \ref{claim:definite_proba} above and the law of total probability therefore gives us
\begin{align*}
f(t-D)&-f(t+D) \,=\, \mathbb P\Big[\sum_{\mathcal{H}_T(o,p)} [p,w^p_n] \in  [t-D, t+D] \Big] \\
&\geq\, \sum_{k\le n}\sum_{x\in G}\Big( 
    \mathbb P\Big[\sum_{\mathcal{H}_T(o,p)} [p,w^p_n] \in  [t-D, t+D]  \Big\vert \mathcal A_{k,x}^t \big] 
    \cdot \mathbb P\big[\mathcal A_{k,x}^t \big]\Big) \\ 
&>\, \epsilon \sum_{k \leq n}\sum_{x \in G} \mathbb P \left[\mathcal A_{k,x}^t  \right] \\ 
&\ge\, \epsilon\mathbb P \big[\exists k \leq n : w_{k}^p \in \mathcal A^t \big].
\end{align*}
To complete the proof of the lemma, we show that $g(t) \leq \mathbb P \big[\exists k \leq n : w_{k}^p \in \mathcal A^t\big]$. For this, suppose that the defining event of $g(t)$ holds. That is, suppose that there exists $r\le n$ such that $S=\sum_{\mathcal H_T(o,p)}[p,w^p_r]\ge t$. Let $h\gamma,h'\gamma\in\mathcal H_T(o,p)$ be the minimal and second-minimal elements contributing to the sum $S$, respectively. Note that $\pi_{h''\gamma}(o)=\pi_{h''\gamma}(w^p_r)$ for every $h''\gamma\succ h\gamma$.

If $\dist_{h\gamma}(p,w^p_r)\ge2C$ and $\dist_{h\gamma}(o,w^p_r)\ge2C$, then $w^p_r\in\mathcal A^t$. In particular, there exists $k\le n$ such that $w^p_k\in\mathcal A^t $ and we are done. 

Otherwise, $\dist_{h\gamma}(p,w^p_r)<2C$ or $\dist_{h\gamma}(o,w^p_r)<2C$. First, say that $\dist_{h\gamma}(p,w^p_r)<2C$. Since $h\gamma$ contributes to the sum $S$, we have $\pi_{h\gamma}(p)\ne\pi_{h\gamma}(w^p_r)$, so by Lemma~\ref{lem:Behrstock_for_gamma} we must have $\dist_{h'\gamma}(o,w^p_r)\le B$, and in particular $\dist_{h'\gamma}(p,w^p_r)>B$. Because $T>2(C+B+J)$, there is a maximal $r'\leq r$ such that $\dist_{h'\gamma}(o,w^p_{r'})>2C>B$. 

By the choice of $J$, we have $\dist_{h'\gamma}(p,w^p_{r'})>2C>B$. By the assumption that $\dist_{h\gamma}(p,w^p_r)<2C$, we have $\dist_{h\gamma}(o,w^p_r)>B$. By applying Lemma~\ref{lem:behrtock_inequality} using these various estimates, we can use the definition of the linear order on $\mathcal H_T(o,p)$ to obtain:
\begin{itemize}
\item   $\pi_{h^+\gamma}(w^p_r)=\pi_{h^+\gamma}(o)$ for all $h^+\gamma\succ h'\gamma$;
\item   $\pi_{h^-\gamma}(w^p_{r'})=\pi_{h^-\gamma}(p)$ for all $h^-\gamma\prec h'\gamma$;
\item   $\pi_{h^+\gamma}(w^p_{r'})=\pi_{h^+\gamma}(o)$ for all $h^+\gamma\succ h'\gamma$;
\item   $\pi_{h''\gamma}(w^p_r)=\pi_{h''\gamma}(p)$ for all $h''\gamma\prec h\gamma$.
\end{itemize}
In particular, $h'\gamma$ is the minimal element of $\mathcal H_T(o,p)$ that contributes to $\sum_{\mathcal H_T(o,p)}[p,w^p_{r'}]$, and we have $\dist_{h'\gamma}(p,w^p_{r'})>2C$ and $\dist_{h'\gamma}(o,w^p_{r'})>2C$. Moreover, Remark~\ref{rem:triangle_inequality} lets us compute
\begin{align*}
\sum_{\mathcal H_T(o,p)}[p,w^p_{r'}] \,
    &\ge\, \sum_{\mathcal H_T(o,p)}[p,w^p_r] \,-\, \sum_{\mathcal H_T(o,p)}[w^p_r,w^p_{r'}] \\
&\ge\, t-\big(\dist_{h\gamma}(w^p_r,w^p_{r'})+\dist_{h'\gamma}(w^p_r,w^p_{r'})\big) \\
&\ge\, t-\dist_{h\gamma}(w^p_r,p)-\big(\dist_{h'\gamma}(w^p_r,o)+\dist_{h'\gamma}(o,w^p_{r'})\big) \\
&\ge\, t-2C-B-(B+J) \,\ge\, t-D'.
\end{align*}
We have shown that $w^p_{r'}\in\mathcal A^t$. Thus, whenever the defining event of $g(t)$ holds, there is some $k\le n$ such that $w^p_k\in\mathcal A^t$. 

If, instead, it is the case that $d_{h\gamma}(o,w_r^p) <2C $ then let $r'$ be the last time that $d_{h\gamma}(o,w_{r'}^p) \geq 2C$. Then we have that $d_{h\gamma}(o,w_{r'}^p) \leq 2C+J$ by the choice of $J$, hence $d_{h\gamma}(p,w_{r'}^p) \geq d_{h\gamma}(p,o)-d_{h\gamma}(o,w_{r'}^p) \geq 2C$ by the choice of $T$. Further, by Remark \ref{rem:triangle_inequality} we have $$ \sum_{\mathcal H_T(o,p)}[p,w_{r'}^p] \geq \sum_{\mathcal H_T(o,p)}[p,w_{r}^p]-d_{h\gamma}(w_r^p, w_{r'}^p) \geq t-(6C+J) \geq t-D'.$$
Thus, $w_{r'}^p \in \mathcal A^t $. Hence, we have shown that the defining event of $g(t)$ implies that there exists a $k \leq n$ such that $w_k^p \in \mathcal A^t$. This completes the proof of the lemma.
\end{proof}  

We have shown that the probability $g(t)$ is bounded above by a probability $f(t-D)$. In order to prove Proposition \ref{prop:axiom} it suffices to show that the function $f$ decays exponentially.

\begin{proof}[Proof of Proposition \ref{prop:axiom}]
By definition, $g(t) \geq f(t) \geq f(t+D)$ for all $t$. Therefore for $t\geq D$, Lemma~\ref{lem:bound_on_g(t)} gives us $f(t-D)-f(t+D) \geq \epsilon g(t) \geq \epsilon f(t+D)$. Hence $f(t+D) \leq \frac{f(t-D)}{1+\epsilon}$. For simplicity, set $\epsilon_2=\frac1{1+\epsilon}<1$. Rephrasing this, for every $s\ge0$ we have $f(s+2D)\le\epsilon_2f(s)$. If we write $s=2qD+r$ with $q\in\mathbb N$ and $r\in[0,2D)$, then iterating this estimate yields 
\[
f(s) \,\le\, \epsilon_2^qf(r) \,\le\, \epsilon_2^q \,=\, \epsilon_2^{\frac{s-r}{2D}} 
    \,\le\, \frac1{\epsilon_2}\epsilon_2^\frac s{2D} \,=\, (1+\epsilon)\epsilon_2^{\frac s{2D}}.
\]
Changing the base of the exponential completes the proof, with $C'=\frac{2D}{\log(1+\epsilon)}$.

\end{proof}

\section{Descent of quasi-isometries in HHSs} \label{sec:descent}

This section and the next one concern hierarchically hyperbolic spaces and groups.  For more detailed background on hierarchical hyperbolicity the reader is directed to any of several expository treatments; see e.g. \cite{sisto:what} for a detailed conceptual explanation of the definition or \cite[Part 2]{casalsruizhagenkazachkov:real} for a technical overview.  For present purposes, we refer the reader to \cite[Def. 1.1]{behrstockhagensisto:hierarchically:2} for the definition of a hierarchically hyperbolic space (HHS).  

We will sometimes require the following properties of an HHS $(X,\s)$:
\begin{itemize}
    \item $(X,\s)$ is \emph{normalised} if all maps $\pi_U:X\to\mathcal CU$ to the various hyperbolic spaces $\mathcal CU$ are uniformly coarsely surjective; see \cite[Rem. 1.3]{behrstockhagensisto:hierarchically:2} for why this can always be assumed.

    \item $(X,\s)$ has the \emph{bounded domain dichotomy} if there is a constant $B$ such that $\diam(\mathcal CU)\leq B$ for any $U\in\s$ with $\mathcal CU$ bounded.  For this and the next definition, see also \cite[Sec. 3]{abbottbehrstockdurham:largest}.

    \item $(X,\s)$ has \emph{unbounded products} if it has the bounded domain dichotomy and for all $U\in\mathfrak S\smallsetminus\{S\}$ such that $\C U$ is unbounded, there exists $V\in\s$ such that $U\orth V$ and $\C V$ is unbounded. (Recall that $S$ denotes the unique $\nest$--maximal element of $\s$.)
\end{itemize}

Note that every HHG satisfies the bounded domain dichotomy, as there are finitely many isometry classes of domains. Other hierarchical hyperbolicity notions will be used incidentally and we will refer to the relevant literature as we go.

Two key notions for us are \emph{standard product regions} and \emph{factored spaces}. The standard product region $P_U$ associated to a domain $U$ is a subspace of the HHS under consideration which is naturally quasi-isometric to a product $F_U\times E_U$, where moving in the $F_U$ (resp. $E_U$) factor only changes projections to domains nested into (resp. orthogonal to) $U$; see \cite[Sec. 5]{behrstockhagensisto:hierarchically:2} and \cite[Sec. 15]{casalsruizhagenkazachkov:real}. Similarly, there are product regions associated to collections of pairwise orthogonal domains (which is the coarse intersection of the various standard product regions). 

Regarding factored spaces, the idea is that starting with an HHS, we can cone off standard product regions and obtain another HHS, where the associated index set is obtained from the original index set by removing all indices ``relevant'' for the product regions we coned-off (and the rest of the structure, in particular the hyperbolic spaces, remains the same). In particular, the new HHS is somewhat ``simpler''. We now make this more precise.

Let $(X,\dist,\s)$ be an HHS and let $\V\subset\s$ be downwards-closed in the nesting poset $(\s,\nest)$. In \cite{behrstockhagensisto:asymptotic}, a metric $\dist^\V\le\dist$ is constructed on $X$ such that $(X,\dist^\V,\s\smallsetminus\V)$ is an HHS. We call $\dist^\V$ the \emph{factored metric} and $(X,\dist^\V)$ the \emph{factored space} of $(\dist,\V)$. HHS automorphisms descend to factored spaces \cite[Proposition 19.1]{casalsruizhagenkazachkov:real}.

\begin{defn}\label{defn:orth-graph}
Given an HHS $(X,\s)$, let $\Gamma_{\s}$ be the graph with a vertex for each element of $\s$ and an edge joining $U$ to $V$ whenever $U\bot V$ and both $\C U$ and $\C V$ are unbounded.  
\end{defn}

\begin{remark}[Arranging unbounded products]\label{rem:gamma-bdd}
If $(X,\s)$ has the bounded domain dichotomy, then $\Gamma_{\s}$ has an edge whenever $\C U$ and $\C V$ have diameter more than $B$ and $U\bot V$. We mainly use the bounded domain dichotomy indirectly, via the construction in \cite[Sec. 3]{abbottbehrstockdurham:largest}: if $(X,\s_0)$ is an HHS with the bounded domain dichotomy, then there is an HHS structure $(X,\s)$ with \emph{unbounded products}. 
\end{remark}

The following proposition says roughly that quasi-isometries of HHSs descend to quasi-isometries of the factored spaces where we cone-off the product regions with the maximal number of factors. A clique in a graph is said to be \emph{largest} if its cardinality agrees with the clique number of the graph.

\begin{proposition}[{\cite[Cor.~6.3]{behrstockhagensisto:quasiflats}}] \label{prop:qis_descend_once:biinfinite}
Let $(X,\dist,\s)$ be an  HHS with the bounded domain dichotomy, and let $\V$ be the downwards-closure of the union of all largest cliques of $\Gamma_\s$. If there is some $D$ such that for each $\nest$--maximal $U\in\V$, every pair of points in $F_U$ lies on some biinfinite $D$--quasigeodesic of $F_U$, then every quasiisometry of $(X,\dist)$ is a quasiisometry of $(X,\dist^\V)$.
\end{proposition}

After applying the above proposition, we have an HHS $(X,\dist^\V,\s\smallsetminus\V)$.  Since $\V$ contained all $U\in\s$ belonging to a largest clique in $\Gamma_{\s}$, the maximum size of a clique in $\Gamma_{\s\smallsetminus\V}$ is strictly smaller than in $\Gamma_{\s}$, while the bounded domain dichotomy persists since the cone-off construction does not modify the $\C U$.

Applying the proposition repeatedly, we therefore obtain HHSs where the maximal number of factors of a product regions decreases until there are no products with at least two factors left. The space obtained in this way is an HHS $\widehat X$ that is hyperbolic in view of \cite[Cor. 2.16]{behrstockhagensisto:quasiflats}.  Moreover, quasi-isometries of the original HHS descend to this HHS. 

However, this hyperbolic space is not necessarily the maximal hyperbolic space associated to the original HHS. This is explained more concretely in the following remark.

\begin{remark}\label{rem:factored-space-not-CS}
Let $S\in\s$ be the maximal element.  By \cite[Cor. 2.9]{behrstockhagensisto:asymptotic}, the maximal hyperbolic space $\C S$ is naturally quasi-isometric to the factored space $(X,\dist^{\s-\{S\}})$.  This is in general not naturally quasi-isometric to $\widehat X$, as the HHS structure of the latter contains more domains than just $S$.

%First, observe why we did not continue the iterative construction of $\widehat X$ until $1$--cliques (i.e. vertices) were eliminated: if $\C S$ is unbounded, that would make $\widehat X$ bounded.  On the other hand, if the orthogonality graph for $\widehat X$ has isolated vertices corresponding to unbounded $U\propnest S$, then the natural map $\widehat X\to \C S$ is not a quasi-isometry.  But since at each stage we remove cliques whose cardinality corresponds exactly to the quasi-flat rank, we may well create isolated vertices in the orthogonality graph at the next stage.  This happens even if the original HHS had unbounded products, and even in the mapping class group.

To illustrate this, skipping all technical details but hopefully still conveying the picture, consider the group
$$G=(\mathbb Z^2\ast\mathbb Z)\times\mathbb Z.$$
Notice that $G$ has a splitting with two vertex groups, a copy of $\mathbb Z^3$ and a copy of $\mathbb Z$, and two edge groups with vertex group $\mathbb Z$, one edge being a loop while the other connects the two vertices. The first step of the procedure cones-off quasiflats of rank 3, resulting in a space quasi-isometric to the Bass-Serre tree $T$ for the aforementioned splitting; in particular this is already a hyperbolic space. The action on $T$ is not acylindrical, so $T$ is not the top-level hyperbolic space for any HHG structure on $G$.  

To illustrate this in a different way, we describe a factored space on the mapping class group that, whilst not giving exactly $\widehat X$, gives the idea of how it can differ from $\C S$, where $S$ is a closed connected oriented surface of genus at least 3. Namely, let $\U$ be the set of subsurfaces that are neither $S$ nor the complement of a curve. Then $\Gamma_{\s\smallsetminus\U}$ has no edges, so $(X,\dist^\U)$ is hyperbolic. But it is not naturally quasiisometric to $\C S$, and it has loxodromic isometries that are not pseudo-Anosovs of $S$, namely partial pseudo-Anosovs supported on the complement of a single curve. The iterative construction of $\widehat X$ gives an even ``bigger'' space than $(X,\dist^\U)$ as above, meaning that $\widehat X$ has an HHS structure containing more domains than $S$ and complements of single curves. For instance, complements of four-holed spheres are also part of the HHS structure for $\widehat X$ (four-holed spheres get ``removed'' at the first iteration of the construction, so after then their complements are never involved in quasiflats of dimension at least 2).

More abstractly, Figure~\ref{fig:coning} shows how applying the iterative procedure to $\s$ can result in $\widehat X$ having a nontrivial HHS structure.

\begin{figure}[ht]
\includegraphics[height=2cm,trim = 0mm 5mm 0mm 5mm]{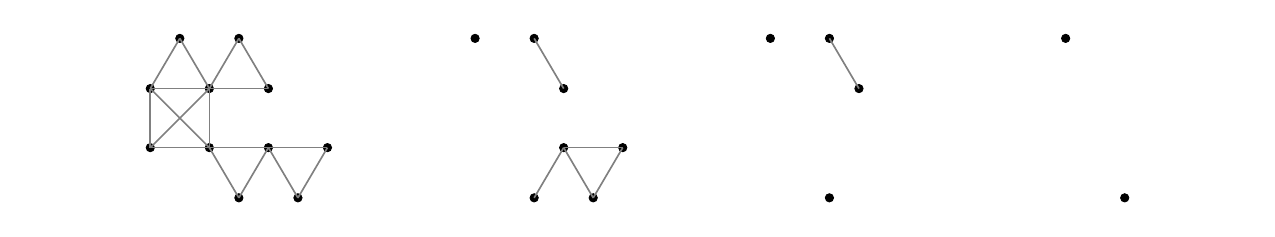}\centering
\caption{If the leftmost graph is part of $\Gamma_\s$ (with non-adjacent vertices transverse), then it yields the second graph in $\Gamma_{\s\smallsetminus\V}$, which is obtained by applying Proposition~\ref{prop:qis_descend_once:biinfinite}. Repeating yields the third graph, and we terminate with the fourth graph, which has vertices representing non-maximal unbounded domains. Hence $\widehat X$ is not quasiisometric to $\C S$ in this case.} \label{fig:coning}
\end{figure}
\end{remark}

We record the iterative construction described above here:

\begin{lemma} \label{lem:qi_descends_to_fake_curve_graph}
Let $(X,\dist,\s)$ be an HHS with the bounded domain dichotomy and the following property: if $F_U$ is unbounded then every pair of points of $F_U$ lies on a uniform biinfinite quasigeodesic of $F_U$. There is a downwards-closed set $\U\subset\s-\{S\}$ such that $(X,\dist^\U)$ is hyperbolic and every quasiisometry of $(X,\dist)$ is a quasiisometry of $(X,\dist^\U)$.
\end{lemma}

Note that, in view of the HHS structure on $(X,\dist^\U)$, the map $\pi_S:X\to \mathcal C S$ is still coarsely Lipschitz and coarsely surjective when $X$ is endowed with the metric $\dist^\U$.

\begin{proof}
Let $\omega$ be the clique number of $\Gamma_\s$. Let $\s^0=\s$, $\dist^0=\dist$. Given $\s^i$ and $\dist^i$, let $\cal U^i$ be the downwards-closure of the union of all maximal cliques of $\Gamma_{\s^i}$. Let $\dist^{i+1}$ be the factored metric of $(\dist^i,\U^i)$, and let $\s^{i+1}=\s^i\smallsetminus\U^i$. Because $(X,\dist^i,\s^i)$ is an HHS with the bounded domain dichotomy, so is $(X,\dist^{i+1},\s^{i+1})$. Moreover, the clique number of $\Gamma_{\s^i}$ is at most $\omega-i$. Thus there is some minimal $n<\omega$ such that $\Gamma_{\s^n}$ has no nontrivial cliques. Let $\cal U=\s\smallsetminus\s^n$. By Proposition~\ref{prop:qis_descend_once:biinfinite}, every quasiisometry of $(X,\dist)$ is a quasiisometry of every $(X,\dist^i)$, in particular of $(X,\dist^\U)=(X,\dist^n)$.
\end{proof}

From now on, $\U$, $\U^i$, and $\s^i$ denote the subsets of $\s$ constructed in proving Lemma~\ref{lem:qi_descends_to_fake_curve_graph}.

\begin{remark} \label{rem:sufficient_descend}
The proof of Lemma~\ref{lem:qi_descends_to_fake_curve_graph} did not need every $U\in\s$ to have the property that each pair of points in $F_U$ lies on a uniform quasigeodesic: it only needs this to hold for those $U$ that are $\nest$--maximal in some $\U^i$. Moreover, it does not need to hold for the full space $F_U$ obtained by considering $U\in\s$, only  the  $F^i_U$ obtained by considering $U\in\s^i$.
\end{remark}

\begin{lemma} \label{lem:CU_descent}
Let $(X,\s)$ be an HHS with unbounded products. If for all unbounded $U\in\s$, every $x\in\C U$ lies on a uniform biinfinite quasigeodesic, then every quasiisometry of $(X,\dist)$ is a quasiisometry of $(X,\dist^\U)$.
\end{lemma}

\begin{proof}
As noted in Remark~\ref{rem:sufficient_descend}, it suffices to show that if $U$ is $\nest$--maximal in some $\U^i$, then each pair of points of $F^i_U$ lies on some uniform quasigeodesic of $F^i_U$, where $F^i_U$ is the space of nested partial tuples in $(X,\dist^i,\s^i)$.

Let $U\in\U^i$ be $\nest$--maximal, and let $x_1,x_2\in F^i_U$. Consider the set $\beta=H_{\theta_0}(x_1,x_2)$. By the definition of $\U^i$, no two unbounded elements of $\s^i_U=\s^i\cap\s_U$ are orthogonal, so $\beta\subset F^i_U$ is a uniform quasigeodesic. Moreover, this implies that $F^i_U$ is hyperbolic, for instance by \cite[Thm~2.1]{bowditch:coarse}.

By \cite[Lem.~2.8]{behrstockhagensisto:hierarchically:2}, the set $\Rel_{100E}(x_1,x_2)\cap\s^i_U$ is totally ordered. Let $V_1$ and $V_2$ be the minimal and maximal elements, respectively. By assumption, there are uniform biinfinite quasigeodesics $\alpha_j\subset\C V_j$ through $\pi_{V_j}(x_j)$, respectively. Although the $\alpha_j$ might not be coarsely connected subsets of the hyperbolic space $F^i_U$, their convex hulls $\bar\alpha_j$ therein are uniform biinfinite quasigeodesics. By the distance formula \cite[Thm~4.5]{behrstockhagensisto:hierarchically:2} and the choice of the $V_j$, the quasigeodesic $\bar\alpha_j$ passes uniformly close to $x_j\in F^i_U$. We can therefore take subrays $\bar\alpha_j'$ such that $\bar\alpha_1'\cup\beta\cup\bar\alpha_2'$ is a uniform biinfinite quasigeodesic in $F^i_U$ through $x_1$ and $x_2$.
\end{proof}

The hyperbolic space $(X,\dist^{\mathcal U})$ to which quasi-isometries descend is set-theoretically unchanged from $X$, so still admits a map $\pi_S:X\to \C S$, where $S\in \s$ is the maximal element.  Moreover, as explained in the factored space construction in \cite{behrstockhagensisto:asymptotic}, this map is coarsely lipschitz.  However, it need not be a quasi-isometry, as explained in Remark~\ref{rem:factored-space-not-CS}.  We will need to further cone off $(X,\dist^{\U})$ to get a quasi-isometry to $\mathcal CS$.  

We now informally explain how to do this. We have to identify the (quasiconvex) subspaces that need to be coned off, which from the construction, come from product regions (with at least two factors) of the original HHS: they are isolated vertices as in Figure~\ref{fig:coning}. That is, in the original HHS there is a subspace of the form $U\times V$, and only one of the factors, say $V$, gets coned off when constructing $(X,\dist^{\mathcal U})$. To recognise those subspaces, we look at preimages of points in $(X,d^{\mathcal U})$ and notice that if two points lie in the remnant of a product region, their preimages have unbounded coarse intersection. In the notation above, preimages are of the form $\{u\}\times V$, and any two such subsets lie at finite Hausdorff distance (we stated bounded coarse intersection because we will take preimages of balls, not points, and we emphasise that the Hausdorff distance is finite but depends on the subsets in question). In fact, this characterises pairs of points in remnants of product regions, once this is properly quantified. We make this precise after setting notation.

If $(X,\s)$ is an HHS with $K$--unbounded products, then we can and shall assume that $K\ge\max\{1,K_0,K_1,K_2,K_3\}$, where the $K_i$ are the constants appearing in the statement of \cite[Lem.~1.20]{behrstockhagensisto:quasiflats}. Recall the function $\theta_u$ from the uniqueness axiom of HHSs: if $(X,\dist,\s)$ is an HHS and $\dist(x,y)\ge\theta_u(r)$, then there is some $U\in\s$ with $\dist_U(x,y)\ge r$. Also recall that every element $V\in\s\smallsetminus\{S\}$ has an orthogonal container $V^\bot$. Finally, recall (see \cite[Lem.~6.2]{behrstockhagensisto:hierarchically:2}) that for each HHS $(X,\s)$ there is a constant $\theta_0$ such that for any $A\subset X$, the $\theta_0$--hull $H_{\theta_0}A$, defined in \cite[Sec. 6]{behrstockhagensisto:hierarchically:2}, is hierarchically quasiconvex in the sense of \cite[Def.~5.1]{behrstockhagensisto:hierarchically:2} (meaning roughly that its projections in all hyperbolic spaces are quasiconvex, and it is coarsely maximal with those projections). We will also use the notion of a \emph{gate map} $\g_H$ to a hierarchically quasiconvex set $H$ \cite[Def.~5.4]{behrstockhagensisto:hierarchically:2}, defined by assembling all closest-point projections in the associated hyperbolic spaces.

\begin{proposition} \label{prop:product_characterisation}
Suppose that $(X,\dist,\s)$ is an HHS with $K$--unbounded products. There is a constant $C$ such that the following holds. If $U\not\in\U\cup\{S\}$ has $F_U$ unbounded, then every $x,y\in P_U$ lie in unbounded subsets $I_x,I_y\subset(X,\dist)$, respectively, that are at finite Hausdorff distance and have diameter at most $C$ in $(X,\dist^\U)$.

Conversely, for each $C$ there exists $D$ such that the following holds. Suppose that $x,y\in X$ lie in unbounded subsets $I_x,I_y\subset(X,\dist)$ that are at finite Hausdorff-distance and have diameter at most $C$ in $(X,\dist^\U)$. Either $\dist(x,y)\le2C+\theta_u(K)$, or there is some $V\in\U$ such that $x$ and $y$ are both $D$--close to $P_V$ in $(X,\dist^\U)$.
\end{proposition}

\begin{proof}
For the first statement, we can define $I_x=\{x\}\times E_U\subset P_U$ and $I_y=\{y\}\times E_U\subset P_U$. These are unbounded because $(X,\s)$ has unbounded products, and they are clearly at finite Hausdorff-distance. Moreover, the fact that $U\not\in\U$ implies that every unbounded $W\bot U$ is in $\U$. There is some such $W$ because $U\ne S$, so $I_x$ and $I_y$ have uniformly bounded diameter in $(X,\dist^\U)$.

Now let us consider the converse statement, in which we start with subsets $I_x$ and $I_y$ containing $x$ and $y$, respectively. Since $I_x$ and $I_y$ are at finite Hausdorff-distance, so are their projections to each $\C W$, and hence so are the convex hulls of these projections. This shows that the hierarchically quasiconvex hulls $H_{\theta_0} I_x$ and $H_{\theta_0} I_y$ are at finite Hausdorff-distance, and it follows that the gates $\g_{H_{\theta_0} I_x}I_y$ and $\g_{H_{\theta_0} I_y}I_x$ are unbounded. Let $I'_x=\g_{H_{\theta_0} I_x}I_y$ and let $I'_y=\g_{H_{\theta_0} I_y}I_x$.

Because $I'_x$ is unbounded, the uniqueness axiom implies the existence of a pair of points $a,a'\in I'_x$ with $\dist(a,a')\le2\theta_u(K)$ and a domain $V\in\Rel_{K}(a,a')$ such that $\Rel_K(a,a')\subset\s_V$. Consider $b=\g_{H_{\theta_0} I_y}a$. If $\dist(a,b)\le\theta_u(K)$, then 
\[
\dist^\U(x,y) \,\le\, \dist^\U(x,a)+\dist^\U(a,b)+\dist^\U(b,y) \,\le\, 2C+\theta_u(K).
\]

Otherwise, the set $\Rel_K(a,b)$ is nonempty. According to \cite[Lem.~1.20(5)]{behrstockhagensisto:quasiflats}, we have $U\bot V$ for all $U\in\Rel_K(a,b)$. Now suppose that $W\in\s$ has $V\pnest W$ or $V\trans W$. Using the fact that $\dist_V(a,a')\ge K$, consistency and bounded geodesic image tell us that at least one of $\pi_W(a),\pi_W(a')$ is $E$--close to $\rho^V_W$. This shows that $\dist(a,P_V)$ is bounded by some uniform constant $D'$ defined in terms of $\dist(a,a')\le2\theta_u(K)$ and the uniqueness function. Because of \cite[Lem~1.20]{behrstockhagensisto:quasiflats}, we can use a similar argument with slightly different constants to obtain a uniform bound $D''$ on $\dist(b,P_V)$ in terms of $\dist(b,\g_{H_{\theta_0}I_y}a')$, which is itself bounded in terms of $\dist(a,a')\le2\theta_u(K)$. We can now compute
\[
\dist^\U(x,P_V) \,\le\, \dist^\U(x,a) + \dist^\U(a,P_V) \,\le\, C+D',
\]
and $\dist^\U(y,P_V)$ is bounded similarly. Since $V\in\Rel_K(a,a')$, it is unbounded, and the condition that $\diam_{(X,\dist^\U)}I_x\le C$ implies that $V\in\U$. 
\end{proof}

We can now prove the main theorem of this section, after collecting all required hypotheses in the following definition.

\begin{defn}\label{defn:well-behaved-HHS}
    We call an HHS $(X,\s)$ \emph{well behaved} if it is normalised, has unbounded products, and any one of the following holds for all $U\in\s$:
    \begin{itemize}
\item   If $F_U$ is unbounded then each pair of points of $F_U$ lies on a uniform biinfinite quasigeodesic of $F_U$.
\item   If $\C U$ is unbounded, each $p\in \C U$ lies on a uniform biinfinite quasigeodesic of $\C U$.
\item   $\C U$ has uniformly cobounded isometry group.
\end{itemize}    
\end{defn}

\begin{theorem}\label{thm:qi-descend}
Let $(X,\s)$ be a well-behaved HHS. Then every quasiisometry $f$ of $X$ induces a quasiisometry $f'$ of $\C S$ such that $f'\pi_S$ and $\pi_Sf$ coarsely agree.
\end{theorem}

\begin{proof}
Let $f$ be a quasiisometry of $(X,\dist)$. Under the first assumption, Lemma~\ref{lem:qi_descends_to_fake_curve_graph} shows that $f$ is a quasiisometry of $(X,\dist^\U)$. Under the second assumption, this is given by Lemma~\ref{lem:CU_descent}, and the third assumption implies the second. 

Let $\V=(\s\smallsetminus\U)\smallsetminus\{S\}$. By the distance formula, the map $\pi_S$ is a quasiisometry from the HHS $(X,(\dist^\U)^\V,\{S\})$ to $\C S$ (here we use that the HHS is normalised, otherwise the map would only be a quasi-isometric embedding). It therefore suffices to show that $f$ is a coarsely lipschitz map of $(X,(\dist^\U)^\V)$, for then the same will apply to a quasiinverse of $f$. 

Let us write $\dist'=(\dist^\U)^\V$. By the definition of $\dist'$, we just have to check that if $x$ and $y$ both lie in some $F_U\subset(X,\dist^\U,\s\smallsetminus\U)$ with $U\in\V$ then $\dist'(f(x),f(y))$ is uniformly bounded. If $\diam_{(X,\dist^\U)}(F_U)<\infty$, then $\dist^\U(x,y)$ is uniformly bounded by the bounded domain dichotomy, and hence so is $\dist'(f(x),f(y))$, because $f$ is a quasiisometry of $(X,\dist^\U)$ and $\dist'\le\dist^\U$. 

Otherwise, the forward direction of Proposition~\ref{prop:product_characterisation} provides a pair of unbounded subspaces $I_x,I_y\subset(X,\dist)$ at finite Hausdorff distance that lie in bounded $\dist^\U$--neighbourhoods of $x$ and $y$, respectively. These properties are preserved by quasiisometries, so applying the reverse direction of Proposition~\ref{prop:product_characterisation} with $fI_x$ and $fI_y$ tells us that either $\dist(f(x),f(y))$ is uniformly bounded or there is a domain $V\in\s\smallsetminus\{S\}$ such that $f(x)$ and $f(y)$ are uniformly close to $P_V$ with respect to $\dist^\U$. In the latter case, the definition of $\dist'$ gives a uniform bound on $\dist'(x,y)$.
\end{proof}

In the case of hierarchically hyperbolic groups, we can remove the unbounded product assumption using results from \cite{abbottbehrstockdurham:largest}, at the expense of working in the slightly more general category of $G$--HHSes; see Definition \ref{defn:HHG}.

\begin{corollary}\label{cor:HHG-QI-descend-propaganda}
Let $(G,\s)$ be an HHG and suppose that the action of $\Stab_G(U)$ on $\C U$ is cobounded for every $U\in\s$, and that $G$ is not wide.  Then $G$ admits a $G$--HHS structure $(G,\s')$ such that the following holds, where $S\in\s'$ is the unique $\nest$--maximal element: $\mathcal CS$ is unbounded and every quasiisometry $f$ of $G$ induces a quasiisometry $f'$ of $\C S$ such that $f'\pi_S$ and $\pi_Sf$ coarsely agree.
\end{corollary}

\begin{proof}
This follows from Theorem \ref{thm:qi-descend} together with Lemma \ref{lem:making-good-behaviour} below (which follows from \cite{abbottbehrstockdurham:largest}), which can be used to remove the unbounded products hypothesis, and Lemma \ref{lem:irreducible}, which relates non-wideness to unboundedness of $\mathcal CS$.
\end{proof}

The conclusion of Theorem~\ref{thm:qi-descend} is not true without some extra hypothesis beyond unbounded products, as illustrated by the following example.

\begin{example}
Let $X$ be the universal cover of the plane minus an open square, which is a CAT(0) cube complex whose hyperclosure (see \cite{hagensusse:onhierarchical}) $\s$ is a factor system in the sense of \cite{behrstockhagensisto:hierarchically:1}, so that $(X,\s)$ is an HHS with unbounded products. However, there are self-quasiisometries of $X$, modelled on the logarithmic spiral map, that do not induce quasiisometries of the contact graph of $X$. See the discussion in \cite[\S3]{cashen:quasiisometries}.
\end{example}

\section{Acylindrical hyperbolicity and quasi-isometries}\label{sec:acyl-qi}
Throughout this section, we still work with a fixed finitely generated group $G$ and word metric $\dist_G$.  We start with a stronger version of Assumption \ref{assumpt:space}, which strengthens \emph{partial detectability} to full Morse detectability and the WPD property to acylindricity.

\begin{definition}[Geometrically faithful pair]\label{defn:strong-21}
Let $G$ act on a $\delta$--hyperbolic geodesic space $(X,\dist_X)$.  Fix a basepoint $x_0\in X$ and let $\rho:G\to X$ be the corresponding orbit map.  Assume that the action is cobounded, i.e. $X\subseteq N_\delta(\rho(G))$. Suppose that the following conditions all hold:
\begin{enumerate}
    \item\label{item:WPD-strong21} (\textbf{Acylindricity.}) $G$ acts on $X$ acylindrically and non-elementarily.
    \item \label{item:descend-strong21} (\textbf{QIs descend.}) For each $\nu$ there exists $\lambda$ such that for each $\nu$--quasi-isometry $\phi:G\to G$ there exists a $\lambda$--quasi-isometry $\bar \phi:X\to X$ such that $\dist_X(\phi(g)(x),\bar\phi(gx))\leq\lambda$ for all $g\in G$ and $x\in X$.

    \item \label{item:detect-strong21} (\textbf{Morse detectable.}) For every Morse gauge $M$ there exists $\lambda$ such that if $\gamma\subset G$ is an $M$--Morse geodesic, then $\rho\circ\gamma$ is a $\lambda$--quasigeodesic.  Conversely, for each $\lambda$ there is a Morse gauge $M$ such that if $\gamma\subset G$ is a geodesic such that $\rho\circ\gamma$ is a $\lambda$--quasigeodesic, then $\gamma$ is $M$--Morse.
\end{enumerate}
Then the pair $(G,X)$ is \emph{\needaname}.  We refer to the constant $\delta$, the map $\nu\mapsto \lambda$ implicit in condition \eqref{item:descend-strong21}, and the maps $\lambda\mapsto M$ and $M\mapsto \lambda$ implicit in condition \eqref{item:detect-strong21} as the \emph{parameters} of the \needaname pair $(G,X)$.
\end{definition}

A related coarse version (targeted at our applications) is as follows:

\begin{definition}[Quasi-\needaname pair]\label{defn:quasi-21}
Let $(X,\dist_X)$ be a $\delta$--hyperbolic space and let $\rho:(G,\dist_G)\to(X,\dist_X)$ be a $\delta$--coarsely surjective $\delta$--coarsely lipschitz map.  Suppose that the following all hold:
\begin{enumerate}
    \item \label{item:WPD-quasi21} (\textbf{Morse ray.}) $G$ contains a Morse geodesic ray.
    \item\label{item:descend-quasi21} (\textbf{QIs descend.}) For each $\nu$ there exists $\lambda$ such that for each $\nu$--quasi-isometry $\phi:G\to G$ there exists a $\lambda$--quasi-isometry $\bar \phi:X\to X$ such that $\dist_X(\rho(\phi(g)),\bar\phi(\rho(g)))\leq\lambda$ for all $g\in G$ and $x\in X$.
    \item \label{item:detect-quasi21} (\textbf{Morse detectable.}) For every Morse gauge $M$ there exists $\lambda$ such that if $\gamma\subset G$ is an $M$--Morse geodesic, then $\rho\circ\gamma$ is a $\lambda$--quasigeodesic. Conversely, for each $\lambda$ there is a Morse gauge $M$ such that if $\gamma\subset G$ is a geodesic such that $\rho\circ\gamma$ is a $\lambda$--quasigeodesic, then $\gamma$ is $M$--Morse.

    \item \label{item:geom-sep-quasi21} (\textbf{Geometrically separated fibres.})  For all $r\geq 0$ there exists $R\geq 0$ such that for all $x,y\in X$ such that $\dist_X(x,y)\geq R$, $$\diam\big(N^G_s(\rho^{-1}(N^X_r(x)))\cap \rho^{-1}(N_r^X(y))\big)<\infty$$
    for all $s\geq 0$, where $N^G$ and $N^X$ respectively denote neighbourhoods in $G$ and $X$.
\end{enumerate}

Then the pair $(G,X)$ is \emph{quasi-geometrically faithful}.  We refer to the constant $\delta$, the map $\nu\mapsto \lambda$ implicit in condition \eqref{item:descend-quasi21}, and the maps $\lambda\mapsto M$ and $M\mapsto \lambda$ implicit in condition \eqref{item:detect-quasi21} as the \emph{parameters} of the quasi-\needaname pair $(G,X)$.
\end{definition}

We relate the definitions, and show that Definition \ref{defn:quasi-21} is quasi-isometry invariant:

\begin{lemma}\label{lem:quasi-21-qi}
Let $H$ be a finitely generated group such that there exists a quasi-isometry $\psi:H\to G$, and suppose that $(G,X)$ is a quasi-\needaname pair, with $\rho:G\to X$ the coarsely lipschitz map from Definition \ref{defn:quasi-21}.  Then $(H,X)$ is a quasi-\needaname pair, with map $\rho\circ \psi$ and parameters depending only on the parameters of $(G,X)$ and the quasi-isometry constants of $\psi$.

Also, if $(G,X)$ is a \needaname pair, then it is a quasi-geometrically faithful pair.
\end{lemma}

\begin{proof}
Let $(G,X)$ be a quasi-\needaname pair, with $\rho$ as in the statement, and let $\psi:H\to G$ be a quasi-isometry.  Hence $\rho\psi:H\to X$ is a coarsely surjective coarsely lipschitz map.  Existence of a Morse ray is a quasi-isometry invariant property, so Definition \ref{defn:quasi-21}.\eqref{item:WPD-quasi21} holds for $H$. 

Suppose that $\phi:H\to H$ is a quasi-isometry.  Then by Definition~\ref{defn:quasi-21}.\eqref{item:descend-quasi21} we have a coarsely commutative diagram
\begin{center}
    $
    \begin{diagram}
     \node{H}\arrow{s,l}{\phi}\arrow{e,t}{\psi}\node{G}\arrow{s,l}{\phi'}\arrow{e,t}{\rho}\node{X}\arrow{s,l}{\bar\phi'}\\
     \node{H}\arrow{e,b}{\psi}\node{G}\arrow{e,b}{\rho}\node{X}
    \end{diagram}
    $
\end{center}
(constants depending only on the parameters of $(G,X)$ and the quasi-isometry constants for $\phi$), where $\phi'=\psi\phi\hat\psi$ and $\hat\psi$ is a quasi-inverse of $\psi$. By construction, using $\rho\psi$ in place of $\rho$, we have verified Definition \ref{defn:quasi-21}.\eqref{item:descend-quasi21} for $(H,X)$.

Next, we check Definition \ref{defn:quasi-21}.\eqref{item:detect-quasi21} for $(H,X)$, using the map $\rho\psi$.  This follows since the corresponding property holds for $(G,X)$, and in fact the same property, with geodesics replaced by quasigeodesics, also holds in $(G,X)$, using e.g. \cite[Lem. 2.8]{russellsprianotran:local}, and we can move quasigeodesics between $G$ and $H$ using $\psi$.

Finally, we verify the geometric separation for fibers, i.e. that $(H,X)$ satisfies Definition \ref{defn:quasi-21}.\eqref{item:geom-sep-quasi21}.  Let $s\geq 0$ and let $x,y\in X$. Fix $a,b,a',b'\in H$ such that $\dist_H(a,a'),\dist_H(b,b')\leq s$ and $\dist_X(\rho\psi(a),x),\dist_X(\rho\psi(b),x)\leq r$ and $\dist_X(\rho\psi(a'),y),\dist_X(\rho\psi(b'),y)\leq r$.  Then $\dist_G(\psi(a),\psi(a')),\dist_G(\psi(b),\psi(b'))$ are bounded in terms of $s$ and $\psi$, and so Definition \ref{defn:quasi-21}.\eqref{item:geom-sep-quasi21}, applied to $(G,X)$, bounds $\dist_G(\psi(a'),\psi(b'))$ (the bound is allowed to depend on $x,y,s$), and we therefore get the required bound on $\dist_H(a',b')$.

Now we prove the second assertion, that \needaname implies quasi-geometrically faithful. Suppose that $(G,X)$ is a \needaname pair, with orbit map $\rho$.  Then Definition \ref{defn:quasi-21}.\eqref{item:descend-quasi21},\eqref{item:detect-quasi21} are immediate from Definition \ref{defn:strong-21}.\eqref{item:descend-strong21},\eqref{item:detect-strong21}.  Let $g\in G$ be a loxodromic WPD element, so that $n\mapsto \rho(g^n)x_0=\rho(g^n)$ is a quasi-isometric embedding $\mathbb Z\to X$, and hence $n\mapsto g^n$ is a Morse quasigeodesic $\mathbb Z\to G$, because of Definition \ref{defn:strong-21}.\eqref{item:detect-strong21}.  Restricting to $\mathbb N$ gives a Morse ray in $G$, so Definition \ref{defn:quasi-21}.\eqref{item:WPD-quasi21} holds.  Definition \ref{defn:quasi-21}.\eqref{item:geom-sep-quasi21} follows from Definition \ref{defn:strong-21}.\eqref{item:WPD-strong21} along with \cite[Lem. 3.3]{sisto:quasi-convexity}.  Indeed, the acylindricity assumption guarantees that $G$ acts acylindrically along $X$ in the sense of \cite[Def. 3.1]{sisto:quasi-convexity} (viewing $X$ as a subspace of itself), and the property of preimages of balls mentioned in Definition \ref{defn:quasi-21} coincides with geometric separation in the sense of \cite[Def. 2.1]{sisto:quasi-convexity}.
\end{proof}

The main technical result of this section is:

\begin{proposition}[Acylindrical hyperbolicity from quasi-\needaname pair]\label{prop:quasi-implies-strong}
Let $(G,X)$ be a quasi-\needaname pair.  Then there exists a hyperbolic geodesic space $Y$ such that $G$ acts on $Y$, there exists $g\in G$ acting on $Y$ as a loxodromic element, and every loxodromic $g\in G$ is a WPD element for the $G$--action on $Y$.  In particular, if $G$ is nonelementary, then $G$ is acylindrically hyperbolic.
\end{proposition}

Before the proof, we note:

\begin{corollary}\label{cor:acyl-qi}
Let $G$ be a nonelementary finitely generated group such that there is a hyperbolic space $X$ making $(G,X)$ a \needaname pair, and let $H$ be a finitely generated group quasi-isometric to $G$.  Then $H$ is acylindrically hyperbolic. 
\end{corollary}

\begin{proof}
By Lemma \ref{lem:quasi-21-qi}, $(G,X)$, and hence $(H,X)$, is a quasi-\needaname pair, so applying Proposition \ref{prop:quasi-implies-strong} implies that $H$ is acylindrically hyperbolic.
\end{proof}

We now prove the proposition:

\begin{proof}[Proof of Proposition \ref{prop:quasi-implies-strong}]
Fix a quasi-\needaname pair $(G,X)$ with map $\rho$ and parameters as in Definition \ref{defn:quasi-21}.

Once we produce a hyperbolic space $Y$ on which $G$ acts with a loxodromic WPD element, then, under the additional assumption that $G$ is nonelementary, acylindrical hyperbolicity follows from \cite[Thm. 1.2]{osin:acylindrically}.  In particular, the conclusion that every loxodromic $g\in G$ is WPD is stronger than necessary, but we included it in the statement since it introduces no extra complexity to the proof.

\textbf{Quasi-action of $G$ on $X$:}  Each $g\in G$ can be regarded as a left-multiplication isometry $g:G\to G$, so by Definition \ref{defn:quasi-21}.\eqref{item:descend-quasi21}, there is a constant $\lambda$ such that for all $g$, we have a $\lambda$--quasi-isometry $A(g):X\to X$ such that $\dist_X(A(g)(\rho(x)), \rho(gx))\leq \lambda$ for all $x\in G$.  Up to uniformly increasing $\lambda$, this implies that $\dist_X(A(gh)(y),A(g)(A(h)(y)))\leq \lambda$ for all $y\in X$ and $g,h\in G$.  Here we have used coarse surjectivity of $\rho$.  

Moreover, for any $y\in X$, we can choose $x\in G$ with $\dist_X(\rho(x),y)\leq \lambda$, and then $\dist_X(A(x^{-1})(\rho(x)),\rho(1))\leq \lambda$, so since $A(x^{-1})$ is uniformly coarsely lipschitz, we have, up to uniformly enlarging $\lambda$, that $\dist_X(y, A(g)(\rho(1)))\leq \lambda$ for some $g\in G$.  Thus far, we have produced a uniform $\lambda$ such that $A:G\to X^X$ is a $\lambda$--cobounded $\lambda$--quasi-action of $G$ on $X$ by $\lambda$--quasi-isometries.

\textbf{The space $Y$:}  Using any of the various ``Milnor-\v{S}varc for quasi-actions'' statements in the literature, we now replace the quasi-action by an action.  For instance, \cite[Prop. 4.4]{manning:pseudocharacters} provides, up to uniform enlargement of $\delta$ and $\lambda$, a $\delta$--hyperbolic graph $Y$, a $\lambda$--quasi-isometry $q:X\to Y$, and a homomorphism $C:G\to \mathrm{Isom}(Y)$ such that 
\begin{itemize}
    \item $C$ is $\lambda$--cobounded, and
    \item $\sup_{x\in X}\dist_Y(C(g)(q(x)),q(A(g)(x)))\leq\lambda$ for all $g\in G$.
\end{itemize}

Since the action is cobounded, in particular the orbits are quasiconvex and from, e.g., \cite[Prop. 3.2]{capracecornuliermonodtessera:amenable}, there exists $g\in G$ acting on $Y$ loxodromically, because $Y$ is unbounded in view of Definition \ref{defn:quasi-21}.\eqref{item:WPD-quasi21},\eqref{item:detect-quasi21}.  Let $\tau:G\to Y$ be $\tau(g)=C(g)(q(\rho(1)))$, which $\lambda$--coarsely coincides with $q(A(g)(\rho(1)))$ and so with $q(\rho(g))$.

\textbf{Checking WPD:}  We now verify that $g$ is a WPD element.  In our notation, this means we must show that for each $\epsilon>0$, there exists $n\in\mathbb Z$ such that $|\mathcal H(n,\epsilon)|<\infty$, where 
$$\mathcal H(n,\epsilon)=\left\{h \in G:\dist_Y(C(h)(\tau(1)), \tau(1))<\epsilon, \dist_Y(C(h)(\tau(g^n)),\tau(g^n))<\epsilon\right\}.$$
There exists $r$, depending only on $\epsilon$ and $q$, but not on $n$, such that $h\in \mathcal H(n,\epsilon)$ implies $\dist_X(\rho(1),A(h)(\rho(1)))\leq r$ and $\dist_X(\rho(g^n),A(h)(\rho(g^n)))\leq r$.  So by the triangle inequality,
$$\dist_X(\rho(1),\rho(h))\leq r+\lambda$$
and 
$$\dist_X(\rho(g^n),\rho(hg^n))\leq r+\lambda.$$
Definition \ref{defn:quasi-21}.\eqref{item:geom-sep-quasi21} provides an $R=R(r+\lambda)$ such that $\dist_X(\rho(1),\rho(g^n))\geq R$ implies that the $\rho$--preimages of $N^X_{r+\lambda}(\rho(1))$ and $N^X_{r+\lambda}(\rho(g^n))$ are geometrically separated.

For $n\in\mathbb N$, let $\gamma_n$ be a geodesic in $G$ joining $1$ to $g^n$.  Since $g$ is loxodromic, the composition $\tau \gamma_n$ is a quasi-isometric embedding with constants depending on $\lambda,\delta,$ and $g$ but not on $n$.  Hence the map $\rho\gamma_n$ is also a quasi-isometric embedding with constants independent of $n$, so for sufficiently large $n$ we have $\dist_X(\rho(1),\rho(g^n))>R$.  Fix such an $n$.  

If $h\in\mathcal H(n,\epsilon)$, then from the earlier discussion we have $h\in \rho^{-1}(N_{r+\lambda}^X(\rho(1)))$ and $hg^n\in \rho^{-1}(N_{r+\lambda}^X(\rho(g^n)))$.  Thus $h\in N^G_s(\rho^{-1}(N^X_{r+\lambda}(\rho(g^n))),$ where $s=\dist_G(1,g^n)$ is independent of $h$.  So Definition \ref{defn:quasi-21}.\eqref{item:geom-sep-quasi21} allows only finitely many possibilities for $h$, as required.
\end{proof}

\subsection{Application to hierarchically hyperbolic groups}\label{subsec:HHG-application-quasi21}
We now use Theorem \ref{thm:qi-descend} to apply Proposition \ref{prop:quasi-implies-strong} to hierarchically hyperbolic groups.  We first recall the definition of an HHG (see e.g. \cite{petytspriano:unbounded} for the following modern formulation and \cite{durhamhagensisto:correction} for why it is equivalent to the original definition from \cite{behrstockhagensisto:hierarchically:2}):

\begin{defn}[Hierarchically hyperbolic group, $G$--HHS]\label{defn:HHG}
Let $G$ be a finitely generated group.  Suppose that $G$, with the quasi-isometry class of word metrics associated to finite generating sets, admits an HHS structure $(G,\mathfrak S)$.  Suppose, moreover, that $G$ acts cofinitely on $\mathfrak S$ preserving $\nest,\orth,\transverse$, and that the following hold for all $U,V\in \mathfrak S$:
\begin{itemize}
 \item for each $g\in G$, there is an isometry $g:\mathcal CU\to\mathcal CgU$ such that for all $g,h\in G$,
 \item the composition $\mathcal CU\stackrel{h}{\longrightarrow}\mathcal ChU\stackrel{g}{\longrightarrow}\mathcal CghU$ agrees with the isometry $gh$, and 
 \item $\pi_{gU}(gx)=g(\pi_U(x))$ for all $x\in G$ and 
 \item $\rho^{gU}_{gV}=g(\rho^U_V)$ whenever $U\transverse V$ or $U \propnest V$.
\end{itemize}
Then $(G,\mathfrak S)$ is a \emph{hierarchically hyperbolic group}.

Following \cite[Def. 3.3]{abbottbehrstockrussell:structure}, we say the pair $(G,\s)$ is a \emph{$G$--HHS} if it has all of the above properties, except weakened in the following way: we \emph{do not} require $\s$ to contain only finitely many $G$--orbits, but we do require $(G,\s)$ to satisfy the bounded domain dichotomy.
\end{defn}

\begin{remark}\label{rem:GHHS}
The reason to work with $G$--HHSes is twofold: first, various results about HHGs  hold in the slightly more general context of $G$-HHSes, and second, the \emph{maximisation} procedure from \cite{abbottbehrstockdurham:largest}, used to produce an HHS with the bounded domain dichotomy into one with unbounded products, will in general convert an HHG structure on $G$ into a $G$--HHS structure that does not necessarily have finitely many orbits.  See \cite[Rem. 3.4]{abbottbehrstockrussell:structure}.
\end{remark}

Our main result is about acylindrically hyperbolic HHGs where the top-level hyperbolic space witnesses acylindrical hyperbolicity.  We formulate this is follows:

\begin{defn}[Irreducible]\label{defn:unbounded-CS}
The $G$--HHS $(G,\mathfrak S)$ is \emph{irreducible} if $G$ has unbounded orbits in $\mathcal CS$, where $S\in\mathfrak S$ is the unique $\nest$--maximal element.
\end{defn}

Irreducibility is related to acylindrical hyperbolicity, and various other properties by assembling various results in the literature:

\begin{lemma}[HHG irreducibility criteria]\label{lem:irreducible}
The following are equivalent, for a normalised $G$--HHS $(G,\mathfrak S)$ with unbounded products and $G$ nonelementary:
\begin{enumerate}
    \item $(G,\mathfrak S)$ is irreducible.\label{item:irred-HHG}
      \item $G$ is acylindrically hyperbolic.\label{item:acyl-HHG}
       \item $G$ has a Morse element.\label{item:Morse-HHG}

    \item $G$ is not \emph{wide}, i.e. asymptotic cones of $G$ have cut-points.\label{item:nonwide-HHG}
      
    \item $G$ is not quasi-isometric to the product of two unbounded metric spaces.\label{item:nonproduct-HHG}

    \item $G$ has no nonempty finite orbit in $\mathfrak S-\{S\}$.\label{item:finite-orbit-HHG}
\end{enumerate}
\end{lemma}

\begin{proof}
Since the action of $G$ on $\mathcal CS$ is always acylindrical \cite[Thm. 14.3]{behrstockhagensisto:hierarchically:1}, and we are assuming $G$ is nonelementary, \eqref{item:irred-HHG} implies \eqref{item:acyl-HHG}.  Item \eqref{item:acyl-HHG} implies \eqref{item:Morse-HHG} and \eqref{item:nonwide-HHG} by \cite{sisto:quasi-convexity}, and both of the latter imply \eqref{item:nonproduct-HHG}.  If there exists $U\in\mathfrak S-\{S\}$ such that $G\cdot U$ is finite, then $G$ virtually stabilises the standard product region $P_U$ which, by the unbounded products assumption, is quasi-isometric to the product of two unbounded metric spaces (see \cite[Sec. 15]{casalsruizhagenkazachkov:real} for the quasi-isometry computation).  Hence \eqref{item:nonproduct-HHG} implies \eqref{item:finite-orbit-HHG}.  Finally, using the normalisation assumption, \cite[Cor. 9.9]{durhamhagensisto:boundaries} says that \eqref{item:finite-orbit-HHG} implies \eqref{item:irred-HHG}.  The latter implication also follows from \cite[Thm. 5.1]{petytspriano:unbounded}.
\end{proof}

For what follows, in view of results from \cite{abbottbehrstockdurham:largest} we can work with more general HHSs than well-behaved ones. Essentially, we can drop the unbounded products assumption:

\begin{definition}[Reasonably behaved]\label{defn:well-behaved}
The HHS $(X,\s)$ is \emph{reasonably behaved} if it is normalised, it satisfies the \emph{bounded domain dichotomy} from \cite[Def. 3.2]{abbottbehrstockdurham:largest} and for all $U\in\mathfrak S$, $\Isom(\mathcal CU)$ acts on $\mathcal CU$ coboundedly. 
\end{definition}

The next lemma is an application of results in \cite{abbottbehrstockdurham:largest}, used to avoid having to add the hypothesis of \emph{unbounded products} to the final statements:

\begin{lemma}\label{lem:making-good-behaviour}
Let $(G,\s)$ be a reasonably behaved irreducible $G$--HHS. Then there is a $G$--HHS structure $(G,\mathfrak T)$ that is well behaved and irreducible.
\end{lemma}

\begin{proof}
Applying \cite[Cor. 3.8]{abbottbehrstockdurham:largest} yields a $G$--HHS structure $(G,\mathfrak T)$ with unbounded products.  Examining the proof of the aforementioned theorem, each element $U\in\mathfrak T$ is of one of three types.  

First, $U$ can be a \emph{dummy domain} coming from the application of \cite[Thm. A.1]{abbottbehrstockdurham:largest}, and in this case $\mathcal CU$ is a single point.

Second, $U$ can correspond to an element of $\mathfrak S$, with stabiliser $\Stab_G(U)$ and associated hyperbolic space $\mathcal CU$ unchanged from the original HHG/HHS structure.

Third, we could have $U=S$.  In this case, $\mathcal CS$ is replaced by a hyperbolic space $\mathcal TS$ which is an electrification of $G$, so there is a coarsely lipschitz map $G\to\mathcal TS$, and the construction makes the original map $\pi_S:G\to\mathcal CS$ factor as $G\to \mathcal TS\to\mathcal CS$, where the latter map is also coarsely lipschitz.  So $\mathcal TS$ is unbounded if $\mathcal CS$ was.  All of the maps involved are $G$--equivariant, and the action of $G$ on $\mathcal TS$ is cobounded since the latter is an equivariant electrification.  We take $T=S$ and $\mathcal CT=\mathcal TS$ in the new HHG structure.

Thus $(G,\mathfrak T)$ has unbounded products and is therefore a well-behaved HHG structure, with unbounded maximal hyperbolic space by Lemma \ref{lem:irreducible}.
\end{proof}

Now we can state and prove the HHG part of Theorem \ref{thmint:acyl} from the introduction, along with a slightly more general version:

\begin{theorem}[AH from QI to $G$--HHS]\label{thm:HHG-QI-acyl}
Let $G$ be a nonelementary finitely generated group that is not quasi-isometric to the product of two unbounded spaces.  Suppose that $G$ admits a reasonably behaved $G$--HHS structure.  Then any finitely-generated group $H$ quasi-isometric to $G$ is acylindrically hyperbolic.
\end{theorem}

\begin{proof}
First suppose that $(G,\s)$ is a reasonably behaved $G$--HHS structure.  Using Lemma \ref{lem:making-good-behaviour} and Lemma \ref{lem:irreducible}, we can assume that $(G,\s)$ is well behaved and irreducible.  Let $S\in\s$ be the maximal element.  By \cite[Thm. 14.3]{behrstockhagensisto:hierarchically:1}, $G$ acts on $\mathcal CS$ acylindrically.  By Theorem \ref{thm:qi-descend}, along with the well-behaved assumption, quasi-isometries of $G$ descend through $\pi_S$ to $\mathcal CS$ (since, by Definition \ref{defn:HHG}, $\pi_S$ is an orbit map), in the sense of Definition \ref{defn:strong-21}.\eqref{item:descend-strong21}. Finally, equivalence of Morseness of geodesics in $G$ and quasi-geodesicity of their compositions with $\pi_S$ is given by \cite[Cor. 6.2]{abbottbehrstockdurham:largest}.  Hence $(G,\mathcal CS)$ is a \needaname pair as in Definition \ref{defn:strong-21}.  By Corollary \ref{cor:acyl-qi}, any $H$ as in the statement is acylindrically hyperbolic.
\end{proof}

\begin{remark}\label{rem:HHG-weak-21}
The proof of Theorem \ref{thm:HHG-QI-acyl} shows that any group $G$ that admits a reasonably behaved irreducible HHG structure $(G,\mathfrak S)$  admits an action on a hyperbolic space $X$ such that $(G,X)$ is a \needaname pair, and in particular, $(G,X)$ satisfies Assumption \ref{assumpt:space}.  If the original HHG structure had unbounded products, then we can take $X=\C S$.  And, moreover, the preceding holds in the slightly more general setting of a $G$--HHS, not just an HHG.
\end{remark}

\appendix
\renewcommand{\sectionname}{}

\section{Appendix: Quasi-isometries of HHGs with one-ended maximal \mbox{hyperbolic spaces}, by Jacob Russell}
\label{sec:appendix}

In this appendix, we prove a converse of Theorem \ref{thm:qi-descend} when $(X,\s)$ is a hierarchically hyperbolic group and the $\nest$-maximal space $\C S$ is one-ended.

\begin{thm}\label{thm:one-ende_converse}
	Let $(G,\s)$ be a hierarchically hyperbolic group with unbounded products. If $\C S$ is one-ended, then every quasi-isometry $f$ of $\C S$ induces a quasi-isometry $F$ of $G$ so that $f \circ \pi_S$ and $\pi_S \circ F$ coarsely agree. In particular, if $(G,\s)$ is well-behaved as described in Definition \ref{defn:well-behaved-HHS}, then $f \to F$ induces a group isomorphism $\operatorname{QI}(\C S) \to \operatorname{QI}(G)$.
\end{thm}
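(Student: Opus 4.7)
The plan is to construct $F$ via the Realization Theorem for HHSs. For each $g \in G$, consider the tuple obtained by modifying the coordinates of $g$: replace the $\mathcal{C}S$-coordinate $\pi_S(g)$ with $f(\pi_S(g))$, and keep $\pi_U(g)$ for every $U \sqsubsetneq S$. If this modified tuple is consistent, realization yields a coarsely-unique point $F(g) \in G$, and the HHS distance formula immediately shows $F$ is a quasi-isometry: the $\mathcal{C}S$-term changes by a QI (since $f$ is), and all other terms are preserved. The coarse agreement $f \circ \pi_S \sim \pi_S \circ F$ is built into the construction.

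The main obstacle is verifying consistency of the modified tuple; the delicate conditions are the bounded-geodesic-image axioms between $\mathcal{C}S$ and each $\mathcal{C}U$ with $U \sqsubsetneq S$. The original consistency of $g$'s tuple forces $\pi_S(g)$ to lie near the projection $\rho_S^U$ whenever $\pi_U(g)$ is unbounded, and the modified tuple demands the same of $f(\pi_S(g))$, which is not automatic for a general QI. To address this I would show, using one-endedness of $\mathcal{C}S$ together with the unbounded products hypothesis, that $f$ coarsely permutes the family $\{\rho_S^U : U \sqsubsetneq S\}$. Each $\rho_S^U$ arises as the coarse $\pi_S$-image of the unbounded standard product region $\mathbf{P}_U$, and a quasi-isometry of a one-ended hyperbolic space is rigid enough not to scramble or collapse such coarse subsets; this should yield a bijection $U \mapsto U'$ with $f(\rho_S^U)$ coarsely equal to $\rho_S^{U'}$. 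Reindexing the modified tuple along this bijection restores consistency, and realization then produces $F(g)$.

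Given $F$, the QI property falls out of comparing the distance formulae for $(g,h)$ and $(F(g),F(h))$, and the coarse compatibility $\pi_S \circ F \sim f \circ \pi_S$ is immediate. For the group isomorphism statement, Theorem \ref{thm:qi-descend} provides a candidate inverse $F \mapsto f$ via descent; coarse uniqueness in realization guarantees that $f \mapsto F$ is well-defined on $\operatorname{QI}$-classes and respects composition up to bounded error, and well-behavedness supplies the uniform constants needed to pass to $\operatorname{QI}(G)$.

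I expect the construction of the bijection $U \mapsto U'$ to be the main difficulty: this is where one-endedness plays its essential role, and it concentrates most of the real geometric content of the theorem, combining the global topology of $\mathcal{C}S$ with the HHG structure on $G$.
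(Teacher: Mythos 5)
Your realization-theorem approach is genuinely different from the paper's, and it contains a gap I do not see how to close. As you correctly identify, the tuple with $\C S$--coordinate $f(\pi_S(g))$ and unchanged coordinates $\pi_U(g)$ for $U \propnest S$ is generically inconsistent: whenever $\pi_U(g)$ is far from $\rho_U^S(\pi_S(g))$, consistency forces $\pi_S(g)$ to lie near $\rho_S^U$, and there is no reason for $f(\pi_S(g))$ to lie near $\rho_S^U$ (think of a mapping class with large twisting about a curve $\alpha$: its image should twist about some curve near $f(\alpha)$, not about $\alpha$). But the proposed repair does not work, for two reasons. First, even granting a bijection $U \mapsto U'$ with $f(\rho_S^U)$ coarsely equal to $\rho_S^{U'}$, ``reindexing'' the tuple is undefined: the bijection supplies no map $\C U \to \C U'$, so there is nothing to place in the $\C U'$ slot. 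Second, the existence of such a bijection compatible with the HHS structure is essentially the combinatorial rigidity of $\C S$ (\`a la Rafi--Schleimer for the curve graph), which is a far stronger statement than the theorem being proved and does not follow from one-endedness plus unbounded products; moreover the sets $\rho_S^U$ are uniformly bounded subsets of $\C S$, so ``$f$ coarsely permutes them'' is either vacuous or exactly the rigidity you cannot assume for an arbitrary quasi-isometry $f$. Your proposal in effect assumes the hardest part of the conclusion.

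The paper sidesteps realization entirely by passing to the Morse boundary. By Theorem \ref{thm:cobounded=Morse}, Morse directions in $G$ correspond to points of $\partial \C S$ that are cobounded with the basepoint, so one only needs to show that $f$ preserves \emph{coboundedness} of pairs in $\partial \C S$ (Theorem \ref{thm:qi_curve_graph_implies_qi_spaces}\eqref{item:2-stable}); this is where one-endedness enters, via Lemma \ref{lem:QI_preserve_coboundedness}: a Morse geodesic passing near $\rho_T^W$ can be rerouted in $\C S$ by a uniformly short path avoiding a large ball, so the bounded geodesic image axiom bounds all downward projections of the $f$-image. The resulting $2$-stable quasi-m\"obius homeomorphism of $\partial_\ast G$ is then realized by a quasi-isometry $F$ of $G$ using Charney--Cordes--Murray (Theorem \ref{thm:quasi-mobius = quasi-isometry}), and $\pi_S \circ F \sim f \circ \pi_S$ follows from a center-of-ideal-triangle argument. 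Preserving coboundedness is a much weaker, and actually provable, substitute for your claim that $f$ permutes the $\rho_S^U$; if you want to pursue your route, you would first need to establish that statement, at which point the Morse-boundary machinery already finishes the proof without ever constructing consistent tuples.
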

\begin{remark}
	We prove Theorem \ref{thm:one-ende_converse} under looser hypotheses than an HHG. All one needs is that the Cayley graph has an HHS structure on which the group acts by automorphisms. See Section \ref{sec:induced_maps}.
\end{remark}

Our proof of Theorem \ref{thm:one-ende_converse} uses the machinery of quasi-m\"obius maps on the Morse boundary developed by  Charney, Cordes, and Murray \cite{charneycordesmurray:quasi} and independently by Mousley and Russell in the case of HHGs \cite{mousleyrussell:quasi-mobius}.  The idea is that the quasi-isometry $f$ of $\C S$ will induce a quasi-m\"obius map on the boundary $\partial \C S$. Using a result of Abbott, Behrstock, and Durham about unbounded products, this quasi-m\"obius map on  $\partial \C S$ can be upgraded to a quasi-m\"obius map on the Morse boundary of $G$. The results of Charney, Cordes, and Murray (or Mousley and Russell in the HHG case), then say that this quasi-m\"obius map on the Morse boundary is induced by  a quasi-isometry of the group.

One-endedness of $\C S$ comes into play in being able to upgrade the map on $\partial \C S$ to a map on the Morse boundary. Being one-ended allows us to adopt an idea of Rafi and Schleimer for the curve graph and the mapping class group  \cite{rafischleimer_rigid}. 

Theorem \ref{thm:one-ende_converse} can fail when $\C S$ is not one-ended. For example, both the the fundamental group of a non-geometric graph manifold and any non-relatively hyperbolic right angled Artin group have HHG structures with unbounded products and where the $\nest$-maximal hyperbolic space is a quasi-tree.  However, these examples are known to not all be quasi-isometric \cite{behrstockneumann:graph_manifolds}.

\subsection{The Morse Boundary}
We briefly recall the relevant properties of the Morse boundary that we shall need. We direct the reader to \cite{cordes:morse} for a more detailed account.

Let $X$ be a proper geodesics metric space and fix a basepoint $x_0 \in X$. As a set, the Morse boundary of $X$ is the collection of all Morse geodesic rays based at $x_0$ up to asymptotic equivalence. We will topologize this boundary with the topology introduced by Cordes and denote this topological space by $\partial_\ast X_{x_0}$; see \cite{cordes:morse} for details. The primary fact we need about this topology is that any quasi-isometry $f \colon X \to Y$ of proper geodesic metric spaces has a continuous extension to a homeomorphism $\partial f \colon \partial_\ast X_{x_0} \to \partial_\ast Y_{f(x_0)}$. This shows that the boundary is not affected by the choice of basepoint. Moreover, there is a well defined Morse boundary for a finitely generated group $G$ by identifying it with the Morse boundary of any proper geodesic space on which it acts geometrically---usually its Cayley graph. We denote the Morse boundary of $G$ by $\partial_\ast G$ and will always assume it is identified with the Morse boundary of some finitely generated Cayley graph. The Morse boundary is visual in the sense that for any two points $p,q \in \partial_\ast X_{x_0}$, there is some bi-infinite Morse geodesic between $p$ and $q$ (the specific Morse gauge will depend on $p$ and $q$).

\subsection{Cross-ratio in hyperbolic spaces}
Let $X$ be a (not necessarily proper) $\delta$-hyperbolic space. Every pair of distinct points $a,b \in \partial X$ is joined by a bi-infinite $(1,20\delta)$-quasi-geodesic. A point $x \in X$ is a \emph{$K$-center} for the triple $(a,b,c) \in (\partial X)^3$  if $x$ is within $K$ of any  $(1,20\delta)$-quasi-geodesic between any two of $a,b,c$. There exists $K_\delta$, so that for every triple $(a,b,c) \in (\partial X)^3$ the set of $K_\delta$-centers is non-empty provided $a,b,c$ are all distinct. Let $m(a,b,c)$ be the set of $K_\delta$-centers for $(a,b,c)$.  There exists $D=D(\delta)$ so that $\diam(m(a,b,c)) \leq D$ whenever $m(a,b,c)$ is non-empty.

For distinct points $a,b,c,d \in \partial X$ define the \emph{cross-ratio} $[a,b,c,d]$ to be 
\[[a,b,c,d] := \diam(m(a,b,c) \cup m(a,d,c)).\]
This cross ratio is an additive error away from the absolute value of the cross ratio defined by Paulin; see \cite[Lemma 4.2]{paulin:quasi-mobius} for the proper case and \cite[Proposition 4]{mousleyrussell:quasi-mobius} for the non-proper case.

The next lemma is proved by Paulin when $X$ is proper \cite{paulin:quasi-mobius}. The same proof  works in the non-proper case if you replace geodesics with $(1,20\delta)$-quasi-geodesics.

\begin{lemma}\label{lem:hyp_qi_cross_ration}
	Let $X$ and $Y$ be $\delta$-hyperbolic spaces where $\partial X$ has at least 4 points. If $f \colon X \to Y$ is a $(\lambda,\epsilon)$-quasi-isometric embedding, then there exist $\lambda' \geq 1$ and $\epsilon' \geq 0$ determined by $\lambda, \epsilon,$ and $\delta$ so that \[ [\partial f(a), \partial f(b), \partial f(c), \partial f(d)]  \leq \lambda' [a,b,c,d] + \epsilon'\] for all $a,b,c,d \in \partial X$.
\end{lemma}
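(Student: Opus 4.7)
The plan is to adapt Paulin's argument from \cite{paulin:quasi-mobius} by systematically replacing geodesics with $(1,20\delta)$-quasi-geodesics, since in a non-proper hyperbolic space these are the natural connectors between boundary points. The heart of the argument is that a $K_\delta$-centre is approximately preserved by any quasi-isometric embedding, with constants depending only on $\lambda, \epsilon, \delta$, and $\delta_Y$.

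First I would verify that $f$ sends a $(1,20\delta)$-quasi-geodesic in $X$ with ideal endpoints $a,b$ to a $(\lambda, 20\lambda\delta+\epsilon)$-quasi-geodesic in $Y$ with ideal endpoints $\partial f(a), \partial f(b)$. By stability of quasi-geodesics in $\delta_Y$-hyperbolic spaces, this image then lies within Hausdorff distance $R=R(\lambda,\epsilon,\delta_Y)$ of every $(1,20\delta_Y)$-quasi-geodesic in $Y$ between $\partial f(a)$ and $\partial f(b)$. Applying this to each side of an ideal triangle, one sees that if $x \in m(a,b,c)$ is a $K_\delta$-centre then $f(x)$ is a $K'$-centre for $(\partial f(a),\partial f(b),\partial f(c))$ in $Y$, where $K' = \lambda K_\delta + \epsilon + R$.

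For the cross-ratio bound, I would pick $x_1 \in m(a,b,c)$ and $x_2 \in m(a,d,c)$, noting $d_X(x_1,x_2) \leq [a,b,c,d]$ by definition. By the previous paragraph, $f(x_1)$ and $f(x_2)$ are $K'$-centres for $(\partial f(a),\partial f(b),\partial f(c))$ and $(\partial f(a),\partial f(d),\partial f(c))$ respectively in $Y$. Any $K_{\delta_Y}$-centre $y_i$ for the same triple in $Y$ lies within a constant $D' = D'(\max(K',K_{\delta_Y}),\delta_Y)$ of $f(x_i)$, since the set of $\max(K',K_{\delta_Y})$-centres for a fixed triple has diameter at most $D'$. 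Two triangle inequalities then give
\[
d_Y(y_1,y_2) \leq 2D' + \lambda\, d_X(x_1,x_2) + \epsilon \leq \lambda\,[a,b,c,d] + (2D' + \epsilon),
\]
and combining with the uniform bound $D_Y$ on the diameter of a single set of $K_{\delta_Y}$-centres yields $[\partial f(a),\partial f(b),\partial f(c),\partial f(d)] \leq \lambda\,[a,b,c,d] + \epsilon'$ with $\epsilon' = 2D_Y + 2D' + \epsilon$.

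The main obstacle is essentially bookkeeping: once one commits to working with $(1,20\delta)$-quasi-geodesics rather than geodesics throughout, the proof is a straightforward adaptation of Paulin's. One does need to know that the Morse lemma and the uniform diameter bound on sets of $K$-centres persist without properness, but these are standard consequences of thin triangles and quasi-geodesic stability in $\delta$-hyperbolic spaces, both of which hold whether or not $X$ is proper.
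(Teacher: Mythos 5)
Your proposal is correct and follows essentially the same route as the paper, which gives no independent argument but simply cites Paulin's proof with the instruction to replace geodesics by $(1,20\delta)$-quasi-geodesics; your write-up is a faithful (and more detailed) execution of exactly that adaptation, with the key steps — images of $(1,20\delta)$-quasi-geodesics are uniform quasi-geodesics, stability pushes centres to approximate centres, and the diameter bound on centre sets converts this into the cross-ratio inequality — all in order and with correctly tracked constants.
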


\subsection{Cross-ratio on the Morse boundary}

Let $G$ be a finitely generated group and $X$ a finitely generated Cayley graph. Let $\partial_\ast G$ denote the Morse boundary of a group identified with $\partial_\ast X$. For $k \in \mathbb{N}$, let $\partial^{(k,M)}_\ast G$ denote $k$-tuples of distinct elements of the Morse boundary so that every pair of points in the $k$-tuple are joined by a bi-infinite $M$-Morse geodesic in $X$ (this set is $G$-invariant). The next set of definitions are from \cite{charneycordesmurray:quasi} or \cite{mousleyrussell:quasi-mobius}.

Let $H$ be a second finitely generated group. A map $h \colon \partial_\ast G \to \partial_\ast H$ is \emph{2-stable} if for each Morse $M$ there is a Morse gauge $M'$ so that $h(\partial_\ast^{(2,M)} G) \subseteq \partial_\ast^{(2,M')} H$. 

For each triple $(a,b,c) \in \partial_\ast^{(3,M)} G$ a point $x \in G$ is a \emph{$K$-center} for $(a,b,c)$ if $x$ is within $K$ of all three sides of any $M$-Morse ideal triangle with endpoints $a,b,c$. For each $M$, there is number $K_M$ so that for any $(a,b,c) \in \partial_\ast^{(3,M)} G$ 
the set of $K_M$-centers is non-empty. Moreover, there exist $D = D(M)$ so that for each $(a,b,c) \in \partial_\ast^{(3,M)} G$, the set of $K_M$-centers, $m(a,b,c)$, has diameter at most $D$. For any tuple $(a,b,c,d) \in \partial_\ast^{(4,M)} G$, the \emph{$M$-cross ratio} is $$ [a,b,c,d]_M := \diam(m(a,b,c) \cup m(a,d,c)).$$ A $2$-stable map $h\colon \partial_\ast G \to \partial_\ast H$ is \emph{quasi-m\"obius} if for every pair of Morse gauges $M$ and $M'$ with  $h(\partial_\ast^{(2,M)} G) \subseteq \partial_\ast^{(2,M')} H$ there exists an increasing function $\psi$ so that $$ [h(a),h(b),h(c),h(d)]_{M'} \leq \psi([a,b,c,d]_M).$$

Charney, Cordes, and Murray established that quasi-m\"obius maps on the Morse boundary characterize quasi-isometries of the group.

\begin{thm}[{\cite[Theorems 3.7 and 4.5] {charneycordesmurray:quasi}}]\label{thm:quasi-mobius = quasi-isometry} 
	Let $G$ and $H$ be finitely generated groups so that $\partial_\ast G$ contains at least 3 points. 
	\begin{enumerate}
		\item If $F\colon G \to H$ is a quasi-isometry, then the induced map $\partial F \colon \partial_\ast G \to \partial_\ast H$ is a quasi-m\"obius homeomorphism with quasi-m\"obius inverse.
		\item If $h \colon \partial_\ast G \to \partial_\ast H$ is a quasi-m\"obius homeomorphism with quasi-m\"obius inverse, then there exist a quasi-isometry $F \colon G \to H$ so that $\partial F  = h$.
	\end{enumerate}
\end{thm}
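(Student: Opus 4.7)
For part (1), the plan is first to establish 2-stability of $\partial F$ and then derive the quasi-m\"obius inequality from its hyperbolic prototype. Since $F$ is an $(L,A)$-quasi-isometry, any $M$-Morse geodesic in $G$ is sent to an $(L,A)$-quasi-geodesic whose image is uniformly Hausdorff-close to an $M'$-Morse geodesic in $H$ with $M'=M'(M,L,A)$; this yields $\partial F(\partial^{(2,M)}_\ast G)\subseteq \partial^{(2,M')}_\ast H$. For the cross-ratio estimate I would exploit the fact that the union of $M$-Morse geodesics through a basepoint sits inside a $\delta(M)$-hyperbolic ``Morse hull''. The centers $m(a,b,c)$ in $G$ are then at bounded Hausdorff distance from the hyperbolic centers of the ideal triangle with vertices $a,b,c$ inside this hull, and $F$ restricts to a quasi-isometric embedding from the $M$-hull into an $M'$-hull. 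Lemma~\ref{lem:hyp_qi_cross_ration} applied inside these hulls then delivers a function $\psi$ bounding the $M'$-cross-ratio downstairs by the $M$-cross-ratio upstairs.

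For part (2), I would construct $F$ by sending each $x\in G$ to a center of the $h$-image of an ideal triangle at $x$. Because $\partial_\ast G$ contains at least three points and the Morse boundary is visual, fix a Morse gauge $M_0$ and a reference tripod, then use cocompactness of the $G$-action on the Cayley graph to assign equivariantly to each $x\in G$ a triple $(a_x,b_x,c_x)\in \partial^{(3,M_0)}_\ast G$ admitting $x$ as a $K_{M_0}$-center. By 2-stability there is $M_0'$ with $(h(a_x),h(b_x),h(c_x))\in \partial^{(3,M_0')}_\ast H$, and $F(x)$ is defined to be any $K_{M_0'}$-center of this image triple.

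Both well-definedness up to bounded error and coarse Lipschitzness reduce to bounding $d_H(F(x),F(y))$ in terms of $d_G(x,y)$; the engine here is that $[a,b,c,d]_M$ is comparable to $d_G(m(a,b,c),m(a,d,c))$. Given $x,y\in G$ with $d_G(x,y)\leq R$, I would manufacture an auxiliary $4$-tuple whose upstairs cross-ratio is controlled by $R$ and whose two constituent centers lie uniformly near $x$ and $y$. Applying the quasi-m\"obius inequality for $h$ bounds the downstairs cross-ratio by $\psi(R)$, which translates back to a metric bound on $d_H(F(x),F(y))$. Performing the symmetric procedure with $h^{-1}$ yields a coarse inverse, and $\partial F = h$ follows because $F$ carries each Morse ray defining a boundary point $a$ into a bounded neighbourhood of a Morse ray defining $h(a)$.

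The principal obstacle is uniformity across the Morse stratification: the constants $K_M$, $D(M)$, and the target gauge $M_0'$ all depend on $M$, so one must argue that once the reference tripod has been chosen the construction produces genuinely uniform quasi-isometry constants rather than ones that deteriorate along the filtration by $M$. The $G$-equivariance of $\partial^{(k,M)}_\ast G$ is what rescues this, since working with a single orbit of tripods fixes $M_0$ once and for all. A secondary subtlety is confirming that the boundary extension of $F$ is exactly $h$ rather than merely close to $h$ in some looser sense, which requires tracking quasi-geodesics from $x$ toward $a\in \partial_\ast G$ under $F$ and identifying their endpoints with $h(a)$.
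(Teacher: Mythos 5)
The paper does not prove this statement: it is quoted as an external result of Charney--Cordes--Murray (their Theorems 3.7 and 4.5), so there is no internal proof to compare your attempt against. That said, your sketch follows the strategy of that source, which in turn adapts Paulin's argument for hyperbolic groups: part (1) via 2-stability together with uniform thinness of $M$-Morse ideal polygons, and part (2) via the tripod/center construction, with cocompactness of the $G$-action used to fix a single reference gauge $M_0$ and the quasi-m\"obius inequality converting cross-ratio control into the metric bound on $d_H(F(x),F(y))$. Two points in your outline deserve care. First, in part (1) you route the cross-ratio estimate through a ``$\delta(M)$-hyperbolic Morse hull'' and then invoke Lemma \ref{lem:hyp_qi_cross_ration}; the union of $M$-Morse rays from a basepoint is not a geodesic space, and its Gromov-type boundary must first be identified with the corresponding stratum of $\partial_\ast G$ before that lemma applies, so this step either needs the Cordes--Hume stratification results as a black box or should be replaced by a direct thin-quadrilateral computation as in the source. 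Second, in part (2) the well-definedness and coarse-Lipschitz step is the technical heart of the whole theorem: you assert that an auxiliary $4$-tuple with upstairs cross-ratio controlled by $R$ and with centers uniformly near $x$ and $y$ can be ``manufactured,'' but this is precisely where the work lies (one must mix coordinates of the two tripods at $x$ and $y$ and control several cross-ratios simultaneously, and also handle the degenerate cases where boundary points of the two tripods coincide or are too close), and your write-up does not carry it out. As a blueprint the proposal is faithful to the cited proof; as a proof it leaves the hardest estimates unestablished.
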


\subsection{The Morse boundary  and unbounded products}
For the remainder of the appendix, we fix a finitely generated group $G$  and a finitely generated Cayley graph, $X$, of  $G$. We assume $X$ has an HHS structure $\s$ with unbounded products and let $S \in \s$ be the $\nest$-maximal domain. As described in \cite[Section 3]{abbottbehrstockdurham:largest}, we can assume that $\C S$ is a graph obtained from $X$ by adding additional edges between the vertices. In this case, $\pi_S \colon X \to \C S$ is taken to be  the inclusion map.

Abbott, Behrstock, and Durham showed that in the presence of unbounded products, Morse geodesic are characterized by projecting  to  (parameterized) quasi-geodesics in $\C S$. 

\begin{thm}[{\cite[Corollary 6.2]{abbottbehrstockdurham:largest}}]\label{thm:ABD_Morse}
	Let $(X,\s)$ be an HHS with unbounded product and $S\in\s$ be the $\nest$-maximal domain. Let $\gamma$ be a geodesic in $X$. 
	\begin{enumerate}
		\item If $\pi_S \circ \gamma$ is a parameterized $(\lambda,\lambda)$-quasi-geodesic in $\C S$, then $\gamma$ is $M$-Morse for some Morse gauge $M$ determined by $\lambda$ and $\s$.
		\item If $\gamma$ is $M$-Morse, then there exists $\lambda = \lambda(M,\s) \geq 1$ so that $\pi_S \circ \gamma$ is a parameterized $(\lambda,\lambda)$-quasi-geodesic in $\C S$.
	\end{enumerate}
\end{thm}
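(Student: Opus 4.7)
The strategy is to prove both implications by combining the hierarchical distance formula with the bounded geodesic image theorem, using the nest-maximality of $S$ to reduce control of $X$-distance to control of $\C S$-distance together with the coordinate projections to all strictly nested domains. The one crucial contribution of the unbounded products hypothesis is that for each non-maximal $U \in \s$ there is an associated standard product region in $X$ with an unbounded factor orthogonal to $U$; a path confined to such a region can be made arbitrarily long in $X$ while its image in $\C S$ stays uniformly bounded.

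For part (2), assume $\gamma$ is $M$-Morse. The upper linear bound on $\pi_S \circ \gamma$ is immediate since $\pi_S$ is coarsely Lipschitz. For the lower bound, suppose for contradiction that $\gamma$ has subsegments $\gamma|_{[s,t]}$ for which the ratio $|t-s|/d_S(\gamma(s),\gamma(t))$ is arbitrarily large. By the distance formula, some non-maximal domain $U$ must carry a large coordinate projection $d_U(\gamma(s),\gamma(t))$, and consistency places both endpoints near the product region associated to $U$. Exploiting the unbounded orthogonal factor of that product region, one can construct an alternative $(K,K)$-quasi-geodesic in $X$ from $\gamma(s)$ to $\gamma(t)$ whose projection to $\C S$ stays in a uniformly bounded set but which eventually leaves every fixed neighbourhood of $\gamma|_{[s,t]}$; this contradicts the $M$-Morse property.

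For part (1), let $\sigma$ be any $(K,K)$-quasi-geodesic in $X$ whose endpoints lie within bounded distance of those of $\gamma$. Composing with the coarsely Lipschitz map $\pi_S$ gives a coarse quasi-geodesic of $\C S$, and hyperbolicity forces $\pi_S \circ \sigma$ to fellow-travel the genuine $(\lambda,\lambda)$-quasi-geodesic $\pi_S \circ \gamma$ with uniform constants. For each non-maximal $U$, the bounded geodesic image theorem converts this $\C S$-tracking into a uniform upper bound on $d_U(\sigma,\gamma)$: were $\sigma$ and $\gamma$ far apart in $U$ at matched parameters, $\pi_S \circ \gamma$ would be forced to leave a bounded neighbourhood of the relative projection of $U$ to $\C S$, contradicting the fellow-traveling. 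Feeding these bounds back into the distance formula yields a uniform $X$-Hausdorff bound between $\sigma$ and $\gamma$, producing a Morse gauge depending only on $K$, $\lambda$, and $\s$.

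The hard part will be part (2): mere hyperbolicity of $\C S$ does not preclude a geodesic from stalling in a bounded region of $X$ where no coordinate projection witnesses an unbounded detour, and building an honest quasi-geodesic detour (rather than just an inefficient path) through a product region requires some care to ensure the concatenation remains a quasi-geodesic with uniform constants. The unbounded products hypothesis is precisely what guarantees that whenever $d_S$ is small but $d_X$ is large, an unbounded orthogonal direction is actually available to realize the required detour and thereby break the Morse property.
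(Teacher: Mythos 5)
The paper does not actually prove this statement; it is imported verbatim from Abbott--Behrstock--Durham \cite[Corollary~6.2]{abbottbehrstockdurham:largest}, so the comparison has to be made against that reference. Your outline assembles the right ingredients (detours through standard product regions via the unbounded orthogonal factor, the distance formula, the bounded geodesic image theorem), and this is indeed the shape of the ABD argument, but the logical order in part~(2) contains a genuine gap. You pass from ``$|t-s|/d_S(\gamma(s),\gamma(t))$ arbitrarily large'' to ``some non-maximal $U$ carries a large projection $d_U(\gamma(s),\gamma(t))$'' via the distance formula. The distance formula only tells you that the \emph{sum} over proper domains of the terms exceeding the threshold $L$ is large; that sum can be large because many distinct domains each contribute roughly $L$, with no single projection ever exceeding, say, $2L$. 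A single moderate projection does not produce a detour long enough to violate a fixed Morse gauge, so in that scenario your contradiction never materializes. The correct order (and the one in the reference) is: first prove that an $M$-Morse geodesic has its projections to \emph{every} non-maximal domain uniformly bounded by some $B=B(M,\s)$ --- this is where the product-region detour lives, run in the contrapositive on a sequence of pairs of points and domains with projections tending to infinity --- and only then apply the distance formula with threshold raised above $B$, which kills all proper-domain terms simultaneously and yields the lower quasi-geodesic bound for $\pi_S\circ\gamma$.

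In part~(1), the step ``composing with the coarsely Lipschitz map $\pi_S$ gives a coarse quasi-geodesic of $\C S$'' is not justified by coarse Lipschitzness alone: the coarsely Lipschitz image of a quasi-geodesic can backtrack arbitrarily. What you need is the standard but nontrivial HHS fact that quasi-geodesics in $X$ project to \emph{unparameterized} quasi-geodesics in each $\C W$; with that cited, the fellow-traveling with the genuinely parameterized quasi-geodesic $\pi_S\circ\gamma$ and the subsequent bounded-geodesic-image argument go through. You should also justify why only boundedly many proper domains can carry an above-threshold projection between matched points of $\sigma$ and $\gamma$ before ``feeding the bounds back into the distance formula,'' since the distance formula controls a sum, not a supremum. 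None of these defects is fatal --- each is repaired by quoting the appropriate lemma from the HHS literature --- but as written the proof of part~(2) does not close.
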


Theorem \ref{thm:ABD_Morse} implies there is a continuous injection $\partial \pi_S \colon \partial_\ast X \to \partial \C S$, which is a continuous extension of $\pi_S \colon X \to \C S$, see \cite[Lemma A.6]{russell:extensions}.

\subsection{Downward relative projections and coboundedness}
For each $W \in \s -\{S\}$ define $\rho_W^S \colon \C S^{(0)} \to \C W $ by $\rho_W^S(v) = \pi_W \circ \pi_S^{-1}(v)$. To extend this to points in $\partial \C S$ we need the next lemma, which is a basic consequence of the bounded geodesic image axiom of an HHS.

\begin{lemma}\label{lem:BGI_quasi-geodesic}
	Let $(X,\s)$ be an HHS with hierarchy constant $E$ and $S$ be the $\nest$-maximal domain of $\s$. For each $\kappa \geq 1$ there exist $\nu = \nu (\kappa,E)$ so that for all $W \in \s -\{S\}$ we have:
	\begin{enumerate}
		\item if $\gamma$ is a $(\kappa,\kappa)$-quasi-geodesic in  $\C W$ so that $\gamma \cap \cal N_\nu(\rho_S^W) = \emptyset$, then $$\diam(\rho_W^S(\gamma)) \leq E.$$
		\item  if $\gamma_1$ and $\gamma_2$ are  $(\kappa,\kappa)$-quasi-geodesic rays in  $\C S$ that both represents the same point $p \in \partial \C S$ and $\gamma_i \cap \cal N_\nu(\rho_S^W) = \emptyset$ for $i = 1$ and $2$, then $d_{Haus}(\rho_W^S(\gamma_1), \rho_W^S (\gamma_2)) \leq \nu.$
	\end{enumerate}
\end{lemma}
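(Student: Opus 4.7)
The plan is to upgrade the Bounded Geodesic Image (BGI) axiom of the HHS structure $\s$ from geodesics to $(\kappa,\kappa)$-quasi-geodesics using the stability (Morse) lemma in the $\delta$-hyperbolic space $\C S$. Let $E$ denote the hierarchy constant of $\s$, which bounds both the BGI constant and the coarse Lipschitz constants of the downward projection $\rho_W^S \colon \C S^{(0)} \to \C W$, and let $R = R(\kappa,\delta)$ be the Morse/stability constant controlling the Hausdorff distance between a $(\kappa,\kappa)$-quasi-geodesic and a geodesic with matching endpoints in $\C S$. I will choose $\nu = \nu(\kappa,E)$ large enough that each of the arguments below produces a final bound of at most $\nu$; in particular $\nu \geq E+R$.

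For part (1), given vertices $x,y \in \gamma$, let $[x,y]$ be a geodesic in $\C S$ joining them. The Morse lemma places $[x,y]$ in the $R$-neighborhood of the subsegment of $\gamma$ from $x$ to $y$, so $[x,y]$ avoids $\cal N_E(\rho_S^W)$ provided $\nu \geq E+R$. The BGI axiom then gives $d_{\C W}(\rho_W^S(x),\rho_W^S(y)) \leq E$. Taking the supremum over pairs of vertices on $\gamma$ (and using coarse Lipschitzness of $\rho_W^S$ to pass between vertices and nearby points of $\gamma$ when needed) bounds $\diam_{\C W}(\rho_W^S(\gamma))$ by a constant depending only on $\kappa$ and $E$, which we absorb into $E$ after enlarging it.

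For part (2), first apply part (1) to each $\gamma_i$ to obtain $\diam_{\C W}(\rho_W^S(\gamma_i)) \leq E$. Since $\gamma_1$ and $\gamma_2$ are $(\kappa,\kappa)$-quasi-geodesic rays in the $\delta$-hyperbolic space $\C S$ converging to the same boundary point $p$, their tails are uniformly Hausdorff-close: standard slim-triangle arguments in a hyperbolic space produce points $x_i \in \gamma_i$ with $d_{\C S}(x_1,x_2) \leq R'$ for some $R' = R'(\kappa,\delta)$. Coarse Lipschitzness of $\rho_W^S$ then bounds $d_{\C W}(\rho_W^S(x_1),\rho_W^S(x_2))$ by a constant $R'' = R''(\kappa,E)$, and combining with the diameter bound from part (1) yields $d_{Haus}(\rho_W^S(\gamma_1),\rho_W^S(\gamma_2)) \leq E + R'' \leq \nu$.

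The substantive content is just the BGI axiom of $\s$ together with the metric geometry of the hyperbolic space $\C S$; the rest is bookkeeping to combine constants into a single $\nu$. The only mildly delicate point is obtaining the pair $(x_1,x_2)$ in part (2) without assuming $\gamma_1$ and $\gamma_2$ share a basepoint, which is handled by the asymptoticity of quasi-geodesic rays to a common boundary point in a hyperbolic space.
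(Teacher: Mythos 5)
The paper gives no proof of this lemma, dismissing it as "a basic consequence of the bounded geodesic image axiom," and your argument — stability of $(\kappa,\kappa)$-quasi-geodesics in the $E$-hyperbolic space $\C S$ to reduce to geodesic segments between pairs of points, BGI applied to those segments, and asymptoticity of quasi-geodesic rays to a common point of $\partial \C S$ for part (2) — is exactly the standard argument being alluded to, and it is correct (note that the "$\C W$" in part (1) of the statement is a typo for $\C S$, which you have silently and correctly repaired). One caution on your phrasing: $\rho_W^S$ is not coarsely Lipschitz on all of $\C S$, and the bound on $\diam\bigl(\rho_W^S(x_1)\cup\rho_W^S(x_2)\bigr)$ for nearby points is itself an instance of BGI, valid only when a geodesic from $x_1$ to $x_2$ avoids $\cal N_E(\rho_S^W)$ — which your choice $\nu \geq E + R'$ does guarantee at every place you invoke it, since the relevant points all lie on quasi-geodesics avoiding $\cal N_\nu(\rho_S^W)$.
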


Now for each $p \in \partial \C S$, define $\partial \rho_W^S$ as follows: let $Z$ be the set of $(1,20E)$-quasi-geodesics from the basepoint of $\partial \C S$ to $p$. Let $\nu = \nu(E)$ be the constant from Lemma \ref{lem:BGI_quasi-geodesic} for $\kappa = 20E$. For $W \in \s -\{S \}$, let $Z_W$ be the subset of $Z$ that is at least $\nu$ far from $\rho_S^W$. Define $\partial \rho_W^S (p)$ to be the $\nu$-bounded diameter set $\rho_W^S(Z_W)$.

If $p$ is a point in the Morse boundary $ \partial_\ast X$, we define $\partial \pi_W(p)$ as $\partial \rho_W^S(\partial \pi_S(p))$.

If $x,y$ are are points in any combination of $X$, $\partial_\ast X$, $\C S$, or $\partial \C S$,  we say $x$ and $y$ are \emph{$C$-cobounded} if the union of their projections to $\C W$ for each $W \in \s -\{S\}$ has diameter at most $C$. Here the projection is under $\pi_W$, $\partial \pi_W$, $\rho_W^S$, or $\partial \rho_W^S$ depending on which space $x$ and $y$ are in.

In the language of coboundedness, Corollary 6.2 of \cite{abbottbehrstockdurham:largest} becomes the following.

\begin{thm}[Restatement of {\cite[Corollary 6.2]{abbottbehrstockdurham:largest}}]\label{thm:cobounded=Morse}
	Let $(X, \s)$ be a proper HHS with unbounded products and hierarchy constant $E$. Let $S \in \s$ be the $\nest$--maximal domains and $x,y \in X \cup \partial_\ast X$.
	\begin{enumerate}
		\item For all Morse gauges $M$, there exist a constant $C \geq 0$, depending on $M$ and $E$, so that if $x$ and $y$ are joined by an $M$-Morse geodesic, then $x$ and $y$ are $C$-bounded.
		\item For all $C \geq 0$, there exists a Morse gauge $M$, depending on $C$ and $E$, so that if $x$ and $y$ are $C$-cobounded, then there exists an $M$-Morse geodesic from $x$ to $y$. 
		\item For all $C \geq 0 $ and $p \in \partial \C S$, if $\pi_S(x)$ and $p$ are $C$-cobounded, then there exists a Morse gauge $M$, depending on $C$ and $E$, and $z \in \partial_\ast X$ so that $\partial \pi_S (z) = p$ and the geodesic from $x$ to $z$ is $M$-Morse.
	\end{enumerate}
\end{thm}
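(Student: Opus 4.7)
This theorem is essentially a translation of the Morse characterization in \cite{abbottbehrstockdurham:largest} into the coboundedness language of this appendix. The bridge is Theorem \ref{thm:ABD_Morse}, which equates being Morse in $X$ with projecting to a parameterized quasi-geodesic in $\C S$; Lemma \ref{lem:BGI_quasi-geodesic} then converts a parameterized quasi-geodesic in $\C S$ into bounded projections to every non-maximal $\C W$. I plan to run this translation in both directions for the three parts, and handle boundary points by limit arguments.

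For (1), assume first $x,y \in X$ and let $\gamma$ be an $M$-Morse geodesic joining them. Theorem \ref{thm:ABD_Morse}(2) gives $\lambda = \lambda(M,\s)$ so that $\pi_S\circ\gamma$ is a parameterized $(\lambda,\lambda)$-quasi-geodesic in $\C S$. Fix $W\in\s\setminus\{S\}$ and let $\nu=\nu(\lambda,E)$ from Lemma \ref{lem:BGI_quasi-geodesic}. If $\pi_S\circ\gamma$ avoids $\cal N_\nu(\rho_S^W)$, Lemma \ref{lem:BGI_quasi-geodesic}(1) bounds $\diam(\rho_W^S(\pi_S\circ\gamma))$, hence $\diam(\pi_W(\gamma))$, by $E$. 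Otherwise, split $\gamma$ at the first and last entries of $\pi_S\circ\gamma$ into $\cal N_\nu(\rho_S^W)$; the two outer arcs fall into the previous case, while the middle arc projects into a uniformly bounded neighborhood of $\rho_S^W$, yielding a total bound $C=C(M,E)$. For $x$ or $y$ in $\partial_\ast X$ one replaces $\pi_W$ with $\partial\pi_W$ and uses that an $M$-Morse ray's $\C S$-projection is a parameterized quasi-geodesic representing $\partial\pi_S(x)$, so the quasi-geodesics defining $\partial\rho_W^S$ and the projected Morse ray are compatible.

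For (2), suppose $x,y \in X$ are $C$-cobounded. Coboundedness forces every non-maximal term in the HHS distance formula to contribute boundedly, so there exists a hierarchy path $\gamma$ from $x$ to $y$ whose length accumulates entirely in $\C S$, making $\pi_S\circ\gamma$ a parameterized $(\lambda,\lambda)$-quasi-geodesic with $\lambda=\lambda(C,E)$. Theorem \ref{thm:ABD_Morse}(1) then produces $M=M(C,E)$ so that $\gamma$ is $M$-Morse, and since any geodesic from $x$ to $y$ lies at bounded Hausdorff distance from $\gamma$ it is itself $M'$-Morse for a slightly weaker gauge. For (3), fix $x\in X\cup\partial_\ast X$ and $p\in\partial\C S$ with $\pi_S(x)$ and $p$ cobounded, pick a $(1,20E)$-quasi-geodesic ray $\alpha$ in $\C S$ from $\pi_S(x)$ to $p$, and take preimages $y_n\in X$ under the inclusion $\pi_S$ of points on $\alpha$ tending to $p$. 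Lemma \ref{lem:BGI_quasi-geodesic}(2) plus the definition of $\partial\rho_W^S$ guarantees that $x$ and each $y_n$ are uniformly $C'$-cobounded, so (2) joins them by uniformly $M$-Morse geodesics $\gamma_n$. An Arzel\`a--Ascoli argument on the compact family of $M$-Morse rays from $x$ extracts a limit Morse ray with endpoint $z\in\partial_\ast X$, and $\partial\pi_S(z)=p$ because $\pi_S\circ\gamma_n$ tracks $\alpha$ out to $p$.

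The main obstacle will be (2), specifically the passage from a hierarchy path to an actual geodesic while keeping the Morse gauge uniform. The HHS distance formula readily supplies a hierarchy path whose $\C S$-projection is a parameterized quasi-geodesic, but producing a geodesic of $X$ with this property requires matching hierarchy paths and geodesics via HHS stability, and tracking how coboundedness constants propagate through this identification. The same uniformity issue returns in (3), where one must guarantee that the coboundedness bounds between $x$ and $y_n$ remain uniform as $y_n\to p$; this depends on choosing $\alpha$ so that its tail stays $\nu$-far from every $\rho_S^W$ contributing to the coboundedness of $\pi_S(x)$ and $p$, which is the content of Lemma \ref{lem:BGI_quasi-geodesic}(2).
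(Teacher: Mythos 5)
First, a point of comparison that matters here: the paper does not prove this statement at all. It is presented as a restatement, in the coboundedness language, of Corollary 6.2 of Abbott--Behrstock--Durham and is used as a black box. Your proposal is therefore an attempt to rederive a cited result from Theorem \ref{thm:ABD_Morse} and Lemma \ref{lem:BGI_quasi-geodesic}; the overall translation strategy (Morse $\leftrightarrow$ parameterized quasi-geodesic in $\C S$ $\leftrightarrow$ uniformly bounded projections to all proper domains) is indeed the right bridge, and is essentially how the cited result is proved.

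There is, however, a genuine gap in your part (1), at the middle-arc step. Knowing that the $\pi_S$-image of the middle arc lies in a bounded neighborhood of $\rho_S^W$ says nothing by itself about the diameter of its image in $\C W$: the content of the bounded geodesic image axiom is precisely that all large movement in $\C W$ happens while the $\C S$-coordinate lingers near $\rho_S^W$, so ``close to $\rho_S^W$ in $\C S$'' is fully compatible with arbitrarily large $\C W$-diameter. As written, your argument would prove (1) for an arbitrary HHS, where the statement is false. The step is repairable, but only by actually using the parameterization supplied by Theorem \ref{thm:ABD_Morse}(2): the endpoints of the middle arc are at $\C S$-distance at most $2\nu+E$ (since $\rho_S^W$ is an $E$-bounded set), so the fact that $\pi_S\circ\gamma$ is a \emph{parameterized} $(\lambda,\lambda)$-quasi-geodesic forces the middle arc to have length at most $\lambda(2\nu+E)+\lambda^2$ in $X$, and coarse Lipschitzness of $\pi_W$ then bounds its $\C W$-diameter. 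This is also exactly where the unbounded-products hypothesis enters the argument, so it should not be elided.

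Two smaller remarks. In (2), the hierarchy-path detour you identify as the main obstacle is avoidable: work directly with a geodesic $\gamma$ of $X$ from $x$ to $y$. Geodesics in an HHS project to uniform unparameterized quasi-geodesics in every domain, so $C$-coboundedness of $x,y$ bounds $\diam\left(\pi_W(\gamma(s))\cup\pi_W(\gamma(t))\right)$ uniformly for all $s,t$ and all $W\ne S$; the distance formula with threshold above that bound then gives $|s-t|\asymp \dist_S(\gamma(s),\gamma(t))$, so $\pi_S\circ\gamma$ is a parameterized quasi-geodesic and Theorem \ref{thm:ABD_Morse}(1) applies to $\gamma$ itself, with no transfer of Morseness needed. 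In (3), the uniformity in $n$ of the coboundedness of $x$ and $y_n$ still needs an explicit argument for those $W$ whose $\rho_S^W$ the ray $\alpha$ approaches: one should note that, $\rho_S^W$ being a bounded set and $\alpha$ a quasi-geodesic ray, the tail of $\alpha$ eventually avoids $\cal N_\nu(\rho_S^W)$ permanently, so that Lemma \ref{lem:BGI_quasi-geodesic} pins $\pi_W(y_n)$ within $\nu$ of $\partial\rho_W^S(p)$ for all large $n$.
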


\subsection{Quasi-m\"obius maps induced by quasi-isometry of $\C S$}	\label{sec:induced_maps}
For our final section, we require that $G$ is a group of \emph{automorphisms} of the HHS  $(X,\s)$.  This means that $G$ acts on $\s$ by $\nest$-, $\perp$-, and $\trans$-preserving bijections, and for each  $W \in \s$ and $g\in G$, there exists an isometry $g_W \colon \C W \rightarrow \C gW$ satisfying the following for all $V,W \in \s$ and $g,h \in G$.
\begin{itemize}
	\item The map $(gh)_W \colon \C W  \to \C ghW$ is equal to the map $g_{hW} \circ h_W \colon \C W \to \C ghW$.
	\item For each $x \in X$, $g_W(\pi_W(x))$ uniformly coarsely equals  $\pi_{gW}(g \cdot x)$.
	\item If $V \trans W$ or $V \propnest W$, then $g_W(\rho_W^V)$ uniformly coarsely equals $\rho_{gW}^{gV}$.
\end{itemize}
This equivariance hypothesis is required to establish the next lemma, which is the important technical step in establishing that quasi-isometries of $\C S$ preserve being cobounded when $\C S$ is one-ended.

\begin{lemma}\label{lem:QI_preserve_coboundedness}
	Let $G$ be a finitely generated group and $X$ any Cayley graph with respect to a finite generating set. Let $\s$ be an HHS structure for $X$ with hierarchy constant $E$ and suppose $G$ is a group of automorphisms of $(X,\s)$. Let $S \in \s$ be the $\nest$--maximal domains and assume $\C S$ is one-ended. For all $C, R \in \mathbb{N}$, there exists $c,r \in \mathbb{N}$ so that the following holds: Suppose $x,y$ are elements of $G$ that are $C$-bounded, and let $z$ be the midpoint of a $M$-Morse geodesic from $x$ to $y$ in $X$, where $M$ is determined by $C$ and $E$ as in Theorem \ref{thm:cobounded=Morse}. If $d_X(x,y) = 2r$, then 
	\begin{enumerate}
		\item $\pi_S(x)$ and $\pi_S(y)$ are at least $R+1$ far from $\pi_S(z)$ and
		\item there is a path $\eta$ in $\C S$ of length less than $c$  that connects $\pi_S(x)$ to $\pi_S(y)$ in $\C S - B_R(\pi_S(z))$.
	\end{enumerate}  
\end{lemma}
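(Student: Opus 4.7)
The plan is to prove part (1) directly from Theorem \ref{thm:ABD_Morse} and to prove part (2) by using the $G$-action to reduce to finitely many configurations, then invoking one-endedness of $\C S$.

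For part (1), Theorem \ref{thm:cobounded=Morse} supplies a Morse gauge $M = M(C, E)$ so that the geodesic $[x, y]$ is $M$-Morse, and Theorem \ref{thm:ABD_Morse}(2) then yields $\lambda = \lambda(M, \s) \geq 1$ so that $\pi_S \circ [x, y]$ is a parameterized $(\lambda, \lambda)$-quasi-geodesic in $\C S$. Since $z$ is the midpoint with $d_X(x, z) = d_X(z, y) = r$, the quasi-geodesic lower bound gives $d_{\C S}(\pi_S(x), \pi_S(z)) \geq r/\lambda - \lambda$ and similarly for $\pi_S(y)$, so taking $r \geq \lambda(R + \lambda + 1)$ already secures part (1).

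For part (2), the crucial step is that because $G$ acts by automorphisms of $(X, \s)$, the element $z^{-1}$ induces an isometry of $\C S$ coarsely commuting with $\pi_S$; translating $(x, y, z)$ by $z^{-1}$ therefore reduces us to the case $z = e$ without changing $C$-coboundedness or the Morse gauge. Since $X$ is a Cayley graph on a finite generating set, the $X$-ball of radius $r$ around $e$ is finite, so only finitely many pairs $(x, y)$ can arise in this reduced setting. One-endedness of $\C S$, combined with the $G$-action being transitive on $V(\C S) = G$ via the isometries $g_S$, supplies an escape radius $R' = R'(R)$ such that every vertex at $\C S$-distance greater than $R'$ from $\pi_S(e)$ lies in the unique infinite component of $\C S - B_R(\pi_S(e))$. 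Enlarging $r$ further so that $r/\lambda - \lambda > R'$ then places both $\pi_S(x)$ and $\pi_S(y)$ in this infinite component; a path between them therefore exists, and taking $c$ to be one more than the maximum such path length over the finitely many translated configurations, together with pushing back by $z$, yields the required bounded-length detour.

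The main obstacle is establishing the uniform one-endedness used in the last step: that the finite connected components of $\C S - B_R(\pi_S(e))$ are confined to some ball $B_{R'}(\pi_S(e))$ with $R'$ depending only on $R$. This is where the one-endedness hypothesis on $\C S$ does its essential work, and it is upgraded from a purely topological statement into a uniform quantitative bound by exploiting the (vertex-)transitivity of the induced $G$-action on $\C S$; once this is in hand, the rest of part (2) is a finite compactness argument.
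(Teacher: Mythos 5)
Your proposal is correct and follows essentially the same route as the paper: part (1) comes from the quantitative separation in $\C S$ of cobounded pairs joined by Morse geodesics (you get it from Theorem \ref{thm:ABD_Morse}, the paper from the uniqueness axiom applied to the $C'$-cobounded pairs $x,z$ and $y,z$ — the same estimate), and part (2) comes from one-endedness for the existence of an avoiding path together with the finiteness of $G$-orbits of such triples and the coarse $G$-equivariance of $\pi_S$ to extract the uniform length bound $c$. Your additional ``escape radius'' step, ensuring $\pi_S(x)$ and $\pi_S(y)$ actually lie in the unbounded component of $\C S - B_R(\pi_S(z))$, addresses a point the paper's proof passes over silently, so including it is a refinement rather than a departure.
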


\begin{proof}
	Let $x$ and $y$ be elements of $G$ that are $C$-bounded and $z$ be the  the midpoint of a $M$-Morse geodesic from $x$ to $y$, where $M$ is determined by $C$ and $E$ as in Theorem \ref{thm:cobounded=Morse}. Thus, $x,z$ and $y,z$ are $C'$-cobounded for some $C'$ depending on $C$. Hence the uniqueness axiom of an HHS implies that for each $R \geq 0$, there exists $r = r(R, E,C)$ so that $\dist_X(x,z) = \dist_X(y,z) \geq r$ implies $\dist_S(x,z),\dist_S(y,z) \geq R+1$. Since $\C S$ is one-ended, we can connect $\pi_S(x)$ to $\pi_S(y)$ by a path $\eta_{xy}$ that avoids $B_R(\pi_S(z))$.
	
	Now, since balls in $X$ contain finitely many elements of $G$, for each $r$, there only exists finitely many $G$-orbits of triples of $x,y,z$ so that $d_X(x,y)=2r$ and $x,y$ are joined by an $M$-Morse geodesic with midpoint $z$. Since $\pi_S$ is coarsely equivariant with respect to the the action of $G$, for each $R$ and $C$, we can pick $\eta_{xy}$ so that its length in $\C S$ is at most $c = c(R,C,E)$.
\end{proof}

We now fix two finitely generated groups $G$ and $H$, and let $X$ and $Y$, respectively, be Cayley graphs with respect to finite generating sets. We assume there are HHS structures $\s$ and $\mathfrak T$ for $X$ and $Y$ respectively so that $\s$ and $\mathfrak T$ have unbounded products and that $G$ and $H$ are groups of automorphism of $(X,\s)$ and $(Y,\mathfrak T)$ respectively. Let $S\in \s$ and $T \in \mathfrak T$ be the respective $\nest$-maximal domains, and let $E$ be the hierarchy constant for both $\s$ and $\mathfrak T$.

We now prove that when $\C S$ is one-ended, quasi-isometries $\C S \to \C T$ will preserve coboundedness and hence induce a quasi-m\"obius map on the Morse boundary.  Our proof of Theorem \ref{thm:qi_curve_graph_implies_qi_spaces} is inspired by work of Rafi and Schleimer in the case of the mapping class group \cite{rafischleimer_rigid}.

\begin{thm}\label{thm:qi_curve_graph_implies_qi_spaces}
	Suppose $f \colon \C S \to \C T$ is a $(\lambda,\epsilon)$-quasi-isometric embedding. Let $\partial f \colon \partial \C S \to \partial \C T$ be the topological embedding induced by $f$. If $\C S$ is one-ended, then: 
	\begin{enumerate}
		\item \label{item:induce_morse_map} 	There exists a topological embedding $h\colon \partial_* G \to \partial_* H$ so that $$\partial \pi_T \circ h(p) = \partial f \circ \partial \pi_S(p)$$ for all $p \in \partial_\ast G$.
		
		\item \label{item:2-stable} If $p,q \in \partial \C S$ are $C$-cobounded, then $\partial f(p)$ and $\partial f(q)$ are $C'$-cobounded for some $C' = C'(C,\lambda,\epsilon, E)$. In particular, the map $h$ from \eqref{item:induce_morse_map} is $2$-stable.
		
		\item \label{item:quasi-mobius} The map $h$  from \eqref{item:induce_morse_map} is quasi-m\"obius with increasing function $\psi$ determined by $\lambda$, $\epsilon$, and $E$.
	\end{enumerate}
	%In particular, when $f$ is a $(\lambda,\epsilon)$-quasi-isometry, then the map $\partial f$ is a quasi-m\"obius homeomorphism with quasi-m\"obius inverse, and hence there exists a quasi-isometry $F \colon G \to H$ where $F$ induces the map $h\colon \partial_\ast G \to \partial_\ast H$.
\end{thm}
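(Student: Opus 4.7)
The plan is to establish the three parts in the order (2), (1), (3), since part (2)---that $f$ preserves coboundedness between points of $\partial \C S$---is the main technical content and unlocks the others. One-endedness of $\C S$ enters crucially through Lemma \ref{lem:QI_preserve_coboundedness}.

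For (2) I would argue by contradiction. Suppose $p,q \in \partial \C S$ are $C$-cobounded yet $\partial f(p), \partial f(q)$ fail to be $C'$-cobounded for any prescribed $C'$. Then some $W' \in \mathfrak T - \{T\}$ witnesses this, and by Lemma \ref{lem:BGI_quasi-geodesic}(1) every $(1,20E)$-quasi-geodesic $\alpha$ in $\C T$ from $\partial f(p)$ to $\partial f(q)$ must enter $\cal N_\nu(\rho_T^{W'})$ at some point $t_0$. Let $s_0 \in \C S$ be a preimage of $t_0$ under $f$; by quasi-geodesic stability any $(1,20E)$-quasi-geodesic $\alpha'$ in $\C S$ from $p$ to $q$ passes uniformly close to $s_0$. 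Approximate $p,q$ by vertices $x_n, y_n \in X \subseteq \C S$ along $\alpha'$ at equal $\C S$-distance from $s_0$ tending to infinity. Applying Lemma \ref{lem:BGI_quasi-geodesic} to the sub-rays of $\alpha'$ from $x_n, y_n$ to $p, q$, the coboundedness of $p,q$ passes to $(C+O(E))$-coboundedness of $x_n, y_n$ in $G$, so Theorem \ref{thm:cobounded=Morse}(2) produces an $M$-Morse geodesic $\gamma_n$ in $X$ from $x_n$ to $y_n$; quasi-geodesic stability applied to $\pi_S(\gamma_n)$ and $\alpha'$ in $\C S$ then gives that the midpoint $z_n$ of $\gamma_n$ satisfies $\pi_S(z_n) \approx s_0$.

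Now Lemma \ref{lem:QI_preserve_coboundedness} with parameter $R$ supplies a detour $\eta_n$ in $\C S$ of length at most $c(R)$ from $\pi_S(x_n)$ to $\pi_S(y_n)$ avoiding $B_R(\pi_S(z_n))$; its $f$-image $f(\eta_n)$ is a path in $\C T$ of length at most $\lambda c(R)+\epsilon$ avoiding a ball of radius $R/\lambda - O(1)$ around $t_0$, and hence around $\rho_T^{W'}$. Concatenating $f(\eta_n)$ with the two sub-rays of $\alpha$ from $\partial f(p)$ to a point near $f(x_n)$ and from a point near $f(y_n)$ to $\partial f(q)$ produces a bi-infinite path $\beta$ from $\partial f(p)$ to $\partial f(q)$ that avoids a large neighborhood of $\rho_T^{W'}$. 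Since the two sub-rays are $(1,20E)$-quasi-geodesics going to distinct boundary points and the detour has bounded length, $\beta$ is itself a $(1,\lambda c(R)+O(E))$-quasi-geodesic. Choosing $R$ so large that $R/\lambda$ exceeds $\nu(\lambda c(R)+O(E), E)$ and invoking Lemma \ref{lem:BGI_quasi-geodesic}(1) for $\beta$ forces $\diam(\rho_{W'}^T(\beta)) \leq E$, contradicting the hypothesis on the $\C W'$-projection.

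For (1), the $M$-Morse geodesic from a basepoint $x_0 \in X$ to $p \in \partial_\ast G$ makes $\pi_S(x_0)$ and $\partial \pi_S(p)$ cobounded by Theorem \ref{thm:cobounded=Morse}(1); part (2) transfers this to coboundedness of $f(\pi_S(x_0))$ and $\partial f \circ \partial \pi_S(p)$, and Theorem \ref{thm:cobounded=Morse}(3) then produces a unique $h(p) \in \partial_\ast H$ with $\partial \pi_T(h(p)) = \partial f(\partial \pi_S(p))$; injectivity and continuity of $h$ follow from those of $\partial \pi_S, \partial f, \partial \pi_T$, and $2$-stability is immediate from (2). For (3), I would combine Lemma \ref{lem:hyp_qi_cross_ration} for $f\colon \C S \to \C T$ with the uniform equivalence between Morse ideal triangle centers in $G$ and hyperbolic triangle centers in $\C S$ (Theorems \ref{thm:ABD_Morse} and \ref{thm:cobounded=Morse}) to translate the cross-ratio estimate across the boundaries, producing an essentially linear $\psi$. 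The main obstacle is the quantitative coupling in (2): one must ensure $\nu(\lambda c(R)+O(E), E)$ is eventually dominated by $R/\lambda$, which rests on $c(R)$ from Lemma \ref{lem:QI_preserve_coboundedness} (produced by a finite-$G$-orbit count) growing controllably, together with the polynomial dependence of $\nu(\kappa,E)$ on $\kappa$.
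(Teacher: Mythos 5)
Your overall strategy for part \eqref{item:2-stable} --- use one-endedness via Lemma \ref{lem:QI_preserve_coboundedness} to reroute around the place where the image quasi-geodesic meets $\cal N_\nu(\rho_T^{W'})$, then invoke the bounded geodesic image Lemma \ref{lem:BGI_quasi-geodesic} --- is the same idea as the paper's. But your execution has a genuine gap, which you yourself flag at the end: you concatenate the two tails of $\alpha$ with $f(\eta_n)$ into a single path $\beta$, declare it a $(1,\lambda c(R)+O(E))$-quasi-geodesic, and then want to apply Lemma \ref{lem:BGI_quasi-geodesic}(1) to all of $\beta$. That requires the threshold $\nu$ to be taken for quasi-geodesic constant $\kappa \approx \lambda c(R)+O(E)$, but $R$ was already chosen in terms of $\nu$, and $c(R)$ in terms of $R$; this is circular. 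The circle cannot be closed as you propose: $c(R)$ in Lemma \ref{lem:QI_preserve_coboundedness} comes from a finiteness-of-$G$-orbits argument (pick, for each of finitely many orbit representatives, \emph{some} path avoiding the ball, guaranteed only by one-endedness), so there is no effective growth control on $c(R)$, and nothing in Lemma \ref{lem:BGI_quasi-geodesic} gives ``polynomial dependence of $\nu(\kappa,E)$ on $\kappa$.'' The way to break the circularity --- and what the paper does --- is to never treat the concatenation as one quasi-geodesic: apply Lemma \ref{lem:BGI_quasi-geodesic} separately to the two tails (which are $(\kappa,\kappa)$-quasi-geodesics for a $\kappa$ fixed in advance, depending only on $\lambda,\epsilon,M,E$, and which avoid $\cal N_\nu(\rho_T^{W'})$ by the choice of $R$ and $r$), and to each of the at most $c$ \emph{edges} of the detour individually (each image $f(\eta_0)$ is a $(\kappa,\kappa)$-quasi-geodesic far from $\rho_T^{W'}$, hence has projection of diameter at most $E$). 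Then $c$ enters only the final additive bound $2E+cE+\nu$, not the choice of $\nu$.

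Two smaller points. First, part \eqref{item:induce_morse_map} does not follow formally from part \eqref{item:2-stable}: statement \eqref{item:2-stable} concerns pairs of points of $\partial\C S$, whereas for \eqref{item:induce_morse_map} you need coboundedness of the basepoint $\pi_T(b)$ with $\partial f(\partial\pi_S(p))$, i.e.\ the ray version of the rerouting argument. That version has an extra case your sketch omits: when the image quasi-geodesic ray enters $\cal N_\nu(\rho_T^W)$ at a time $t\le 2r+1$, there is no room to back up $r$ steps before the basepoint, and the paper handles this separately using the distance formula (only finitely many domains see a large projection of the initial segment, yielding a non-uniform but finite bound, which suffices for Theorem \ref{thm:cobounded=Morse}(3)). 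Second, your contradiction framing for \eqref{item:2-stable} is unnecessary overhead; the direct argument gives the explicit constant $C'=2E+cE+2\nu$ required by the statement. Part \eqref{item:quasi-mobius} as you describe it (Lemma \ref{lem:hyp_qi_cross_ration} plus the comparison of Morse-triangle centers in $G$ with centers in $\C S$) matches the paper, which cites Proposition 7 of \cite{mousleyrussell:quasi-mobius} for that comparison.
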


\begin{proof} %Note the final `in particular' clause will follow by applying the theorem to the quasi-isometry one of its quasi-inverses. Hence, we only need to establish \eqref{item:induce_morse_map}, \eqref{item:2-stable}, and \eqref{item:quasi-mobius}.
	Without loss of generality, we can assume the image of $f$ is contained in the vertices of $\C T$. Let $e$ be the identity in $G$ and $b$ be the element of $H$ so that $\pi_T(b) = f \circ \pi_S(e)$. Let $e$ and $b$ be the base points for  the Morse boundaries of $G$ and $H$ respectively.
	
	\textbf{Proof of \eqref{item:induce_morse_map}:} Let $p \in \partial_\ast G$ and $p_S = \partial \pi_S(p)$. We will first show that $\partial f(p_S) = \partial \pi_T(q_p)$ for some point $q_p \in \partial_\ast G$.  By Theorem \ref{thm:cobounded=Morse}, it suffices to show that $\partial f(p_S)$ and $\pi_T(b)$ are cobounded (although not necessarily uniformly).
	
	Let $\gamma \colon [0,\infty) \to G$ be the $M$-Morse geodesic ray from $e$ to $p$ and let $W \in \frak T$. There exist $C = C(M,E)$ so that any two points on $\gamma$ are $C$-cobounded by Theorem \ref{thm:cobounded=Morse}. By Theorem \ref{thm:ABD_Morse},  $\bar{\gamma} = f \circ \pi_S \circ \gamma \colon [0,\infty) \to \C T$ is a $(\kappa,\kappa)$-quasi-geodesic for some $\kappa = \kappa( \lambda, \epsilon, M, E) \geq 1$.  Without loss of generality, we can assume $\kappa \geq \max\{\lambda,\epsilon,20E\}$. Let $\nu=\nu(E,\kappa) \geq 0$ be the constant from Lemma \ref{lem:BGI_quasi-geodesic} so that $\diam(\rho_W^T(\bar{\gamma})) \leq E$ whenever $\bar{\gamma}$ is not within  $\nu$ of $\rho_T^W$. Let $R = \lambda(2\nu + \epsilon +1)$ and let $r, c \in \mathbb{N}$ be as in Lemma \ref{lem:QI_preserve_coboundedness} for this choice of $R$ and our given $C$.   
	Hence, we can assume there exists $t\in [0,\infty) \cap \mathbb{Z}$ so that 
	$\dist_T(\bar{\gamma}(t),\rho_T^W) \leq \nu$. 
	
	To start, assume $t >2r +1$. By Lemma \ref{lem:QI_preserve_coboundedness}, there exists a path $\eta$ from $\pi_S \circ \gamma(t-r)$ to $\pi_S\circ\gamma (t+r)$ in $ \C S$  so that $\eta$ has length at most $c$ and $\eta$ does not intersect the ball of radius $R$ around $\pi_S \circ \gamma(t)$. 
	Now consider a single edge $\eta_0$ of $\eta$.  Since $d_S(\eta_0,\pi_S\circ \gamma(t)) >R= \lambda(2\nu + \epsilon +1)$ and $\bar{\gamma} = f \circ \pi_S \circ \gamma$, we have
	\begin{equation*}
		d_T(f(\eta_0), \rho_T^W) \geq d_T\left(f(\eta_0),\bar{\gamma}(t)\right) - d_T\left(\bar{\gamma}(t),\rho_T^W\right) > \frac{1}{\lambda} R - \epsilon - \nu = \nu+1. \tag{$\ast$} \label{eq:BGI_condition}
	\end{equation*}

	Since $\kappa \geq \max\{\lambda, \epsilon\}$, $f(\eta_0)$ is a $(\kappa,\kappa)$-quasi-geodesic in $\C T$, thus \eqref{eq:BGI_condition} implies that $\diam(\rho_W^S(f(\eta_0))) \leq E$. Since $\eta$ has length $c$, this implies $\diam(\rho_W^T(f (\eta)) \leq cE$. From the proof of Lemma \ref{lem:QI_preserve_coboundedness}, $r$ is chosen so that for any $s \in [0,\infty)$ we have  $$|s-t| \geq r \implies d_S(\pi_S \circ \gamma(s), \pi_S\circ \gamma(t) ) \geq R+1.$$ Thus, our choice of $R$ ensures that $\bar{\gamma}\vert_{[0,t-r]}$ and $\bar{\gamma}\vert_{[t+r,\infty)}$ do not intersect $\cal N_\nu(\rho_T^W)$. Thus, $\diam(\rho_W^T(\bar{\gamma}([0,t-r])))$ and $\diam(\rho_W^T(\bar{\gamma}([t+r, \infty))))$ are both at most $E$ by Lemma \ref{lem:BGI_quasi-geodesic}. Since $\diam(\rho_W^T(f (\eta)) \leq cE$, we have $\diam(\rho_W^T(\bar{\gamma}\vert_{[0,t-r]})) \cup \rho_W^T(\bar{\gamma}\vert_{[t+r, \infty)})) \leq 2E + cE$. 
	
	Now $\pi_W(b) \subseteq \rho_W^T(\bar{\gamma}\vert_{[0,t-r]})$ because  $\pi_W(b) = \rho_W^T( f\circ \pi_S(e))$. Since $\bar{\gamma}\vert_{[t+r,\infty)}$ represents $\partial f(p_S)$ and does not intersect the $\nu$-neighborhood of $\rho_W^T$, Lemma \ref{lem:BGI_quasi-geodesic} says $\partial \rho_W^S(\partial f(p_S))$ is $\nu$-close to $\rho_W^T(\bar{\gamma}\vert_{[t+r,\infty)})$. Hence $$\diam(\rho_W^T(\pi_T(b)) \cup \partial \rho_W^T(\partial f(p_S))) \leq 2E+ cE +\nu.$$
	
	Now assume $0\leq t \leq 2r+1$.  As in the previous case, our choice of $R$ and $r$ ensures that $\bar{\gamma}\vert_{[2r+1,\infty)}$ does not intersect $\cal N_\nu(\rho_T^W)$, which implies $\diam(\rho_W^T(\bar{\gamma}\vert_{[2r+1,\infty)}))\leq E$ by Lemma \ref{lem:BGI_quasi-geodesic}. Since $\rho_W^T \circ \pi_T = \pi_W$ for each $W \in \mathfrak T - \{T\}$, the distance formula for an HHS implies there is $A= A(E) \geq 0$ so that  set of possible domains where $\diam(\rho_W^T(\bar{\gamma}([0,2r+1])) \geq A$ is finite. Hence there is some bound $D \geq 0$ (depending on $p$ and $b$) on $\diam(\rho_W^T(\bar{\gamma}))$.  As in the previous case, this implies $$\diam(\rho_W^T(\pi_T(b)) \cup \partial \rho_W^T(\partial f(p_S))) \leq D +\nu.$$
	
	Taking $C' = \max\{D+\nu, 2E + cE + \nu\} $, the above shows that $\pi_T(b)$ and $\partial f(p_S)$ are $C'$-bounded. Hence, by Theorem \ref{thm:cobounded=Morse}, there exists $q_p \in \partial_\ast H$ so that $\partial \pi_T(q_p) = \partial f( \partial_S(p))$.
	
	We now show that the map  $h\colon \partial_\ast G \to \partial_\ast H$ defined by $h(p) = q_p$ is a topological embedding. By construction $ \partial \pi_T \circ h= \partial f \circ \partial \pi_S$. Since $\partial f$, $\partial \pi_S$, and $\partial \pi_T$ are all topological embeddings, $h$ must also be a topological embedding.
	
	\textbf{Proof of \eqref{item:2-stable}:} Let $p,q \in \partial \C S$ be $C$-cobounded. By Theorem \ref{thm:cobounded=Morse}, there exist $p',q' \in \partial_\ast G$ so that $\partial \pi_S(p') = p$ and $\partial \pi_S(q') = q$. We will show $f(p)$ and $f(q)$ are $C'$-cobounded for some $C'$ depending on $\lambda$, $\epsilon$, $C$, and $E$. 
 
 By Theorem \ref{thm:cobounded=Morse}, there exists a Morse gauge $M = M(C,E)$ so that $p'$ and $q'$ are connected by a bi-infinite $M$-Morse geodesic in $G$. Let $\gamma \colon (-\infty, \infty) \to G$ be such an $M$-Morse geodesic and let $\bar{\gamma} = f\circ \pi_S \circ \gamma$. Let $W \in \mathfrak T - \{T\}$. By Theorem \ref{thm:ABD_Morse}, $\pi_S \circ \gamma$ is a $(\kappa,\kappa)$-quasi-geodesic in $\C S$ for $\kappa = \kappa(M,E)$.  Lemma \ref{lem:BGI_quasi-geodesic}  says there is $\nu = \nu(\lambda, \epsilon,\kappa, E)$ so that if $\cal N_\nu(\rho_T^W) \cap \bar{\gamma} = \emptyset$, then  $\diam(\rho_W^T( \bar{\gamma})) \leq E$. Hence, we can assume there is $t \in \mathbb{Z}$ so that $\bar{\gamma}(t) \in \cal N_\nu(\rho_T^W)$.  Let $R = \lambda(2\nu + \epsilon + 1)$ and let $r, c \in \mathbb{N}$ be as in Lemma \ref{lem:QI_preserve_coboundedness} for this choice of $R$ and our given $C$. Thus, there exists a path $\eta$ in $\C S$ connecting $\pi_S \circ \gamma (t-r)$ and $\pi_S \circ \gamma (t+r)$ so that $\eta$ avoids the $R$-ball around $\pi_S\circ \gamma (t)$ and $\eta$ has length at most $c$. 
 
 By arguing exactly as we did in the $t > 2r+1$ case of the proof of \eqref{item:induce_morse_map}, we have $\diam(\rho_W^T (\eta)) \leq cE$ plus  $$\diam(\rho_W^T(\bar{\gamma}\vert_{(-\infty,r-t]})) \leq E \text{ and } \diam(\rho_W^T(\bar{\gamma}\vert_{[r+t,\infty)}))\leq E.$$ Continuing as in the proof of \eqref{item:induce_morse_map}, this will imply $\partial f(p)$ and $\partial f(q)$ are $ (2E + cE + 2\nu)$-cobounded.
	
	The fact that the homeomorphism $h$ from \eqref{item:induce_morse_map} is 2-stable now follows due to the  correspondence in Theorem \ref{thm:cobounded=Morse} between being cobounded and being joined by a Morse geodesic.
	
	\textbf{Proof of \eqref{item:quasi-mobius}:}  For distinct $a,b,c,d \in \partial \C S$, let $[a,b,c,d]_S$ denote the cross-ratio in $\partial \C S$. By Lemma \ref{lem:hyp_qi_cross_ration}, $f$  being a quasi-isometric embedding implies $\partial f$ is quasi-m\"obius with respect to the cross ratio on $\partial \C S$. Proposition 7 of \cite{mousleyrussell:quasi-mobius} shows that for each Morse gauge $M$ and each tuple $(a,b,c,d) \in \partial_\ast^{(4,M)} G$, we have \[ [a,b,c,d]_M \asymp [\partial \pi_S(a), \partial \pi_S(b), \partial \pi_S(c), \partial \pi_S(d)]_S\] with constants determined by $E$ and $M$. Hence the map $h$ from \eqref{item:induce_morse_map} is quasi-m\"obius. 
\end{proof}

Our main result (Theorem \ref{thm:one-ende_converse}) is now a corollary of Theorem \ref{thm:qi_curve_graph_implies_qi_spaces}.

\begin{corollary}
	If $\C S$ is one-ended and there exists a $(\lambda,\epsilon)$-quasi-isometry $f \colon \C S \to \C T$, then there exists a quasi-isometry $F \colon G \to H$ so that for all $x \in G$, $\pi_T \circ F(x)$ is uniformly close to $f \circ \pi_S(x)$.
\end{corollary}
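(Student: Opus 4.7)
The plan is to promote $f$ to a quasi-m\"obius homeomorphism of Morse boundaries, produce $F$ via the machinery of Charney, Cordes, and Murray, and then verify the projection compatibility using a center-of-triangle argument in $\C T$.

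First, choose a quasi-inverse $\bar f \colon \C T \to \C S$ of $f$ and apply Theorem \ref{thm:qi_curve_graph_implies_qi_spaces} to both $f$ and $\bar f$, obtaining quasi-m\"obius topological embeddings $h \colon \partial_\ast G \to \partial_\ast H$ and $\bar h \colon \partial_\ast H \to \partial_\ast G$ satisfying $\partial \pi_T \circ h = \partial f \circ \partial \pi_S$ and $\partial \pi_S \circ \bar h = \partial \bar f \circ \partial \pi_T$. Combining these with the functorial identities $\partial \bar f \circ \partial f = \mathrm{id}_{\partial \C S}$ and $\partial f \circ \partial \bar f = \mathrm{id}_{\partial \C T}$, together with the injectivity of $\partial \pi_S$ and $\partial \pi_T$, shows $\bar h \circ h = \mathrm{id}_{\partial_\ast G}$ and $h \circ \bar h = \mathrm{id}_{\partial_\ast H}$. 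Hence $h$ is a quasi-m\"obius homeomorphism with quasi-m\"obius inverse, and Theorem \ref{thm:quasi-mobius = quasi-isometry}(2) yields a quasi-isometry $F \colon G \to H$ with $\partial F = h$.

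Second, verify the coarse agreement of $\pi_T \circ F$ and $f \circ \pi_S$. Fix $x \in G$ and choose a triple $(p,q,r) \in \partial_\ast^{(3,M)} G$, for a uniform Morse gauge $M$, so that $x$ lies within uniformly bounded distance of a $K_M$-center for $(p,q,r)$; such a triple exists by visibility of $\partial_\ast G$ together with the abundance of Morse directions supplied by unbounded products. By Theorem \ref{thm:ABD_Morse}, $\pi_S(x)$ is uniformly close to a hyperbolic center in $\C S$ for $(\partial \pi_S(p), \partial \pi_S(q), \partial \pi_S(r))$. Since $f$ is a quasi-isometry of hyperbolic spaces, it carries hyperbolic centers to hyperbolic centers up to bounded error, so $f \circ \pi_S(x)$ is uniformly close to a center in $\C T$ for $(\partial f \partial \pi_S(p), \partial f \partial \pi_S(q), \partial f \partial \pi_S(r)) = (\partial \pi_T h(p), \partial \pi_T h(q), \partial \pi_T h(r))$. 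On the other hand, since $F$ is a quasi-isometry with $\partial F = h$, the point $F(x)$ lies close to a $K_{M'}$-center of $(h(p), h(q), h(r))$ in $H$, whence $\pi_T \circ F(x)$ is uniformly close to a hyperbolic center in $\C T$ for the same boundary triple. Uniform boundedness of $K_\delta$-centers in a hyperbolic space then forces $\pi_T \circ F(x)$ and $f \circ \pi_S(x)$ to lie within a uniform distance.

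The main obstacle I expect is justifying the existence of a triple $(p,q,r)$ for which $x$ is a uniformly controlled center. This is precisely where visibility of the Morse boundary combines with the richness of $\partial_\ast G$ supplied by unbounded products and the one-endedness of $\C S$; one can build such a triple by first taking a bi-infinite $M$-Morse geodesic through $x$ to furnish $p,q$ and then selecting a transverse Morse ray at $x$ to furnish $r$. Once this is in place, the compatibility collapses to the standard fact that $K_\delta$-centers of boundary triples in a hyperbolic space are uniformly cobounded.
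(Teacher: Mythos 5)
Your proposal is correct and follows essentially the same route as the paper: apply Theorem \ref{thm:qi_curve_graph_implies_qi_spaces} to $f$ and a quasi-inverse to get a quasi-m\"obius homeomorphism $h$ with quasi-m\"obius inverse, invoke Theorem \ref{thm:quasi-mobius = quasi-isometry} to produce $F$, and then verify the coarse agreement by realizing each $x\in G$ as a uniform center of a triple in $\partial_\ast^{(3,M)}G$ (the paper gets the needed uniformity and the hypothesis that $\partial_\ast G$ has at least $3$ points from the cobounded acylindrical action of $G$ on the one-ended space $\C S$ and cocompactness on $X$, which is the cleanest way to settle the ``main obstacle'' you flag).
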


\begin{proof}
	Theorem 14.3 of \cite{behrstockhagensisto:hierarchically:1} says $G$ acts cobounded and acylindrically on  $\C S$. Thus, $\C S$ being one-ended  implies that $\partial_\ast G$ contains at least 3 points.
	
	Let $f^{-1} \colon \C T \to \C S$ be a quasi-inverse for $f$. Applying Theorem \ref{thm:qi_curve_graph_implies_qi_spaces} to both $f$ and $f^{-1}$, we produce a quasi-m\"obius homeomorphism $h \colon \partial_\ast G \to \partial_\ast H$ with quasi-m\"obius inverse so that $\partial f \circ \partial \pi_S = \partial \pi_T \circ h$. By Theorem \ref{thm:quasi-mobius = quasi-isometry} there is a quasi-isometry $F \colon G \to H$ so that $\partial F = h$. 
 
    Because $G$ acts cocompactly on $X$, there is a Morse gauge $M$  and a constant $K_M \geq0$ so that for all $x \in G$ there is a triple $(p,q,z)\in \partial^{(3,M)}_\ast G$ so that $x$ is a $K_M$-center for $(p,q,z)$. Now $(\partial F(p),\partial F(q), \partial F(z)) \in \partial_\ast^{(3,M')} H$ for some $M'$ determined by $M$, $\lambda$, $\epsilon$, and $E$. Moreover, $F(x)$ is uniformly close to all three sides of any ideal $M'$-Morse triangle with vertices $\partial F(p), \partial F(q), \partial F(z)$. Since $\partial f \circ \partial \pi_S = \partial \pi_T \circ \partial F$, this implies $\pi_T \circ F(x)$ is uniformly close to $f \circ \pi_S(x)$. 
\end{proof}

\begin{remark}
    There is a proof of Theorem \ref{thm:one-ende_converse} that does not rely on the Morse boundary and quasi-M\"obius maps, but instead directly invokes the quasi-isometry on the $\nest$-maximal hyperbolic space $\C S$.  This argument uses the fact that quasi-isometries of $\C S$ preserve the set of cobounded pairs in $\partial \C S$ to show that quasi-isometries of $\C S$ produce ``quasi-M\"obius'' maps on the set of cobounded tuples in $\partial \C S$. Thus, one can essentially repeat the  arguments used in  \cite{charneycordesmurray:quasi} to build a quasi-isometry of $G$  with coboundedness and the distance formula replacing the role of Morse geodesics. As this approach would result in a lengthier proof without any fundamentally new ideas,  we have elected to give the shorter proof using the established results from the literature.
\end{remark}

%\bibliography{main}
 %\bibliographystyle{alpha}
% \end{document}

\bibliography{main}
\bibliographystyle{alpha}

\end{document}